\documentclass[intlimits]{amsart}

\usepackage[all,hyperref,numberbysection]{bi-discrete}

\usepackage{tikz}
\usetikzlibrary{positioning,arrows.meta,calc,decorations.markings}
\providecommand{\calA}{\mathcal A}
\providecommand{\calN}{\mathcal N}

\usepackage[backend=biber,maxbibnames=5,maxalphanames=5,style=alphabetic,bibencoding=utf8,giveninits,url=false,isbn=false]{biblatex}
\AtBeginBibliography{\small}
\usepackage{tikz}
\usepackage{esint}
\renewcommand{\dashint}{\fint}
\newcommand{\px}{{p(\cdot)}}
\newcommand{\pdx}{{p'(\cdot)}}
\newtheorem*{question*}{Question}
\newcommand{\dominatedby}{\ensuremath{\preceq}}
\allowdisplaybreaks

\begin{document}

\title{Variable Exponent Regularity via Muckenhoupt Condition}

\author{Daviti Adamadze}
\address{Fakult\"{a}t f\"{u}r Mathematik, Universit\"{a}t Bielefeld, Bielefeld 33615, Germany}
\email{dadamadz@math.uni-bielefeld.de}

\author{Anna Kh. Balci}
\address{Fakult\"{a}t f\"{u}r Mathematik, Universit\"{a}t Bielefeld, Bielefeld 33615, Germany}
\email{anngremlin@gmail.com}

\author{Lars Diening}
\address{Fakult\"{a}t f\"{u}r Mathematik, Universit\"{a}t Bielefeld, Bielefeld 33615, Germany}
\email{lars.diening@uni-bielefeld.de}

\subjclass[2020]{Primary 35J60; Secondary 35B45, 35B65}

\keywords{Variable exponents, Muckenhoupt condition, $p(x)$-Laplacian, regularity, local boundedness}
\thanks{Daviti Adamadze gratefully acknowledges financial support from the Deutsche Forschungsgemeinschaft (DFG, German Research Foundation) through IRTG 2235 (Project No. 282638148). Anna Kh. Balci and Lars Diening gratefully acknowledge financial support from the Deutsche Forschungsgemeinschaft (DFG, German Research Foundation) through SFB 1283/2 2021 (Project No. 317210226).
}

\begin{abstract}
  For the first time, we establish higher integrability of the gradient and local $L^\infty$ estimates of weak solutions to the $p(x)$-Laplacian without assuming $\log$-H\"older continuity of~$p$.  Instead, we establish these results for exponents satisfying a generalized Muckenhoupt condition. This admits discontinuous exponents acting as pointwise multipliers of the BMO class. Our framework bridges the gap between classical $p(x)$-regularity and weighted Muckenhoupt theory, providing a new foundation for the analysis of differential equations with highly irregular non-standard growth.
\end{abstract}

\maketitle

\tableofcontents

\section{Introduction}
\label{sec:introduction}

In this paper we consider two problems of the $p(x)$-Laplacian, one with non-zero right-hand side
\begin{alignat}{2}
  \label{eq:px-laplacian_non_zero}
	-\divergence A(\cdot,\nabla u) &= -\divergence (\abs{G}^{{\px-2}}G) &\qquad &\text{in} \ \, \Omega,
  \\
  \intertext{and the other one with zero right-hand side}
  \label{eq:px-laplacian_zero}
  -\divergence A(\cdot,\nabla u) &= 0 &\qquad &\text{in} \ \, \Omega,
\end{alignat}
where $A(x,\nabla u) = \abs{\nabla u}^{p(x)-2} \nabla u$ and $\Omega \subset \RRn$ is some domain. Our goal is to establish higher integrability of~$\nabla u$ for solutions of~\eqref{eq:px-laplacian_non_zero} and local $L^\infty$-estimates for~$u$ for solutions of~\eqref{eq:px-laplacian_zero} under very mild conditions on the variable exponent~$p$, weaker than those known before. In particular, we will use a Muckenhoupt-type condition rather than the well-known $\log$-H\"older continuity condition.
Note that solutions to~\eqref{eq:px-laplacian_zero} are (local) minimizers of the energy
\begin{align}\label{eq:variable_energy}
  \mathcal{F}(v) = \int\nolimits_{\Omega} \frac{1}{p(x)} |\nabla v|^{p(x)} \, dx.
\end{align}

In the study of variational problems and differential equations with non-standard $p(x)$-growth, the regularity theory has historically been closely tied to log-H\"older continuity of the exponent, see~\eqref{eq:log-hoelder}. 
Due to the delicate nature of non-standard growth, the log-H\"older continuity has generally been considered to be sharp. To highlight the severity of this restriction, Acerbi and Mingione \cite[p.~123]{AcMi1} emphasized that "in general, dropping it causes the loss of any type of regularity of minimizers, like H\"older continuity and even higher integrability." The goal of our article is to drop the widely accepted log-H\"older continuity condition and move to a weaker Muckenhoupt-type condition.

The study of problems with non-standard growth was pioneered by Marcellini, who established the first regularity results for so-called $(p,q)$-growth problems \cite{Mar}, and by Zhikov, who investigated variational problems with $p(x)$-growth and the Lavrentiev phenomenon \cite{Zhik5}. Shortly thereafter, the H\"older continuity of the solutions was independently established in \cite{Piatcoscia_Hoelder_Continuity,Alkhutov_Harnack_Holder_px,Fanzhao_De_Giorgi_classes}. Chiad\`o Piat and Coscia~\cite{Piatcoscia_Hoelder_Continuity} proved it under the assumption that $\px \in W^{1,s}(\Omega)$ for $s>n$, while Alkhutov~\cite{Alkhutov_Harnack_Holder_px} and Fan and Zhao~\cite{Fanzhao_De_Giorgi_classes} utilized the weaker log-H\"older continuity condition.

Employing the same condition, Fan and Zhao developed De Giorgi-type methods to study quasiminimizers in the variable exponent setting \cite{Fanzhao_De_Giorgi_classes,Fanzhao_discont_exponent}. Furthermore, Acerbi and Mingione obtained higher-order regularity results for minimizers with $p(x)$-growth \cite{AcMi1}. Subsequently, these types of results were extended to a slightly more general context in~\cite{Adamowicz_Toivanen_Hoelder_nonstandard}. Calder\'on--Zygmund type regularity results were obtained for solutions to equation \eqref{eq:px-laplacian_non_zero} locally in \cite{AcerbiMingione2005} and globally in \cite{DieSch14}. Regarding the Lavrentiev phenomenon, a systematic approach was developed for the variable exponent and even more general models in \cite{Zhik5,ELM,BDS,BalciDieningSurnachev2025}.

In this paper we develop new techniques that allow us to push the results beyond the scope of log-H\"older continuous exponents. In particular, we will work in a framework similar to that for Muckenhoupt weights. To understand the meaning of this, let us spend a few words on the weighted $p$-Laplacian (and the weighted Laplacian for $p=2$) given by
\begin{align}
  \label{eq:weighted-p-laplacian}
  -\divergence \big( \mu(x) \abs{\nabla u}^{p-2} \nabla u\big) &= 0
\end{align}
as well as the double phase model
\begin{align}
  \label{eq:double-phase}
  -\divergence \big( \abs{\nabla u}^{p-2} \nabla u + \mu(x) \abs{\nabla u}^{q-2} \nabla u\big) &= 0
\end{align}
with $1 < p < q < \infty$.  The corresponding energies are $\int_\Omega \phi(x,\abs{\nabla u(x)})\,dx$ with $\phi(x,t) = \frac 1p \mu(x) t^p$ and $\phi(x,t) = \frac 1p t^p + \frac 1q \mu(x) t^q$, respectively. In the case of the $p(x)$-Laplacian we have $\phi(x,t) = \frac{1}{p(x)} t^{p(x)}$.
Solutions to the weighted Laplacian (i.e., $p=2$) with $\mu \in A_2$ have been investigated in the famous paper~\cite{FabesKenigSerapioni1982}. The case $p \in (1,\infty)$ and $\mu \in A_p$ has been studied in~\cite{CruzUribeMoenNaibo2013}. In both papers the local H\"older regularity of solutions has been shown.

The generalization of the Muckenhoupt condition to the $p(x)$-Laplacian or the double phase model is not straightforward. Over the last two decades it turned out that the most promising and natural approach is to define the Muckenhoupt condition in terms of
\begin{align}
  \label{eq:general-muckenhoupt}
  [\phi]_{\mathcal{A}} \coloneqq \sup_Q \frac{\norm{\indicator_Q}_{\phi}
  \norm{\indicator_Q}_{\phi^*}}{\abs{Q}} < \infty,
\end{align}
where $\norm{\cdot}_\phi$ is the norm associated with the energy of the model, $\norm{\cdot}_{\phi^*}$ is the norm of the associated dual space, and the supremum is taken over all cubes, see~\eqref{eq:pinA}. This condition agrees with the classical Muckenhoupt condition, when applied to $\phi(x,t) = \frac 1p \mu(x) t^p$. Moreover, every $\log$-H\"older continuous exponent satisfies~\eqref{eq:general-muckenhoupt}. The class of variable exponents satisfying~\eqref{eq:general-muckenhoupt} is, however, much larger, see~\eqref{eq:example-lerner}.

The Muckenhoupt condition~\eqref{eq:general-muckenhoupt} has been used in~\cite{adamadzedieningkopalianiok} to show H\"older continuity of the solutions to the double phase model. The goal of our paper is to extend the results (partially) to the $p(x)$-Laplacian. This is part of the long-term goal to provide a unifying theory that includes the weighted $p$-Laplacian, the double phase model, the $p(x)$-Laplacian as well as more general models. 
In Figure \ref{fig:boundedness-conditions} we summarize the development so far.

\begin{figure}[ht]
	\centering
	\begin{tikzpicture}[
		scale=0.75, transform shape,
		every node/.style={align=center},
		box/.style={
			draw, rounded corners, inner sep=5pt,
			minimum width=3.4cm, font=\small
		},
		arrow/.style={-{Latex[length=2mm]}, thick},
		equiv/.style={<->, >=Latex, thick},
		double slash/.style={
			-{Latex[length=2mm]}, thick,
			postaction={decorate},
			decoration={
				markings,
				mark=at position 0.5 with {
					\draw[-, thick, solid, line cap=round] (-2pt, -3pt) -- (0pt, 3pt);
					\draw[-, thick, solid, line cap=round] (1pt, -3pt) -- (3pt, 3pt);
				}
			}
		}
		]

		\node[box] (M) at (0, 0) {\textbf{Boundedness of $M$}};
		
		\node[box] (DP) at (0, 2.5) {\textbf{Double phase}\\$\varphi \in \calA$\\ \cite{adamadzedieningkopalianiok}};

		\node[box] (px) at (-4.5, 2.5) {\textbf{Variable exponents}\\$p \in \calA \cap \calN$\\\cite{Lerner2010questions,CUF,AdamadzeDieningKopaliani2026}};
				
		\node[box] (G) at (4.5, 2.5) {\textbf{General Model}\\$\phi \in \calA + \text{???}$
      \\
    open problem};
		\draw[arrow] (DP.south) -- ([xshift=0cm]M.north);

		\draw[arrow] (px.south) -- ([xshift=-1.5cm]M.north);
		\draw[arrow] (G.south) --  ([xshift=1.5cm]M.north);
		
	\end{tikzpicture}
	\caption{Towards a general condition for the boundedness of $M$.}
	\label{fig:boundedness-conditions}
\end{figure}

Let us mention here that the analysis of the double phase model subject to the condition~\eqref{eq:general-muckenhoupt} in~\cite{adamadzedieningkopalianiok} is quite difficult, but still much simpler than the one for variable exponents. This can be seen by comparing the generalized Jensen's inequalities of the two models. In particular, for the double phase model, a generalized Jensen's inequality similar to that in Theorem~\ref{thm:generalized-jensen} holds (see~\cite[Theorem 2.5]{adamadzedieningkopalianiok}), but without the need for the extra~$\abs{Q}^\epsilon$ term.

A different approach to a unifying theory was developed by Harjulehto, H\"ast\"o and Ok. In~\cite{HastoHarjulehto2019}, Harjulehto and H\"ast\"o extend the theory on variable exponent spaces of~\cite{DHHR} to a larger class of generalized Orlicz spaces. In~\cite{HastoOk2022} they use this technique to prove regularity results for minimizers of these more general models. The conditions that they introduce are immediate generalizations of the $\log$-H\"older continuity of~$p(\cdot)$ to more general models. This is a very interesting approach, but it excludes the case of the weighted Laplacian and weighted $p$-Laplacian with Muckenhoupt weights. It is the purpose of our paper to merge this approach with the world of Muckenhoupt weights. Our paper~\cite{adamadzedieningkopalianiok} is a first step in this direction restricted to the double phase model.

Our technique used to generalize the Muckenhoupt class to other models has its origins in the work~\cite{Diening2005}, slightly polished in~\cite{DHHR}. In this work, a general theory for the Musielak--Orlicz setting was initiated, analogous to the Muckenhoupt theory for weighted Lebesgue spaces. Of particular interest is the left-openness property of the class $\mathcal{A}_{\text{global}}$ (defined precisely below), which generalizes the corresponding left-openness of the classical Muckenhoupt class $A_p$. This deep result is essential for the fine analysis of differential equations modeled on the $p(x)$-Laplacian. Note that~\cite{Diening2005} provides a characterization of the boundedness of the maximal operator in the context of variable exponent spaces. However, the general setting of Musielak--Orlicz spaces is not yet complete.

Let us also compare our results to other techniques that allow one to treat the $p(x)$-Laplacian without the $\log$-H\"older condition or a Muckenhoupt condition. One approach is to work within the context of $p$-$q$ growth problems intensively investigated by Marcellini, e.g.~\cite{Mar}. The paper~\cite{hirsch_schaeffner_Growth_conditions_and_regularity} provides the currently strongest result in this direction. Translated to our setting it says that if
\begin{align}
  \label{eq:hirsch-schaeffner}
  \frac{1}{\sup p} &\geq \frac{1}{\inf p} - \frac{1}{n-1},
\end{align}
then solutions are locally bounded. This is a nice result, but it excludes some good exponents. In particular, any $\log$-H\"older continuous exponent provides local boundedness of the solution without the need of~\eqref{eq:hirsch-schaeffner}, see~\cite{Alkhutov_Harnack_Holder_px,Fanzhao_De_Giorgi_classes}. Moreover, the $p$-$q$-growth approach has its own difficulties, when aiming for higher integrability of the gradients or H\"older continuity of solutions.

Another possibility to allow for more general exponents is to use continuous~$p$ and to apply the $p$-$q$-growth theory only locally, see for example~\cite{Fanzhao_discont_exponent}.
However, this approach only allows us to obtain
 $L^\infty$-estimates on very small balls whose diameter is coupled to the modulus of continuity of the exponent~$p$. When passing back to medium-sized balls, it is not anymore possible to control the constants in a good way. We believe that the
  approach using a Muckenhoupt condition as in~\eqref{eq:general-muckenhoupt} is the most natural one to get a solid theory on the local boundedness of solutions. In
   particular, we expect that the results of our paper will open the road to local H\"older continuity. Moreover, our technique immediately allows us to get higher integrability of the gradients.

Overall, we believe that the technique in this paper using the Muckenhoupt condition~\eqref{eq:general-muckenhoupt} is a big step towards a unified theory for models based on generalized Orlicz spaces. The final goal in this context is to prove H\"older continuity of the solutions as well as higher integrability of the gradients.

The paper is organized as follows. In Section~\ref{sec:variable-exponents} we introduce the Muckenhoupt class for variable exponents also providing the historical context. Based on this condition we develop an (improved) generalized Jensen's inequality, which is the natural but non-trivial extension of Jensen's inequality to the setting of variable exponents, see Theorem~\ref{thm:generalized-jensen}. In Section~\ref{sec:Sobolev-Poincare} we derive certain Sobolev--\Poincare{} estimates based on the improved generalized Jensen's inequality. Then in Section~\ref{sec:Regularities} we apply these new tools to show higher integrability of the gradient and local boundedness of the solution.

%% ------------------------------------------------------------

\section{On Variable Exponent Spaces}
\label{sec:variable-exponents}

Before we start the regularity results we need to recall well-known properties of variable exponent spaces and also develop a few new ones. We mostly use the notation of~\cite{DHHR}. See~\cite{CUF} for another standard reference.

Although our subsequent analysis will primarily focus on domains~$\Omega \subset \RRn$, it is advantageous at this stage to work on the entire space~$\RRn$. Consequently, we assume that the variable exponent~$p$ is defined on all of~$\RRn$. We make this global assumption because general extension theorems for the class~$\mathcal{A}$ (which will be formally defined later) from a domain to the whole space are currently unavailable.

In the following we assume that $p \,:\, \RRn \to [1,\infty]$ with $p^-, p^+ \in [1,\infty]$ such that 
\begin{align*}
	p^- \le p(x) \le p^+ \qquad \text{for all $x \in \RRn$}.
\end{align*}
Note that throughout the PDE part of this paper we will always assume that
\begin{align}
	\label{eq:p-uniform-convex}
	1 < p^- \leq p^+ < \infty.
\end{align}

The variable exponent space $L^\px(\Omega)$ is defined as follows:
\begin{align*}
  L^{p(\cdot)}(\Omega) &\coloneqq \bigset{ f \in L^1_{\loc}(\Omega)\,:\, \norm{f}_\px < \infty},
\end{align*}
where the norm $\norm{\cdot}_\px$ is defined by
\begin{align*}
  \norm{f}_{\px} &\coloneqq \inf \biggset{ \lambda>0 : \int\nolimits_\Omega \left| \frac{f(x)}{\lambda} \right|^{p(x)} dx  \leq 1}
\end{align*}
with the convention that $t^\infty := 0$ for $t \in [0, 1]$ and $t^\infty := \infty$ for $t > 1$.

For $1<p^- \leq p^+ <\infty$ the space $L^\px(\Omega)$ is a uniformly convex Banach space~\cite[Theorem~3.4.9]{DHHR}. The generalized Orlicz-Sobolev space is $$W^{1,\px}(\Omega) := \{u \in L^\px(\Omega) : D_ju \in L^\px(\Omega) \text{ for } j=1,\dots,n\},$$ equipped with the norm $\|u\|_{W^{1,\px}(\Omega)} := \|u\|_{L^\px(\Omega)} + \|Du\|_{L^\px(\Omega)}$. We define $W_0^{1,\px}(\Omega) := W^{1,1}_0(\Omega) \cap W^{1,\px}(\Omega)$ under the same norm. On the other hand, one can show that $W_0^{1,\px}(\Omega) \coloneq \{ u \in W^{1,\px}(\Omega) : \tilde{u} \in W^{1,\px}(\mathbb{R}^n) \}$ for domains with a sufficiently regular boundary (e.g. Lipschitz boundary), where $\tilde{u}$ is the zero-extension of $u$ to $\mathbb{R}^n \setminus \Omega$. Before proceeding, let us fix some standard notation. For a measurable set~$E \subset \RRn$, we denote its $n$-dimensional Lebesgue measure by~$\abs{E}$ and its indicator function by~$\indicator_E$. If~$E \subset \RRn$ is a set of finite positive measure, the integral average is defined as	$\dashint\nolimits_E f(x) \, dx \coloneqq \frac{1}{\abs{E}} \int_E f(x) \, dx.$
Furthermore, for a variable exponent~$p$, its conjugate exponent~$p'$ is defined pointwise by the relation $ \frac{1}{p(x)} + \frac{1}{p'(x)} = 1,$ with the standard convention that $1/\infty = 0$. When we write $a \lesssim b$, we mean that there is a constant $c$ depending at most on the dimension $n$ and the structural constants $p^-$, $p^+$, $[p]_{\mathcal{A}}$, and $[p]_{\mathcal{N}}$ such that $a \leq c b$. Furthermore, we write $a \eqsim b$ if $a \lesssim b$ and $b \lesssim a$. We have the following H\"older inequality, see \cite[Lemma 2.6.5]{DHHR} 
\begin{align}
	\label{eq:holder}
	\int\nolimits_{\Omega} \abs{f(x) g(x)} \, dx \le 2 \norm{f}_{L^{p(\cdot)}(\Omega)} \norm{g}_{L^{p'(\cdot)}(\Omega)}.
\end{align}
For $f \in L^1_{\loc}(\RRn)$, the maximal operator~$Mf \colon \RRn \to [0,\infty]$ is defined by
\begin{align}\label{eq:max}
  (Mf)(x) &\coloneqq \sup_{Q \ni x} \dashint\nolimits_Q \abs{f(y)}\,dy,
\end{align}
where the supremum is taken over all cubes (balls) $Q \subset \RRn$. It is also possible to take the supremum over all balls centered at~$x$, which gives an operator that is equivalent to \eqref{eq:max} up to a constant. 

Let us define the logarithmic modulus of continuity by
\begin{align*}
  \omega_{\log}(t) &\coloneqq \frac{1}{\log(e+ \frac 1{t})}.
\end{align*}
We say that $\frac 1p$ is $\log$-H\"older continuous, in short $\frac 1p \in C^{\log}(\RRn)$, if
\begin{align}
  \label{eq:log-hoelder}
  \Bigabs{\frac{1}{p(x)} - \frac{1}{p(y)}} &\leq c_{\log}(1/p)\, \omega_{\log}(\abs{x-y}) \qquad \text{for all $x,y \in \RRn$}
\end{align}
for some constant $c_{\log}(1/p)\geq 0$.  Moreover, we say that $\frac 1p$ satisfies the $\log$-decay condition if there exists $p_\infty \in [p^-,p^+]$ with
\begin{align}
  \label{eq:decay}
  \Bigabs{\frac{1}{p(x)} - \frac{1}{p_\infty}} &\leq c_{\log}(1/p)\, \frac{1}{\log(e+ \abs{x})} \qquad \text{for all $x \in \RRn$}.
\end{align}
The set of exponents~$p$ such that $\frac 1p$ satisfies the $\log$-H\"older continuity condition and the decay condition is denoted in~\cite{DHHR} by $\mathcal{P}^{\log}(\RRn)$. It is by now well known that if $p \in \mathcal{P}^{\log}(\RRn)$ and $1<p^- \leq p^+ \leq \infty$, then $M$ is bounded on $L^{\px}(\RRn)$, see \cite[Theorem~4.3.8]{DHHR}. The first result regarding the boundedness of the maximal function using the log-H\"older continuity of the exponent was obtained in \cite{Die2}. The decay condition can be replaced by a weaker alternative, introduced by Nekvinda~\cite{Nekvinda2004}, namely
\begin{align}
	\label{eq:nekvinda}
	1 \in L^{r(\cdot)}(\RRn),
\end{align}
where $\frac1{r(x)} \coloneqq \abs{\frac 1{p(x)} - \frac 1{p_\infty}}$ for some~$p_\infty \in[p^-,p^+]$. A variable exponent~$p$ satisfying~\eqref{eq:nekvinda} is said to satisfy the Nekvinda condition, which we denote by~$p \in \mathcal{N}$. We define $[p]_{\mathcal{N}} \coloneqq \norm{1}_{r(\cdot)}$.  
Nevertheless, the combined conditions~$\frac 1p \in C^{\log}(\RRn)$ and $p \in \mathcal{N}$ are not necessary for the boundedness of~$M$. It turns out that the boundedness of~$M$ is equivalent to a weaker condition of Muckenhoupt-type, called class~$\mathcal{A}$ in~\cite[Definition~4.4.6]{DHHR} but here we will call it $\mathcal{A}_{\text{global}}$. This condition basically states that the averaging operator over families of disjoint cubes has to be bounded. 
A lot of good properties can be derived from this condition, like left-openness and boundedness of~$M$ on $L^{s\px}(\RRn)$ for some~$s>1$ and on $L^\pdx(\RRn)$, see again \cite[Theorem~5.7.2]{DHHR}.
We denote the operator norm of the averaging operator by $[p]_{\mathcal{A}_{\text{global}}}$.
Note that the condition $\mathcal{A}_{\text{global}}$ is necessary and sufficient for the boundedness of~$M$ on~$L^\px(\RRn)$ if~$1< p^- \leq p^+ < \infty$.

A slightly modified version of Lerner's example from~\cite{Lerner05} (see also~\cite[Example~5.1.8]{DHHR}) is given by
\begin{equation}
	\label{eq:example-lerner}
	p(x) = 2 - \epsilon\,\big(1+\sin(\log\log(e+\abs{x}+1/\abs{x}))\big)
\end{equation}
for some small~$\epsilon>0$. This variable exponent is of class~$\mathcal{A}_{\text{global}}$, but it is neither continuous nor satisfies the decay condition. On the other hand, it does not satisfy condition~$\mathcal{N}$ either. Note that this exponent~$p$ as well as the example below in~\eqref{eq:exa1} are pointwise multipliers on $\setBMO(\RRn)$, see~\cite{Lerner05}.

However, one of the locally singular examples from Lerner's original paper~\cite{Lerner05} is defined by
\begin{align}
	p(x) = 2 - \epsilon\bigl(1 + g(x)\bigr),
\end{align}
where
\begin{align}
  \label{eq:exa1}
	g(x) =
	\begin{cases}
		\sin\left(\log\log \frac{1}{|x|}\right), & 0 < |x| \le e^{-1}, \\[1mm]
		0, & |x| > e^{-1}.
	\end{cases}
\end{align}
Because this original exponent is constant outside a bounded domain, it belongs to the class~$\mathcal{A} \cap \mathcal{N}$ for sufficiently small $\epsilon$, even though it does not have a limit at zero.

These examples already indicate that the $\log$-H\"older continuity condition might not be necessary for regularity results and a slightly weaker condition could be used.
We say that~$p \in \mathcal{A}$ if and only if
\begin{align}
	\label{eq:pinA}
	[p]_{\mathcal{A}} &\coloneqq \sup_{Q} \frac{\norm{\indicator_Q}_\px \norm{\indicator_Q}_\pdx}{\abs{Q}} < \infty,
\end{align}
where the supremum is taken over all cubes~$Q \subset \RRn$.
This condition is equivalent to the standard Muckenhoupt condition if applied to weighted Lebesgue spaces. For this reason, whenever~$p \in \mathcal{A}$, the variable exponent~$p$ is said to satisfy the Muckenhoupt condition. This condition is strictly weaker than~$\mathcal{A}_{\text{global}}$ if no additional decay assumption is used; see~\cite[Theorem~5.3.4]{DHHR} for a counterexample using bump functions separated by an exponentially growing distance.

Let us define the harmonic averages $p_Q$ by
\begin{align*}
  \frac{1}{p_Q} &\coloneqq \dashint\nolimits_Q \frac{1}{p(x)}\,dx.
\end{align*}
Then $(p')_Q = (p_Q)'$, so we can just write $p'_Q$.

It has been shown in \cite[Theorem~4.5.7]{DHHR} that $p \in \mathcal{A}$ is equivalent to
\begin{align}
  \label{eq:indicatorQ}
  \begin{aligned}
    \norm{\indicator_Q}_{\px} &\eqsim \abs{Q}^{\frac 1{p_Q}},
    \\
    \norm{\indicator_Q}_{\pdx} &\eqsim \abs{Q}^{\frac 1{p'_Q}}
  \end{aligned}
\end{align}
for all cubes~$Q$.

Kopaliani simplified in~\cite{Kopaliani} the verification of condition~$\mathcal{A}_{\text{global}}$ significantly. He showed that if $p$ is constant outside a large ball and $1<p^-\leq p^+ < \infty$, then $p \in \mathcal{A}$ already implies $p \in \mathcal{A}_{\text{global}}$. This was improved later to the fact that $p \in \mathcal{A} \cap \mathcal{N}$ and $1 < p^- \leq p^+ < \infty$ imply $p\in\mathcal{A}_{\text{global}}$ in~\cite{Lerner2010questions} and \cite[Theorem~4.52]{CUF}.  Hence, $p \in \mathcal{A} \cap \mathcal{N}$ and $1<p^-\leq p^+ < \infty$ already guarantee the boundedness of~$M$ on $L^{\px}(\RRn)$. This result was later generalized to unbounded exponents in~\cite{AdamadzeDieningKopaliani2026}.
Thus, one only needs to verify the Muckenhoupt condition $p\in\mathcal{A}$, i.e. \eqref{eq:pinA} or~\eqref{eq:indicatorQ}, and the decay condition~$p \in \mathcal{N}$.  In our regularity analysis for the PDE we will later always assume $p \in \mathcal{A} \cap \mathcal{N}$ and $1< p^- \leq p^+ < \infty$.
It has been shown in \cite[Proposition~4.7.4]{DHHR} that $p \in \mathcal{A}$ implies
\begin{align*}
  \dashint\nolimits_Q \Bigabs{ \frac 1{p(x)} - \frac 1{p_Q}}\,dx &\lesssim \frac{1}{\log(e+ \diameter(Q) + 1/\diameter(Q))}.
\end{align*}
This proves that $p \in \mathcal{A}$ implies $\frac 1p \in \setBMO^{\log}$, where $f \in \setBMO^{\log}$ if for all cubes~$Q$
\begin{align}
	\label{eq:BMOlog-est}z
	\dashint\nolimits_Q \abs{f(x) - \mean{f}_Q}\,dx &\lesssim \omega_{\log}(\diameter(Q)) = \frac{1}{\log(e + \frac{1}{
                                           \diameter(Q)})}.
\end{align}
It was shown in~\cite[Corollary~2]{Spanne65} that $C^{\log}$ is a strict subspace of $\setBMO^{\log}$, which can also be verified using the function $p$ from~\eqref{eq:example-lerner}. Thus,~\eqref{eq:BMOlog-est} does not imply that $p$ is in $C^{\log}$.  In Figure~\ref{fig:px} we summarize the relationships between different conditions on $p$ and the boundedness of the maximal function.

\begin{figure}[ht]
	\centering
	\begin{tikzpicture}[
		scale=0.75, transform shape,
		every node/.style={align=center},
		box/.style={
			draw, rounded corners, inner sep=5pt,
			minimum width=3.4cm, font=\small
		},
		arrow/.style={-{Latex[length=2mm]}, thick},
		equiv/.style={<->, >=Latex, thick},
		double slash/.style={
			-{Latex[length=2mm]}, thick,
			postaction={decorate},
			decoration={
				markings,
				mark=at position 0.5 with {
					\draw[-, thick, solid, line cap=round] (-2pt, -3pt) -- (0pt, 3pt);
					\draw[-, thick, solid, line cap=round] (1pt, -3pt) -- (3pt, 3pt);
				}
			}
		}
		]
		
		% --- Nodes ---
		\node[box] (Plog) {\textbf{Log-H\"older continuity and decay}\\ $p \in \mathcal{P}^{\log}(\mathbb{R}^n)$};
		\node[box, below=1.8cm of Plog] (Clog) { \textbf{Log-H\"older continuity and integral decay}\\ $\frac{1}{p} \in C^{\log} \cap \mathcal{N}$ };
		\node[box, below=1.8cm of Clog] (A) { \textbf{Muckenhoupt}\\ $p \in \mathcal{A}$ };
		\node[box, below=1.8cm of A] (M) { \textbf{Boundedness of $M$} };
		
		\node[box, right=2.5cm of Clog, yshift=-0.9cm] (BMO) { \textbf{Logarithmic mean  oscillation}\\ $\frac{1}{p} \in \mathrm{BMO}^{\log}$ };
		\node[box, right=2.5cm of M] (Aglobal) { \textbf{Global Muckenhoupt}\\ $p \in \mathcal{A}_{\text{global}}$ };
		
		% Plog <-> Clog (vertical parallel with double slash)
		\draw[arrow] ([xshift=+8pt]Plog.south) -- ([xshift=+8pt]Clog.north);
		\draw[double slash] ([xshift=-8pt]Clog.north) -- ([xshift=-8pt]Plog.south);
		
		% Clog <-> A (straight with +N, curved left with double slash)
		\draw[arrow] ([xshift=+8pt]Clog.south) -- ([xshift=+8pt]A.north);
		\draw[double slash] ([xshift=-8pt]A.north) --  ([xshift=-8pt]Clog.south);
		
		% A <-> M (straight with +N, curved left back)
		\draw[arrow] ([xshift=+8pt]A.south) -- node[right, font=\small] {$+\mathcal{N}$} ([xshift=+8pt]M.north);
		\draw[arrow] ([xshift=-8pt]M.north) --  ([xshift=-8pt]A.south);
		
		% Clog -> BMO
		\draw[arrow] (Clog.east) -- (BMO.north west);
		
		% A <-> BMO (parallel with double slash)
		\draw[arrow] ([yshift=-5pt]A.east) -- ([yshift=-5pt]BMO.south west);
		\draw[double slash] ([yshift=+5pt]BMO.south west) -- ([yshift=+5pt]A.east);
		
		% M <-> Aglobal
		\draw[equiv] (M.east) -- (Aglobal.west);
		
	\end{tikzpicture}
	\caption{Relations between the boundedness of $M$ and various conditions on $p$. We assume $1<p^- \leq p^+ < \infty$.} 
	\label{fig:px}
\end{figure}

The boundedness of the maximal operator in~\cite{DHHR}, the regularity estimates for the PDE in~\cite{DieSch14} and the finite element analysis in~\cite{BreDieSch15} rely on the so-called key estimate, see~\cite[Theorem~4.2.4]{DHHR} and the refinement in~\cite{DieningSchwarzacher2013keyestimate}, which requires $p \in \mathcal{P}^{\log}(\RRn)$ though. In particular, for $p \in \mathcal{P}^{\log}(\RRn)$ and $m \in [0,\infty)$ one has the \emph{pointwise key estimate}
\begin{align}
  \label{eq:pointwise-keyestimate}
  \bigg(\dashint\nolimits_Q \abs{f(y)}\,dy\bigg)^{p(x)} \leq c_1 \dashint\nolimits_Q \abs{f(y)}^{p(y)}\,dy + c_1 \abs{Q}^m \qquad \text{for all $x \in Q$}
\end{align}
for all $f$ with $\norm{f}_\px \leq 1$.

It was shown in \cite[Lemma~4.7.3]{DHHR} that the validity of the pointwise key estimate~\eqref{eq:pointwise-keyestimate} for $p$ and $p'$ implies $\frac 1p \in C^{\log}(\RRn)$. Since we want to consider more general exponents, namely $p \in \mathcal{A} \cap \mathcal{N}$, we will weaken this pointwise key estimate and instead use an averaged version. In particular, we will show that for $p \in \mathcal{A} \cap \mathcal{N}$ we have
\begin{align}
  \label{eq:averaged-keyestimate}
  \dashint\nolimits_Q\bigg(\dashint\nolimits_Q \abs{f(y)}\,dy\bigg)^{p(x)}\,dx \leq c_1 \dashint\nolimits_Q \abs{f(y)}^{p(y)}\,dy + c_1 \abs{Q}^\epsilon
\end{align}
for sufficiently small $\epsilon>0$, see Theorem~\ref{thm:generalized-jensen}.
Due to its similarities to Jensen's inequality, we will call it~\emph{generalized Jensen's inequality}. A similar estimate for the double phase model satisfying a Muckenhoupt condition was proven in~\cite[Theorem~2.5]{adamadzedieningkopalianiok}. Before proving the generalized Jensen's inequality, we need a few auxiliary results. 

Let us define the local BMO space by $$\bignorm{\tfrac 1p}_{\setBMO(Q)} := \sup_{Q' \subseteq Q} \dashint\nolimits_{Q'} \Bigabs{ \frac 1{p(x)} - \frac 1{p_{Q'}}}\,dx.$$ 

We have the following lemma.
\begin{lemma}
  \label{lem:Qepsilon}
  Let $p \in \mathcal{A}$ with $1 < p^- \leq p^+ < \infty$. Then there exists $\epsilon=\epsilon(n,p^-,p^+,[p]_\mathcal{A})>0$ such that for all cubes~$Q$ with $\abs{Q}\leq 1$ there holds
  \begin{align}\label{eq:integral_log_holder}
    \dashint\nolimits_{Q} \abs{Q}^{-\epsilon \abs{p(x) - p_Q}}\,dx &\leq 2.
  \end{align}
\end{lemma}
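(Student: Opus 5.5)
The strategy is to combine the $\setBMO^{\log}$ bound for $\frac1p$ (quoted above from \cite[Proposition~4.7.4]{DHHR}) with the John--Nirenberg inequality, which turns the linear logarithmic oscillation bound into an exponential integrability statement.

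\emph{Step 1: reduce to an oscillation of $1/p$.} Since $p^-\le p\le p^+$, we have
\[
\abs{p(x)-p_Q} = p(x)\,p_Q\,\Bigabs{\tfrac1{p(x)}-\tfrac1{p_Q}} \le (p^+)^2\Bigabs{\tfrac1{p(x)}-\tfrac1{p_Q}}.
\]
Writing $\abs{Q}^{-t}=\exp(t\log(1/\abs{Q}))$, and using $\abs{Q}\le 1$ so that $\log(1/\abs{Q})\ge0$, it suffices to show
\[
\dashint_Q \exp\Bigl(\epsilon (p^+)^2 \log\bigl(\tfrac{1}{\abs{Q}}\bigr) \Bigabs{\tfrac1{p(x)}-\tfrac1{p_Q}}\Bigr)\,dx \le 2 .
\]

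\emph{Step 2: local BMO bound with the right scale.} Every subcube $Q'\subseteq Q$ satisfies $\diameter(Q')\le\diameter(Q)\lesssim 1$. By the quoted proposition,
\[
\dashint_{Q'}\Bigabs{\tfrac1p-\tfrac1{p_{Q'}}}\,dx \le \frac{c}{\log(e+1/\diameter(Q'))} \le \frac{c}{\log(e+1/\diameter(Q))}.
\]
This holds because $t\mapsto 1/\log(e+1/t)$ is increasing. Hence
\[
\bignorm{\tfrac1p}_{\setBMO(Q)}\le \frac{c}{\log(e+1/\diameter(Q))} \lesssim \frac{1}{\log(e+1/\abs{Q})},
\]
where the last step uses $\log(1/\diameter(Q)) \eqsim \tfrac1n\log(1/\abs{Q})$ up to additive constants. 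The precise claim needed is
\[
\log\bigl(\tfrac{1}{\abs{Q}}\bigr)\,\bignorm{\tfrac1p}_{\setBMO(Q)}\le K
\]
with $K=K(n,p^-,p^+,[p]_\mathcal{A})$, uniformly over $\abs{Q}\le1$. It holds since $\log(1/\abs{Q})\le n\log(e+1/\diameter(Q))+c(n)$ and the left side is bounded away from $0$.

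\emph{Step 3: John--Nirenberg on $Q$.} The local John--Nirenberg inequality gives constants $c_1,c_2(n)$ such that, for $0<\lambda\,\bignorm{f}_{\setBMO(Q)}\le c_2$,
\[
\dashint_Q \exp\bigl(\lambda\abs{f-\mean{f}_Q}\bigr)\,dx \le 1 + c_1\,\lambda\,\bignorm{f}_{\setBMO(Q)} .
\]
This version, with the $1+O(\lambda\norm{f})$ form, follows from the distribution estimate $\abs{\{\abs{f-\mean f_Q}>s\}}\le c\,\abs{Q}\,e^{-c's/\norm f}$ by integrating $\lambda e^{\lambda s}$ against it. Apply it with $f=1/p$, noting that $\mean{f}_Q=1/p_Q$, and $\lambda=\epsilon(p^+)^2\log(1/\abs{Q})$. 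Then $\lambda\norm{f}_{\setBMO(Q)}\le \epsilon (p^+)^2K$. Choosing $\epsilon$ so small that $\epsilon(p^+)^2K\le c_2$ and $c_1\epsilon(p^+)^2K\le1$ yields the bound $2$.

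The main obstacle is Step 3: getting the sharp ``$1+$small'' form of John--Nirenberg rather than merely some constant. This is handled by the tail integration above. A second point is checking that the BMO constant is uniform over subcubes, which is why the monotonicity in Step 2 is needed. The case $\abs{Q}=1$ is trivial, since then $\lambda=0$.
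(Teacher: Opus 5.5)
Your proposal is correct and follows the paper's own proof. The paper likewise bounds $\abs{p(x)-p_Q}$ by $(p^+)^2\abs{\frac1{p(x)}-\frac1{p_Q}}$, uses the $\setBMO^{\log}$ estimate from \cite[Proposition~4.7.4]{DHHR} to get $\log(1/\abs{Q})\,\bignorm{\tfrac1p}_{\setBMO(Q)}\lesssim 1$, and concludes with John--Nirenberg in the form $\dashint_Q \exp(\gamma\abs{f-\mean{f}_Q}/\norm{f}_{\setBMO(Q)})\,dx\le 1+O(\gamma)$, which it cites from \cite[Corollary~3.1.7]{Grafakos2014modern} where you derive it from the distribution estimate. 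Your explicit checks of uniformity over subcubes and of the $\diameter(Q)$-versus-$\abs{Q}$ conversion fill in details the paper leaves implicit.
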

\begin{proof}
First, note that if $p(x)$ is constant on $Q$ a.e. then \eqref{eq:integral_log_holder} holds trivially. Therefore, we assume that $\norm{\frac 1p}_{\setBMO(Q)} \neq 0.$ Let $\abs{Q} \leq 1$.  It follows from \cite[Proposition~4.7.4]{DHHR} that $p \in \mathcal{A}$ implies
  \begin{align*}
    \bignorm{\tfrac 1p}_{\setBMO(Q)} &\leq c_1\frac{1}{\log(e+1/\abs{Q})},
  \end{align*}
  where~$c_1$ depends only on $[p]_\mathcal{A}$ and dimension $n$. On the other hand, since $\abs{Q} \leq 1$, we have
  \begin{align*}
    \dashint\nolimits_{Q} \abs{Q}^{-\epsilon \abs{p(x) - p_Q}}\,dx
    &= \dashint\nolimits_{Q} \exp\big( \epsilon \abs{p(x) - p_Q} \abs{\log\abs{Q}}\big)\,dx
    \\
    &\leq \dashint\nolimits_{Q}\exp\big( (p^+)^2\epsilon \abs{\tfrac{1}{p_Q} - \tfrac{1}{p(x)}} \abs{\log\abs{Q}}\big)\,dx
    \\
    &\leq \dashint\nolimits_{Q}\exp\bigg( c_1 (p^+)^2\epsilon \frac{\abs{\tfrac{1}{p_Q} - \tfrac{1}{p(x)}}}{ \norm{\frac 1p}_{\setBMO(Q)}}\bigg)\,dx
  \end{align*}
Now, by \cite[Corollary~3.1.7]{Grafakos2014modern} (see also \cite{John_Nirenberg_BMO}), if $\epsilon \leq c_2 = c_2(n,c_1,p^+)$, we obtain
  \begin{align*}
    \dashint\nolimits_{Q}\abs{Q}^{-\epsilon \abs{p(x) - p_Q}}\,dx &\leq 2.
  \end{align*}
\end{proof}
Following the conventions of~\cite{DHHR}, for any $s > 0$, any cube~$Q \subset \RRn$, and any measurable $f$, we define
\begin{align}
	\label{eq:M_sQ}
	M_{s, Q} \varphi(t) &\coloneqq \left( \dashint\nolimits_Q (\varphi(x, t))^s \, dx \right)^{\frac{1}{s}}, \quad t \ge 0, \\
	M_{s, Q} \big(\varphi(f)\big) &\coloneqq \left( \dashint\nolimits_Q \big(\varphi(x, \abs{f(x)})\big)^{s} \, dx \right)^{\frac{1}{s}}.
\end{align}
Here $\varphi(x, t) \coloneqq t^{p(x)}$. When $s=1$, we just denote the preceding quantities by $M_{Q} \varphi(t)$ and $M_{Q} \big(\varphi(f)\big)$, respectively. Since we assume $1 < p^- \leq p^+ < \infty$, the convex conjugate of $\varphi(x, t) = t^{p(x)}$ satisfies $\varphi^*(x, t) \eqsim t^{p'(x)}$ (see~\cite[Definition~2.6.1]{DHHR}). For information regarding the conjugate of the form $\big(M_{s, Q} \varphi^*\big)^*(t)$ and its further properties, we refer the reader to~\cite[pp.~151--152]{DHHR}. By a slight abuse of notation, below we write $(M_{s, Q} \varphi^*)'$ to denote the derivative of the function $t \mapsto M_{s, Q} \varphi^*(t)$.

Applying Lemma~\ref{lem:Qepsilon} yields the following result, which is essential for the improved generalized Jensen's inequality. 
\begin{lemma}
	\label{lem:key-estimate-aux}
	Let $p \in \mathcal{A} \cap \mathcal{N}$ with $1<p^-\leq p^+<\infty$. Then there exist $s_0=s_0(n,p^-,p^+,[p]_{\mathcal{A}},[p]_{\mathcal{N}})>1$ and positive constants $c=c(n,p^-,p^+,[p]_{\mathcal{A}},[p]_{\mathcal{N}})$ and $\epsilon=\epsilon(n,p^-,p^+,[p]_{\mathcal{A}},[p]_{\mathcal{N}})$ such that the following holds: for all cubes~$Q$ with $\abs{Q} \leq 1$, all $s\in[1,s_0]$ and $0<t\leq 1/\norm{\indicator_Q}_\px$, we have
	\begin{align}\label{eq:key_aux_lemma}
		(M_{s,Q} \phi)(t) &\leq c\, (M_{s,Q} \phi^*)^*(t) + \abs{Q}^\epsilon \indicatorset{t\leq 1}.
	\end{align}
\end{lemma}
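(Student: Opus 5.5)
Throughout, $\phi(x,t)=t^{p(x)}$. The plan is to compare both sides of \eqref{eq:key_aux_lemma} with the ``frozen'' power $t^{p_Q}$ and to control the error through Lemma~\ref{lem:Qepsilon}. We split into the two regimes $t\le 1$ and $1<t\le 1/\norm{\indicator_Q}_\px$.

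\emph{Upper bound for the left-hand side.} For $t\le 1$ and $x\in Q$ write
\[
t^{p(x)}=t^{p_Q}\,t^{p(x)-p_Q}.
\]
If $t\ge \abs{Q}^{\beta}$ for a suitable $\beta>0$, then $t^{p(x)-p_Q}\le \abs{Q}^{-\beta\abs{p(x)-p_Q}}$. Hence, by Hölder and Lemma~\ref{lem:Qepsilon} applied with $\epsilon$ small compared with $\beta s_0$, we get $M_{s,Q}\phi(t)\lesssim t^{p_Q}$. If instead $t<\abs{Q}^{\beta}$, then trivially $M_{s,Q}\phi(t)\le t^{p^-}\le \abs{Q}^{\beta p^-}$, which is absorbed into the term $\abs{Q}^\epsilon\indicatorset{t\le1}$.

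For $1<t\le 1/\norm{\indicator_Q}_\px$ we use \eqref{eq:indicatorQ}, which gives $t\lesssim \abs{Q}^{-1/p_Q}$. Then $t^{p(x)-p_Q}\lesssim \abs{Q}^{-\abs{p(x)-p_Q}/p^-}$ up to constants. Lemma~\ref{lem:Qepsilon} only covers small multiples $\epsilon$ in the exponent, so this step is the \emph{main obstacle}: the exponent $1/p^-$ is not small. My first attempt is to use the quantitative John--Nirenberg estimate behind Lemma~\ref{lem:Qepsilon}, namely
\[
\norm{\tfrac1p}_{\setBMO(Q)}\lesssim 1/\log(e+1/\abs{Q}).
\]
This gives exponential integrability only for a fixed small multiple, which would not suffice here. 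I would therefore invoke left-openness ($p\in\mathcal A\cap\mathcal N$ implies boundedness of $M$ on $L^{s\px}$, hence a reverse-Hölder/self-improvement) to get $\dashint_Q(t^{p(x)})^{s}\,dx\lesssim (\dashint_Q t^{p(x)}\,dx)^s$ for $s\le s_0$. This reduces matters to $s=1$, where $\dashint_Q t^{p(x)}\,dx=\abs{Q}^{-1}\int_Q t^{p}\lesssim \abs{Q}^{-1}\lesssim t^{p_Q}$ at the endpoint, by the definition of the norm. Intermediate $t$ would then follow by the convexity interpolation $t\mapsto$ power between $1$ and the endpoint.

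\emph{Lower bound for the right-hand side.} Apply the same argument to $p'$, which also lies in $\mathcal A$, together with Lemma~\ref{lem:Qepsilon} for $p'$. This gives $M_{s,Q}\phi^*(u)\lesssim u^{p'_Q}$ for $u$ in the dual range, and trivially also a lower bound by $u^{p'_Q}$ via Jensen, since $1/p'_Q$ is the harmonic mean. Conjugation reverses inequalities, so $(M_{s,Q}\phi^*)^*(t)\gtrsim t^{p_Q}$ on the corresponding range of $t$. For $t\le 1$ we use $t^{p_Q}\le (M_{s,Q}\phi^*)^{*}(ct)$ and rescale constants.

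Combining the two bounds yields \eqref{eq:key_aux_lemma} with $\epsilon=\min(\beta p^-,\cdot)$ and $c$ depending only on the structural constants.
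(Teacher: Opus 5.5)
Your argument for $0<t\le 1$ is correct and more direct than the paper's. You compare both sides with $t^{p_Q}$ on $[\abs{Q}^\beta,1]$, using Lemma~\ref{lem:Qepsilon} for $p$ and for $p'$; note that what is needed is $s_0\beta$ (and the corresponding exponent on the dual side) below the $\epsilon$ of that lemma, not the reverse. This replaces the paper's evaluation of $\alpha_{s_0}(Q,\abs{Q}^\epsilon)$ combined with the quasi-monotonicity of $\alpha_{s_0}$ from \cite[Lemma~5.7.16]{DHHR}. For $s=1$ your whole scheme would use only $p\in\mathcal{A}$. Interpolating on $[1,1/\norm{\indicator_Q}_\px]$ between the endpoints is also legitimate, but only as log-convexity: $\tau\mapsto\log M_{s,Q}\phi(e^\tau)-p_Q\tau$ is convex by H\"older, hence bounded by its endpoint values. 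Plain convexity of $t\mapsto M_{s,Q}\phi(t)$ would not suffice.

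The genuine gap is the upper endpoint for $s>1$. You need
\[
\dashint_Q \abs{Q}^{-s\frac{p(x)-p_Q}{p_Q}}\,dx\lesssim 1,
\]
i.e.\ $M_{s,Q}\phi(t)\lesssim t^{p_Q}$ at $t=1/\norm{\indicator_Q}_\px$. You also need the analogous bound for $\phi^*$ at $u=1/\norm{\indicator_Q}_\pdx$, because $(M_{s,Q}\phi^*)^*$ decreases as $s$ grows, so reducing only the left-hand side to $s=1$ is not enough. Your justification, boundedness of $M$ on $L^{s\px}$ ``hence reverse H\"older'', does not give this. For $s\ge1$ that boundedness follows trivially from boundedness on $L^\px$ via $(Mf)^s\le M(\abs{f}^s)$. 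Moreover, \eqref{eq:indicatorQ} applied to $p$ or to $sp$ yields the display only with multiplier $s=1$. The multiplier $s>1$ is exactly the self-improvement that must be proved, and it is where the global theory, hence $p\in\mathcal{N}$, enters.

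The paper obtains it from $p\in\mathcal{A}_{\text{global}}$ through two results. The first is \cite[Lemma~5.7.9]{DHHR}, namely $M_{s_0,Q}\phi\dominatedby(M_{s_0,Q}\phi^*)^*$. The second is \cite[Lemma~5.7.16]{DHHR}: $\alpha_{s_0}\eqsim1$ at $t=1/\norm{\indicator_Q}_\px$, and $\alpha_{s_0}$ is quasi-monotone on $[1,1/\norm{\indicator_Q}_\px]$.

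Within your framework you could close the gap with left-openness applied to $p$ and to $p'$, both of which lie in $\mathcal{A}_{\text{global}}$. This gives $\theta<1$ with $\theta p,\theta p'\in\mathcal{A}$. Apply \eqref{eq:indicatorQ} to $q:=\theta p'$, in particular to $\norm{\indicator_Q}_{q'}$. This gives $\dashint_Q\abs{Q}^{-(q'(x)/q'_Q-1)}\,dx\eqsim1$ with
\[
\frac{q'(x)}{q'_Q}-1=\frac{p(x)-p_Q}{p_Q\,\bigl(1-(1-\theta)p(x)\bigr)}\ \ge\ \frac{1}{1-(1-\theta)p^-}\cdot\frac{p(x)-p_Q}{p_Q}\qquad\text{on }\{p>p_Q\}.
\]
This is the required bound with $s=1/(1-(1-\theta)p^-)>1$. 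The dual bound follows symmetrically from $\theta p\in\mathcal{A}$. As written, however, this key step is only asserted, not proved.
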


\begin{proof}
Let us prove \eqref{eq:key_aux_lemma} for some $s_0>1$, as formulated. The case $s\in[1,s_0]$ then follows by the classical Jensen's inequality.
For the proof we use the notation of~\cite{DHHR}, except that $\mathcal{A}_{\loc}$ and $\mathcal{A}$ from~\cite{DHHR} are replaced here by $\mathcal{A}$ and $\mathcal{A}_{\text{global}}$, respectively.  Note that $p \in \mathcal{A} \cap \mathcal{N}$ implies that $p \in \mathcal{A}_{\text{global}}$, with $[p]_{\mathcal{A}_{\text{global}}}$ depending on $p^-$, $p^+$, $[p]_{\mathcal{A}}$ and $[p]_{\mathcal{N}}$, see \cite[Theorem~4.52]{CUF}. Hence by \cite[Lemma~5.7.9]{DHHR} there exists $s_0$ such that $(M_{s_0,Q} \phi) \dominatedby (M_{s_0,Q} \phi^*)^*$ with the notation of~\cite[Definition~5.2.17]{DHHR}. We define
  \begin{align*}
    \alpha_{s_0}(Q,t) &\coloneqq \frac{(M_{s_0,Q} \phi)(t)}{(M_{s_0,Q} \phi^*)^*(t)}.
  \end{align*}
  Our goal is to show that
  \begin{align}
    \label{eq:key-estimate-aux1}
    \alpha_{s_0}(Q,t) &\lesssim 1 \qquad \text{for all } t \in \bigg[\abs{Q}^\epsilon,\frac{1}{\norm{\indicator_Q}_\px}\bigg].
  \end{align}
  Moreover, for $t \leq \abs{Q}^\epsilon$ we have by convexity
  \begin{align}
    \label{eq:key-estimate-aux2}
    (M_{s_0,Q} \phi)(t) &\leq t\, (M_{s_0,Q} \phi)(1) \leq t \leq \abs{Q}^\epsilon.
  \end{align}
  Now, the claim follows as a combination of \eqref{eq:key-estimate-aux1} and \eqref{eq:key-estimate-aux2}. It remains to prove~\eqref{eq:key-estimate-aux1}.
  
  By \cite[Lemma 5.7.16]{DHHR} we have
  \begin{align*}
    \alpha_{s_0}\bigg(Q, \frac{1}{\norm{\indicator_Q}_\px}\bigg) &\eqsim 1 \quad \text{and} \quad
    \alpha_{s_0}(Q, 1) \eqsim 1
  \end{align*}
  and
  \begin{alignat*}{2}
    \alpha_{s_0}(Q,t_2) &\lesssim \alpha_{s_0}(Q,t_1)+1 &\qquad&\text{for $0 < t_1 \leq t_2 \leq 1$},
    \\
    \alpha_{s_0}(Q,t_3) &\lesssim \alpha_{s_0}(Q,t_4)+1 &\qquad&\text{for $1\leq t_3 \leq t_4$}.
  \end{alignat*}
  This proves~\eqref{eq:key-estimate-aux1} provided that we can show that
  \begin{align}
    \label{eq:key-estimate-aux3}
    \alpha_{s_0}\big(Q, \abs{Q}^\epsilon\big) &\lesssim 1.
  \end{align}
  The proof of~\eqref{eq:key-estimate-aux3} will be based on Lemma~\ref{lem:Qepsilon}. Let us choose~$\tau \in (0,1)$ such that
  \begin{align*}
    (M_{s_0,Q} \phi^*)'(\tau) \eqsim \abs{Q}^\epsilon.
  \end{align*}
  Then $\abs{Q}^\epsilon \lesssim \tau^{(p^+)'-1}$, which implies
  \begin{align}
    \label{eq:key-estimate-aux4}
    \tau \gtrsim  \abs{Q}^{\epsilon(p^+-1)}.
  \end{align}
  The implicit constants in the above estimates depend only on $p^-$ and $p^+$. Moreover, it follows by the calculations in \cite[Lemma~5.7.16]{DHHR} and Jensen's inequality that
  \begin{align*}
     \alpha_{s_0}(Q, \abs{Q}^\epsilon) &\eqsim \alpha_{s_0}\big(Q, (M_{s_0,Q} \phi^*)'(\tau)\big) 
     \\
     &\eqsim \Bigg( \dashint\nolimits_Q \bigg( \dashint\nolimits_Q \tau^{\frac{s_0(p(y)-p(z))}{(p(y)-1)(p(z)-1)}} \,dz \bigg)^{p(y)-1} \,dy \Bigg)^{\frac{1}{s_0}}
     \\
     &\lesssim \Bigg( \dashint\nolimits_Q \bigg( \dashint\nolimits_Q \tau^{\frac{-s_0\abs{p(y)-p(z)}}{(p(y)-1)(p(z)-1)}} \,dz \bigg)^{p(y)-1} \,dy \Bigg)^{\frac{1}{s_0}}
     \\
     &\lesssim \Bigg( \dashint\nolimits_Q  \dashint\nolimits_Q \tau^{\frac{-s_0 p^+\abs{p(y)-p(z)}}{(p(y)-1)(p(z)-1)}} \,dz \,dy \Bigg)^{\frac{1}{s_0}}.
  \end{align*}
  Setting $p_c\coloneq \big(\frac{p^+}{p^- -1}\big)^2$, we obtain
  \begin{align*}
    \alpha_{s_0}(Q,\abs{Q}^\epsilon) &\lesssim \bigg( \dashint\nolimits_Q \dashint\nolimits_Q \tau^{\frac{-s_0 p^+ \abs{p(y)-p(z)}}{(p^- -1)^2}} \,dz \,dy \Bigg)^{\frac{1}{s_0}}
    \\
    &
    \lesssim \bigg( \dashint\nolimits_Q \dashint\nolimits_Q \abs{Q}^{-s_0 \epsilon p_c \abs{p(y)-p(z)}} \,dz \,dy \Bigg)^{\frac{1}{s_0}}
    \\
    &\lesssim \bigg( \dashint\nolimits_Q \abs{Q}^{-s_0 \epsilon p_c \abs{p(y)-p_Q}}\,dy  \dashint\nolimits_Q \abs{Q}^{-s_0 \epsilon p_c \abs{p(z)-p_Q}}\,dz \Bigg)^{\frac{1}{s_0}}.
  \end{align*}
  Now, for $\epsilon$ small enough depending on $s_0$, $p^-$, $p^+$ and $[p]_{\mathcal{A}}$ it follows by Lemma~\ref{lem:Qepsilon} that $\alpha_{s_0}(Q,\abs{Q}^\epsilon) \lesssim 1$. This proves~\eqref{eq:key-estimate-aux3}. The proof of the lemma is complete.
\end{proof}
Using the preceding lemma, we obtain the following generalized Jensen's inequality.
In what follows, we consider the sum of the two spaces and its associated norm, defined as:
\begin{align*}
	L^{r(\cdot)}(\mathbb{R}^n) + L^{s(\cdot)}(\mathbb{R}^n) &= \big\{f = g + h : g \in L^{r(\cdot)}(\mathbb{R}^n), h \in L^{s(\cdot)}(\mathbb{R}^n)\big\}, \\
	\intertext{where the norm is given by}
	\|f\|_{L^{r(\cdot)}+L^{s(\cdot)}} &= \inf_{\{f=g+h, g\in L^{r(\cdot)}, h\in L^{s(\cdot)}\}} (\|g\|_{r(\cdot)} + \|h\|_{s(\cdot)}).
\end{align*}
\begin{theorem}[Generalized Jensen's inequality]
	\label{thm:generalized-jensen}
	Let $p \in \mathcal{A} \cap \mathcal{N}$ with $1<p^-\leq p^+<\infty$. Then there exist constants $c=c(n,p^-,p^+,[p]_{\mathcal{A}},[p]_{\mathcal{N}})>0$ and $\epsilon=\epsilon(n,p^-,p^+,[p]_{\mathcal{A}},[p]_{\mathcal{N}})>0$ such that the following holds: for all cubes~$Q$ with $\abs{Q} \leq 1$ and $\norm{f}_{L^{\px} + L^\infty} \leq 1$,
	\begin{align*}
		\dashint\nolimits_Q \bigg( \dashint\nolimits_Q \abs{f(y)}\,dy \bigg)^{p(x)} \,dx  &\leq c\, \dashint\nolimits_Q \abs{f(x)}^{p(x)}\,dx  + \abs{Q}^\epsilon. 
	\end{align*}
\end{theorem}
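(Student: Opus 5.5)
Write $t \coloneqq \dashint_Q \abs{f(y)}\,dy$; the left-hand side of the theorem is $(M_Q\phi)(t)$. The plan is to apply Lemma~\ref{lem:key-estimate-aux} with $s=1$ and then bound the conjugate term $(M_Q\phi^*)^*(t)$ by the modular $\dashint_Q \abs{f}^{p(x)}\,dx$ through a duality (Young-type) argument.

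\textbf{Step 1: reduce to the range of the lemma.} Since $\norm{f}_{L^\px+L^\infty}\le 1$, write $f=g+h$ with $\norm{g}_\px + \norm{h}_\infty \le 1$, up to an arbitrarily small slack. By H\"older's inequality~\eqref{eq:holder} and~\eqref{eq:indicatorQ},
\begin{align*}
  t \le \frac{2}{\abs{Q}}\norm{\indicator_Q}_{\pdx}\norm{g}_\px + \norm{h}_\infty
  \lesssim \frac{1}{\norm{\indicator_Q}_\px} + 1 .
\end{align*}
Because $\abs{Q}\le 1$, we have $\norm{\indicator_Q}_\px\le 1$, so the first term dominates and $t \le c_0/\norm{\indicator_Q}_\px$ with $c_0 = c_0([p]_{\mathcal A})$. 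For $t\le 1/\norm{\indicator_Q}_\px$ we apply Lemma~\ref{lem:key-estimate-aux} directly. Otherwise we replace $f$ by $f/c_0$, which costs only a factor $c_0^{p^+}$ on the left-hand side, and apply the lemma to $t/c_0$. The lemma then gives
\begin{align*}
  (M_Q\phi)(t) \lesssim (M_Q\phi^*)^*(t) + \abs{Q}^\epsilon .
\end{align*}

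\textbf{Step 2: bound the conjugate term.} The goal is $(M_Q\phi^*)^*(t)\lesssim \dashint_Q \abs{f}^{p(x)}\,dx$. By the definition of the conjugate, $(M_Q\phi^*)^*(t)=\sup_{u\ge0}\big(tu - M_Q\phi^*(u)\big)$. For each $u\ge 0$, Young's inequality applied pointwise, $\abs{f(x)}u \le \phi(x,\abs{f(x)}) + \phi^*(x,u)$, and then averaged over $Q$, gives
\begin{align*}
  tu = \dashint_Q \abs{f(x)}\,u\,dx \le \dashint_Q \phi(x,\abs{f(x)})\,dx + M_Q\phi^*(u).
\end{align*}
Hence $tu - M_Q\phi^*(u)\le \dashint_Q\phi(x,\abs{f(x)})\,dx$ for every $u$, and taking the supremum yields $(M_Q\phi^*)^*(t)\le \dashint_Q\abs{f}^{p(x)}\,dx$. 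The only care needed is that $\phi^*(x,t)\eqsim t^{p'(x)}$ rather than equality. This is harmless: we use the exact conjugate of $\phi$ consistently, or absorb the equivalence constants by a dilation, which changes things only by constants depending on $p^\pm$.

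\textbf{Step 3: combine and identify the main obstacle.} Steps 1 and 2 together give the theorem with $c$ depending on $n,p^-,p^+,[p]_{\mathcal A},[p]_{\mathcal N}$. The substantive work already sits in Lemma~\ref{lem:key-estimate-aux}, so the expected obstacle is Step 1: checking that $t$ lies in the admissible range $t\le 1/\norm{\indicator_Q}_\px$, with a controlled constant, for $f$ in the sum space. We need the norm $\norm{\indicator_Q}_{\pdx}\eqsim\abs{Q}/\norm{\indicator_Q}_\px$ from~\eqref{eq:pinA}, and the observation that on small cubes the $L^\infty$ part contributes $t\le 1\le 1/\norm{\indicator_Q}_\px$. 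The rescaling by $c_0$ is legitimate because $\phi(x,c_0 s)\le c_0^{p^+}\phi(x,s)$.
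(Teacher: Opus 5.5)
Your proposal is correct and is essentially the paper's proof: apply Lemma~\ref{lem:key-estimate-aux} with $s=1$, check via H\"older's inequality and $p\in\mathcal{A}$ that $M_Qf\lesssim 1/\norm{\indicator_Q}_{\px}$ (rescaling via $\Delta_2$ to enter the admissible range), and use $(M_Q\phi^*)^*(M_Qf)\le M_Q(\phi(f))$. The paper cites this last inequality as \cite[Lemma~5.2.8]{DHHR}, and your Young's inequality argument in Step~2 simply reproves it.

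As in the paper's appeal to $\Delta_2$, your rescaling leaves a factor $c_0^{p^+}$ in front of $\abs{Q}^\epsilon$. To get the constant $1$ exactly as stated, note that the proof of the lemma gives $(M_Q\phi)(t)\lesssim(M_Q\phi^*)^*(t)$ with no error term for $t\in[\abs{Q}^\epsilon,1/\norm{\indicator_Q}_{\px}]$. Hence for $t>1/\norm{\indicator_Q}_{\px}$ you can rescale to $\max\{t/c_0,1\}$ without ever invoking the error term.
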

\begin{proof}
	Putting $s=1$ in \eqref{eq:key_aux_lemma}, we get that
	\[(M_{Q} \phi)(t)\leq c\, (M_{Q}\phi^*)^*(t) + \abs{Q}^\epsilon \indicatorset{t\leq 1}.\]
	On the other hand, due to \cite[Lemma 5.2.8]{DHHR}, we have that $(M_{Q}\phi^*)^*(M_Qf)\leq M_Q(\phi(f))$. Putting this together gives us the desired result. The only thing left to check is that $\norm{f}_{L^{\px} + L^\infty} \leq 1$ implies that $M_Qf\lesssim \frac{1}{\norm{\indicator_Q}_{\px}}$, and using the $\Delta_2$ condition, we can use \eqref{eq:key_aux_lemma} for $t=M_Qf$.
	
	By the triangle inequality, it suffices to consider the cases $\norm{f}_{\px} \leq 1$ and $\norm{f}_\infty \leq 1$ separately. 
	If $\norm{f}_{\px} \leq 1$, then, with $p \in \mathcal{A}$ and $\abs{Q} \leq 1$,
	\begin{align*}
		M_Q f \leq 2\,\frac{\norm{\indicator_Q}_{\pdx}}{\abs{Q}} \lesssim \frac{1}{\norm{\indicator_Q}_{\px}}.
	\end{align*}
	If $\norm{f}_\infty \leq 1$, then, with $\abs{Q} \leq 1$,
	\begin{align*}
		M_Q f \leq 1 \lesssim \frac{1}{\norm{\indicator_Q}_{\px}}.
	\end{align*}
	This completes the proof.
\end{proof}

Although the preceding estimate is of independent interest, we require the improved generalized Jensen's inequality. This inequality is essential for the Sobolev--\Poincare inequality, which, in turn, is a key ingredient for our regularity results. Furthermore, note that Theorem~\ref{thm:generalized-jensen} follows from the next theorem by setting $s=1$.

\begin{theorem}[Improved generalized Jensen's inequality]
  \label{thm:improved-jensen}% 
Let $p \in \mathcal{A} \cap \mathcal{N}$ with $1<p^-\leq p^+<\infty$. Then there exist constant $s_0=s_0(n,p^-,p^+,[p]_{\mathcal{A}},[p]_{\mathcal{N}})>1$, $c=c(n,p^-,p^+,[p]_{\mathcal{A}},[p]_{\mathcal{N}})>0$ and $\epsilon=\epsilon(n,p^-,p^+,[p]_{\mathcal{A}},[p]_{\mathcal{N}})>0$ such that the following holds: for all $s\in[1,s_0]$, all cubes~$Q$ with~$\abs{Q} \leq 1$ and $\norm{f}_{L^{\px/s} + L^{\px} + L^\infty} \leq 1$, we have
  \begin{align}\label{eq:improved_Jens}
    (M_{s,Q} \phi)(M_Q f) &\leq c\, M_{1/s,Q} \big(\phi(f)\big) + \abs{Q}^\epsilon.
  \end{align}
  In other words,
  \begin{align*}
    \Bigg(\dashint\nolimits_Q \bigg( \dashint\nolimits_Q \abs{f(y)}\,dy \bigg)^{sp(x)} \,dx \Bigg)^{\frac 1s} &\leq c\, \bigg( \dashint\nolimits_Q \abs{f(x)}^{p(x)/s}\,dx \bigg)^{s} + \abs{Q}^\epsilon. 
  \end{align*}
\end{theorem}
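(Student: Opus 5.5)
The plan is to mimic the proof of Theorem~\ref{thm:generalized-jensen}, now using Lemma~\ref{lem:key-estimate-aux} with general $s\in[1,s_0]$. That is, we combine a pointwise comparison of $M_{s,Q}\phi$ with the conjugate $(M_{s,Q}\phi^*)^*$ and a H\"older/duality step. The duality step should bound $(M_{s,Q}\phi^*)^*(M_Qf)$ by the ``$1/s$-average'' $M_{1/s,Q}(\phi(f))$.

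\textbf{Step 1 (admissible range of $t=M_Qf$).} We first check that $\norm{f}_{L^{\px/s}+L^{\px}+L^\infty}\le 1$ implies $M_Qf\lesssim 1/\norm{\indicator_Q}_\px$. By the triangle inequality it suffices to treat the three pieces separately. The $L^\px$ and $L^\infty$ pieces are handled exactly as in the proof of Theorem~\ref{thm:generalized-jensen}. For $\norm{g}_{\px/s}\le1$ we use H\"older~\eqref{eq:holder} with exponents $\px/s$ and $(\px/s)'$. Since $s\ge1$ and $\abs{Q}\le1$, the modular of $g$ is at most one, so $\dashint_Q\abs g\le(\dashint_Q \abs g^{p/s})^{\ldots}$. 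The cleanest route is $M_Q g\le (M_Q\abs g^{p(\cdot)/s\cdot})$-type bounds via $\abs g\le 1+\abs g^{p(\cdot)/s}$. This gives $M_Qg\le 1+\abs{Q}^{-1}\lesssim \abs{Q}^{-1}$. We then need $\abs{Q}^{-1}$ compared to $1/\norm{\indicator_Q}_\px\eqsim\abs{Q}^{-1/p_Q}$, which is worse. So we would instead use $s\le s_0$ small and $\px/s\in\mathcal A$ (for $s$ close to 1, by left-openness/\cite{DHHR}) to get $M_Qg\lesssim 1/\norm{\indicator_Q}_{\px/s}\eqsim\abs{Q}^{-s/p_Q}$. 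The mismatch with $\abs{Q}^{-1/p_Q}$ is harmless after the $\Delta_2$ condition and rescaling by a constant only if we also permit $t$ slightly above $1/\norm{\indicator_Q}_\px$. I expect to handle this by writing $f=g+h$ and treating $g$ directly through Step~3 rather than through the lemma's range restriction.

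\textbf{Step 2 (pointwise comparison).} For $t$ in the admissible range, Lemma~\ref{lem:key-estimate-aux} gives
\[
(M_{s,Q}\phi)(t)\le c\,(M_{s,Q}\phi^*)^*(t)+\abs{Q}^\epsilon.
\]
By $\Delta_2$ this survives replacing $t$ by a constant multiple.

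\textbf{Step 3 (duality, the main obstacle).} We must show $(M_{s,Q}\phi^*)^*(M_Qf)\lesssim M_{1/s,Q}(\phi(f))$, generalizing \cite[Lemma~5.2.8]{DHHR}. By Young's inequality, for every $u\ge0$,
\[
M_Qf\cdot u=\dashint_Q\abs f\,u\le \dashint_Q \bigl(\phi(x,\abs f/\lambda(x))+\phi^*(x,\lambda(x)u)\bigr)dx
\]
with a weight $\lambda$ to be chosen. Choosing $\lambda$ suitably, a reverse H\"older-type step gives $\dashint\phi(f)\lambda^{-p}$ and $\dashint\phi^*(u)\lambda^{p'}$. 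Here we take $\lambda$ so that H\"older with exponents $s$ and $s'$ turns $\dashint_Q \phi^*(x,u)\lambda^{p'}$ into $M_{s,Q}\phi^*(u)$, and the remaining factor into $(\dashint_Q \phi(f)^{1/s})^{s}$. Concretely, we put $\lambda^{p'(x)}\approx \phi(x,\abs f)^{(1/s-1)\cdot\theta}$ normalized, and verify the exponents close using $1/s+1/s'=1$. Taking the supremum over $u$ gives the conjugate. If this weighted Young argument does not close exactly, the fallback is to apply the classical result to $\tilde\phi=\phi^{1/s}$ (exponent $\px/s$) together with H\"older. Finally we combine Steps~2 and~3 to obtain~\eqref{eq:improved_Jens}.
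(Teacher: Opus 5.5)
Your plan has a genuine gap, and it is where you already hesitate. You apply Lemma~\ref{lem:key-estimate-aux} to $p$ itself with parameter $s$. This creates two problems that Steps~1 and~3 cannot repair.

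\emph{The range.} For the $L^{\px/s}$ piece $g$ you only get $M_Qg\lesssim\abs{Q}^{-s/p_Q}$. This exceeds $1/\norm{\indicator_Q}_{\px}\eqsim\abs{Q}^{-1/p_Q}$ by the factor $\abs{Q}^{-(s-1)/p_Q}$, which is unbounded as $\abs{Q}\to0$. The $\Delta_2$ condition cannot absorb this. Sending $g$ ``through Step~3'' does not help either, since Step~3 only bounds the conjugate side and never produces $(M_{s,Q}\phi)(M_Qf)$.

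\emph{The duality.} The claim $(M_{s,Q}\phi^*)^*(M_Qf)\lesssim M_{1/s,Q}(\phi(f))$ is not a generalization of \cite[Lemma~5.2.8]{DHHR}. It fails for general $\phi$. Take $p=\frac32$ on $A\subset Q$ with $\abs{A}=a\abs{Q}$, $p=3$ on $Q\setminus A$, and $f=F\indicator_A$. Since $(x+y)^{1/s}\le x^{1/s}+y^{1/s}$, we have $M_{s,Q}\phi^*(u)\lesssim a^{1/s}u^3+u^{3/2}$. Testing the supremum at $u=\kappa(a^{1-1/s}F)^{1/2}$ with $\kappa$ small gives, for $F$ large depending on $a$, $(M_{s,Q}\phi^*)^*(aF)\gtrsim a^{\frac32-\frac1{2s}}F^{3/2}$. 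On the other hand $M_{1/s,Q}(\phi(f))=a^sF^{3/2}$. The ratio is $\gtrsim a^{-(s-1)(2s-1)/(2s)}$, which tends to $\infty$ as $a\to0$ whenever $s>1$.

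This exponent is not in $\mathcal{A}$. However, your weighted Young/H\"older argument uses no property of $p$, so it would apply here too, and no choice of $\lambda$ can close it. Your fallback also fails. Classical duality for $\rho=\phi^{1/s}$ gives $\bigl((M_Q\rho^*)^*(M_Qf)\bigr)^s\le M_{1/s,Q}(\phi(f))$. So the same example shows that no H\"older step can bound $(M_{s,Q}\phi^*)^*$ by $\bigl((M_Q\rho^*)^*\bigr)^s$.

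The missing idea is to change the exponent in the key estimate, not the duality. Left-openness (\cite[Theorem~5.7.2]{DHHR} with \cite[Theorem~4.52]{CUF}) gives $\theta_0<1$ such that $\theta p\in\mathcal{A}\cap\mathcal{N}$ with uniform constants for $\theta\in[\theta_0,1)$. Fix $\theta=1/s_0$ and apply Lemma~\ref{lem:key-estimate-aux} to $\rho(x,t)=t^{p(x)/s_0}=\phi^{1/s_0}(x,t)$ with integrability parameter $s_0^2$. This settles both problems:
\begin{itemize}
\item The admissible range is now $t\le1/\norm{\indicator_Q}_{\theta\px}\eqsim\abs{Q}^{-s_0/p_Q}$. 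This covers the $L^{\px/s}$, $L^{\px}$ and $L^\infty$ pieces.
\item The left side is $(M_{s_0^2,Q}\rho)(t)=\bigl((M_{s_0,Q}\phi)(t)\bigr)^{1/s_0}$.
\item On the right, Jensen gives $(M_{s_0^2,Q}\rho^*)^*\le(M_Q\rho^*)^*$. The \emph{unmodified} \cite[Lemma~5.2.8]{DHHR} for $\rho$ then yields $(M_Q\rho^*)^*(M_Qf)\le M_Q(\rho(f))=\bigl(M_{1/s_0,Q}(\phi(f))\bigr)^{1/s_0}$.
\end{itemize}
Raising to the power $s_0$ gives the claim for $s=s_0$. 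Monotonicity of both sides in $s$ then covers $s\in[1,s_0]$.

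So the $s$-power on the left comes from the key estimate's own integrability parameter, and the $1/s$-average on the right comes from classical duality for $\rho$. No generalized duality for $\phi$ is needed.
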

\begin{proof}
We prove \eqref{eq:improved_Jens} for $s_0>1$; the case $s\in[1,s_0]$ follows from Jensen's inequality.
 Since $p \in \mathcal{A} \cap \mathcal{N}$, we have $p \in \mathcal{A}_{\text{global}}$, with $[p]_{\mathcal{A}_{\text{global}}}$ depending on $p^-$, $p^+$, $[p]_{\mathcal{A}}$ and $[p]_{\mathcal{N}}$, see \cite[Theorem~4.52]{CUF}. Hence, by \cite[Theorem~5.7.2]{DHHR} we find $\theta_{0} \in (1/p^-,1)$ with $\theta_0 = \theta_0(p^-,p^+,[p]_{\mathcal{A}},[p]_\mathcal{N})$ such that $\theta_0 p \in \mathcal{A}_{\text{global}}$. Thus $\theta_0 p \in \mathcal{N} \cap \mathcal{A}$.  Hence, we can apply Lemma~\ref{lem:key-estimate-aux} to $\theta_0 p$. Due to \cite[Lemma 4.4.7]{DHHR} we have that $(\theta p)^-$, $(\theta p)^+$, $[\theta p]_{\mathcal{A}}$ and $[\theta p]_{\mathcal{N}}$ are uniformly bounded with respect to $\theta \in [\theta_0,1)$. Thus, we can find $\tilde{s} = \tilde{s}(n,p^-,p^+,[p]_{\mathcal{A}}, [p]_{\mathcal{N}})>1$ and $\epsilon=\epsilon(n,p^-,p^+,[p]_{\mathcal{A}}, [p]_{\mathcal{N}})>0$ such that we can apply Lemma~\ref{lem:key-estimate-aux} to $\theta p$ with $\tilde{s}$ and~$\epsilon$.

  Now, let us fix $\theta \in [\theta_0,1)$ and $s_0 \in (1,\sqrt{\tilde{s}}]$ such that $\theta s_0 = 1$. Let $\rho(x,t) = t^{p(x)/s_0} = \phi(x,t^{1/s_0}) = \phi^{1/s_0}(x,t)$. In particular, it follows from Lemma~\ref{lem:key-estimate-aux} using $s_0^2 \in (1,\tilde{s}]$
  that for $\abs{Q} \leq 1$ and $t \leq 1/\norm{\indicator_Q}_{\theta \px}$, there holds
  \begin{align}
    \label{eq:key-estimate-final1}
    (M_{s_0^2,Q} \rho)(t) &\leq c\, (M_{s_0^2,Q} \rho^*)^*(t) + \abs{Q}^\epsilon \indicatorset{t\leq 1}.
  \end{align}
  We will prove the theorem using this choice of~$s_0>1$ and $\theta<1$.  Proceeding exactly as in the preceding theorem, we consider the cases $\norm{f}_{\theta \px} \leq 1$, $\norm{f}_{\px} \leq 1$, and $\norm{f}_\infty \leq 1$ separately. If $\norm{f}_{\theta \px} \leq 1$, then with $\theta p \in \mathcal{A}$
  \begin{align*}
    M_Q f \leq 2\,\frac{\norm{\indicator_Q}_{(\theta \px)'}}{\abs{Q}} \lesssim \frac{1}{\norm{\indicator_Q}_{\theta\px}}.
  \end{align*}
  If $\norm{f}_{\px} \leq 1$, then with $p \in \mathcal{A}$ and $\abs{Q} \leq 1$
  \begin{align*}
    M_Q f \leq 2\,\frac{\norm{\indicator_Q}_{\pdx}}{\abs{Q}} \lesssim \frac{1}{\norm{\indicator_Q}_{\px}} = \frac{1}{\norm{\indicator_Q}_{\theta\px}^\theta} \lesssim \frac{1}{\norm{\indicator_Q}_{\theta\px}}.
  \end{align*}
  If $\norm{f}_\infty \leq 1$, then with $\abs{Q} \leq 1$
  \begin{align*}
    M_Q f \leq 1 \lesssim \frac{1}{\norm{\indicator_Q}_{\theta\px}}.
  \end{align*}
  In all cases $M_Q f \lesssim \frac{1}{\norm{\indicator_Q}_{\theta\px}}$. Using the $\Delta_2$-condition, it suffices to assume in the following that
  \begin{align}
    \label{eq:key-estimate-final2}
    M_Q f \leq \frac{1}{\norm{\indicator_Q}_{\theta\px}}.
  \end{align}
  Hence, we can apply~\eqref{eq:key-estimate-final1} to $t=M_Q f$ to obtain
  \begin{align*}
    (M_{s_0^2,Q} \rho)(M_Q f) &\leq c\, (M_{s_0^2,Q} \rho^*)^*(M_Q f) + \abs{Q}^\epsilon \indicatorset{M_Qf\leq 1}.
  \end{align*}
  By Jensen's inequality we have $(M_{s_0^2,Q} \rho^*)^* \!\leq\! (M_Q \rho^*)^*$. Moreover, $(M_Q \rho^*)^*(M_Q f) \!\le\! M_Q(\rho(f))$ by \cite[Lemma~5.2.8]{DHHR}, so
  \begin{align*}
    (M_{s_0^2,Q} \rho)(M_Q f) &\leq c\, M_Q \big(\rho(f)\big) + \abs{Q}^\epsilon.
  \end{align*}
  Using $\rho^{s_0}(x,t) = \phi(x,t)$ and $s_0\theta =1$, we can rewrite this as
  \begin{align*}
    \big((M_{s_0,Q} \phi)(M_Q f)\big)^{\frac{1}{s_0}} &\leq c\, \big(M_{\theta,Q} \big(\phi(f)\big) \big)^{\frac{1}{s_0}} + \abs{Q}^\epsilon.
  \end{align*}
  Hence,
  \begin{align*}
    (M_{s_0,Q} \phi)(M_Q f) &\lesssim M_{\theta,Q} \big(\phi(f)\big) + \abs{Q}^{s_0\epsilon} \leq M_{\theta,Q} \big(\phi(f)\big) + \abs{Q}^{\epsilon}.
  \end{align*}
  This proves the theorem.
\end{proof}
\begin{remark}\label{rem:normilized_cubes}
  Note that in Theorem~\ref{thm:improved-jensen} in the size restriction $\abs{Q} \leq 1$ as well as in the norm restriction $\norm{f}_{L^{p(\cdot)/s} + L^{p(\cdot)} + L^\infty} \leq 1$ the constant~$1$ can be replaced by any fixed $L>0$.  The constant in the final estimate then depends on $L$. The same applies to all subsequent estimates, and the generalized Jensen inequality can be used for both cubes and balls. This allows us to formulate subsequent results under the normalizations $|Q|\leq 1$ and $|B|\leq 1$. Even if a specific estimate requires a smaller upper bound on the measure of the cubes (or balls), this observation lets us maintain these normalizations at the expense of an additional dependence in the constants.
\end{remark}
\section{Sobolev--\Poincare Inequality}
\label{sec:Sobolev-Poincare}
In this section, we prove a novel Sobolev--\Poincare{}-type inequality for variable exponents satisfying the Muckenhoupt condition coupled with a decay condition. The main ingredient is the improved generalized Jensen's inequality of Theorem~\ref{thm:improved-jensen}. Obtaining the Sobolev--\Poincare{} inequality using a relevant potential estimate is a standard technique. However, the presence of the error term in our improved generalized Jensen's inequality necessitates some adjustments. For the sake of completeness and to clarify the impact of this error term, we provide a proof below. First of all, note that Theorem~\ref{thm:generalized-jensen} and Theorem~\ref{thm:improved-jensen} remain valid if we replace cubes with balls. We assume without loss of generality that $p^- \leq n$, since $p^-$ is for us just a lower bound of~$p$. Note that for $p^- > n$ the Sobolev embedding theorem guarantees that weak solutions are H\"{o}lder continuous. However, the explicit $L^\infty$ estimates are also interesting for $p^- > n$.

Let us introduce the averaging operator.
For $k\in \mathbb{Z}$, we define the averaging operator over dyadic cubes by
\begin{align*}
	T_kf := \sum_{\substack{Q \text{ dyadic}\\ \ell(Q)=2^{-k}}} \indicator_Q \dashint\nolimits_{2Q}\abs{f}\,dy,
\end{align*}
where $\ell(Q)$ is the sidelength of $Q$. If $B$ is a cube, $r_B$ is its side length; if a ball, its radius. Recall that a dyadic cube in $\mathbb{R}^n$ takes the form $\prod_{i=1}^n [2^j m_i, 2^j(m_i + 1))$ for $j, m_1, \dots, m_n \in \mathbb{Z}$. Note that $\rho_{\px}(\abs{f})\coloneq\int_\Omega\abs{f(x)}^{p(x)}\,dx$.
\begin{theorem}[Sobolev--\Poincare]
	\label{thm:improved_poincare}%
Let $p \in \mathcal{A} \cap \mathcal{N}$ with $1 < p^- \leq p^+ < \infty$. Then there exists a constant $s_0=s_0(n,p^-,p^+,[p]_{\mathcal{A}},[p]_{\mathcal{N}})$ with $1 < s_0 < \min\lbrace p^-,n'\rbrace$, and positive real numbers $c=c(n,p^-,p^+,[p]_{\mathcal{A}},[p]_{\mathcal{N}})$ and $\epsilon=\epsilon(n,p^-,p^+,[p]_{\mathcal{A}},[p]_{\mathcal{N}})$ such that the following holds: for all balls~$B$ with~$\abs{B} \leq 1$, all $s_1,s_2 \in [1,s_0]$, and all $u\in W^{1,1}(B)$ with $\norm{\nabla u}_{\px/s_2} \leq 1$,
	\begin{align}
    \label{eq:improved_poincare}
		\bigg(\dashint\nolimits_B \bigg( \frac{\abs{u - \mean{u}_B}}{r_B}\bigg)^{s_1 p(x)}\,dx \Bigg)^{\frac 1{s_1}}
		&\lesssim
		\bigg( \dashint\nolimits_B \abs{\nabla u}^{p(x)/{s_2}}\,dx \bigg)^{s_2} + \abs{B}^\epsilon.
	\end{align}
    It is possible to replace $\mean{u}_B$ by $\mean{u}_E$, where $E \subset B$ with $\abs{E} \eqsim \abs{B}$. Moreover, $B$ can be a ball or a cube.
\end{theorem}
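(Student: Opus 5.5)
The plan is to combine the standard pointwise potential estimate with a dyadic decomposition of the Riesz potential, and then apply the improved generalized Jensen inequality (Theorem~\ref{thm:improved-jensen}) on each dyadic scale. The error term $\abs{Q}^\epsilon$ is summed geometrically.

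\textbf{Step 1: pointwise bound.} For $u\in W^{1,1}(B)$ and a.e.\ $x\in B$ we have
\begin{align*}
  \abs{u(x)-\mean{u}_B}\lesssim \int_B \frac{\abs{\nabla u(y)}}{\abs{x-y}^{n-1}}\,dy.
\end{align*}
The same holds with $\mean{u}_E$ in place of $\mean{u}_B$, with constant depending on $\abs{B}/\abs{E}$; for cubes one uses the convexity of the cube. Set $g=\abs{\nabla u}\indicator_B$. Splitting the kernel into dyadic annuli $2^{-k-1}r_B\le\abs{x-y}<2^{-k}r_B$, $k\ge 0$, we get
\begin{align*}
  \frac{\abs{u(x)-\mean{u}_B}}{r_B}\lesssim \sum_{k\ge 0}2^{-k}\dashint\nolimits_{Q_k(x)} g\,dy,
\end{align*}
where $Q_k(x)$ is a cube of side $\eqsim 2^{-k}r_B$. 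In the spirit of $T_k$, we take $Q_k(x)=2Q$ for the dyadic cube $Q\ni x$ of side $\eqsim 2^{-k}r_B$, so that $Q_k(x)\supset B(x,2^{-k}r_B)$ up to harmless constants.

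\textbf{Step 2: modular estimate.} Take the $L^{s_1p(\cdot)}(B)$-modular. Since $\sum_k 2^{-k}\lesssim 1$, convexity of $t\mapsto t^{s_1p(x)}$ (Jensen with weights $2^{-k-1}$) gives
\begin{align*}
  \dashint\nolimits_B\Big(\frac{\abs{u-\mean{u}_B}}{r_B}\Big)^{s_1p(x)}dx\lesssim\sum_k 2^{-k}\dashint\nolimits_B\big(T_k g\big)^{s_1p(x)}dx.
\end{align*}
Here the constant $2^{s_1p^+}$ is absorbed. Decompose $B$ into the dyadic cubes $Q$ of level $k$ that meet $B$. On each $Q$ the function $T_kg$ is constant, equal to $M_{2Q}g$.

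We want to apply Theorem~\ref{thm:improved-jensen} with exponent $s=s_1$ on $2Q$ (by Remark~\ref{rem:normilized_cubes}, $\abs{2Q}\lesssim 1$). The needed norm condition $\norm{g}_{L^{p/s_1}+L^p+L^\infty}\lesssim 1$ follows from $\norm{g}_{p/s_2}\le1$. Indeed, split $g=g\indicator_{\{g\le1\}}+g\indicator_{\{g>1\}}$. The first piece is bounded by 1. For the second piece, since $s_2\ge s_1$, we have $g^{p/s_1}\le g^{p/s_2\cdot(s_2/s_1)}$; this needs $s_1\le s_2$. If instead $s_1>s_2$, we simply apply the theorem with $s=\max(s_1,s_2)$ and use monotonicity of $M_{s,Q}$ in $s$: raising $s_1$ only increases the left side, and lowering $s_2$ only increases the right side by Hölder. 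The upshot is that we may work with a single $s=\max(s_1,s_2)\le s_0$ throughout.

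\textbf{Step 3: per-cube bound.} The theorem gives
\begin{align*}
  \Big(\dashint\nolimits_{2Q}(M_{2Q}g)^{sp(x)}dx\Big)^{1/s}\lesssim \Big(\dashint\nolimits_{2Q}g^{p/s}\Big)^{s}+\abs{Q}^\epsilon.
\end{align*}

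\textbf{Step 4: summation (the main obstacle).} The difficulty is summing these local $(\cdot)^s$-type quantities over the $Q$ while retaining the outer power $1/s_1$ and inner power $s_2$ on the whole ball $B$.

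For the main term, write $\abs{Q}\,(\dashint_{2Q}g^{p/s})^{s}$. Since $s\ge1$ and $\abs{Q}\le\abs{B}$,
\begin{align*}
  (\dashint\nolimits_{2Q}g^{p/s})^{s}\le\Big(\frac{\abs{B}}{\abs{Q}}\Big)^{s-1}\Big(\dashint\nolimits_B g^{p/s}\Big)^{s-1}\dashint\nolimits_{2Q}g^{p/s}.
\end{align*}
Summing over $Q$ with bounded overlap gives $2^{kn(s-1)}\abs{B}\,(\dashint_Bg^{p/s})^{s}$. This growth in $k$ is killed by the factor $2^{-k}$, provided $n(s_0-1)<1$, i.e.\ $s_0<n'$ (this explains the restriction in the statement). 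A more careful variant would first raise to $s_1$ inside and use $s_1\le s_0$ analogously.

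For the error terms, $\sum_Q\abs{Q}\abs{Q}^{\epsilon}\lesssim\abs{B}\,2^{-kn\epsilon}\abs{B}^{\epsilon}$, which is summable in $k$.

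Finally, take the $1/s_1$ power. The constant from the left side of the $s$-vs.-$s_1$ comparison is handled by Hölder ($s_1\le s$). The $\abs{B}^{\epsilon}$ term becomes $\abs{B}^{\epsilon/s_1}$, so we rename $\epsilon$. The right side is $(\dashint_Bg^{p/s})^{s}\le(\dashint_B g^{p/s_2})^{s_2}$ by Jensen, since $s\ge s_2$. This yields \eqref{eq:improved_poincare}.
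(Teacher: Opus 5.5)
Your architecture matches the paper's: the Riesz-potential bound via the dyadic averages $T_kg$, the improved generalized Jensen inequality on each $2Q$, a Hölder-type summation over the cubes of one level that forces $s$ close to $1$, and geometric summation of the $\abs{Q}^\epsilon$ errors. Reducing to a single $s=\max\{s_1,s_2\}$ is also fine, although your justification is garbled: it is $s\ge s_2$ that gives $g^{p/s}\le g^{p/s_2}$ on $\{g>1\}$.

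The gap is the exponent bookkeeping in Step~4. You sum the modular $\dashint_B (T_kg)^{sp(x)}\,dx$ with weights $\abs{Q}/\abs{B}$. But Theorem~\ref{thm:improved-jensen} bounds the $1/s$-th power of the modular on $2Q$, and undoing that power gives
\[
\dashint\nolimits_{2Q} (M_{2Q}g)^{sp(x)}\,dx \lesssim \Big(\dashint\nolimits_{2Q} g^{p(x)/s}\,dx\Big)^{s^2} + \abs{Q}^{\epsilon s}.
\]
It does not give $(\dashint_{2Q} g^{p/s})^{s}$, which is false in general: constant $p$ and $g\equiv t\gg 1$ on a small cube give $t^{sp}$ versus $t^{p}$.

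Your last step errs in the opposite direction. From a modular bound $(\dashint_B g^{p/s})^{s}+\abs{B}^\epsilon$, taking the $1/s$-th power yields only $\dashint_B g^{p/s}$, not the $s$-th power you claim. That $s$-th power is exactly what the later Gehring argument (with $\theta=1/s$) needs. The two slips cancel in the stated conclusion, but the chain as written does not prove it.

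The repair stays inside your method. Run the Hölder trick with $s^2$ in place of $s$:
\[
\Big(\dashint\nolimits_{2Q} g^{p/s}\Big)^{s^2} \le \Big(\frac{\abs{B}}{\abs{2Q}}\Big)^{s^2-1}\Big(\dashint\nolimits_B g^{p/s}\Big)^{s^2-1}\dashint\nolimits_{2Q}g^{p/s}.
\]
Then level $k$ contributes $\lesssim 2^{kn(s^2-1)}(\dashint_B g^{p/s})^{s^2}$, which is summable against $2^{-k}$ iff $n(s^2-1)<1$. The errors contribute $\abs{B}^{\epsilon s}$, and only then do you take the $1/s$-th power.

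Also, $n(s-1)<1$ means $s<1+\frac1n$, which is strictly below $n'$, so ``i.e.\ $s_0<n'$'' is wrong. The bound $s_0<n'$ in the statement is not what makes the series converge.

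By comparison, the paper takes the $1/s$-th power first and splits over $Q$ by subadditivity of $t\mapsto t^{1/s}$, giving weights $(\abs{Q}/\abs{B})^{1/s}$. It then uses $\ell^1\hookrightarrow\ell^s$ and arrives at the condition $n(s-\frac1s)<1$. Your corrected modular route needs the essentially equivalent $n(s^2-1)<1$. It has the small advantage that the splitting over $k$ follows directly from convexity of $t\mapsto t^{sp(x)}$ with weights $2^{-k-1}$.
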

\begin{proof}
First, we consider the case where $s=s_1=s_2$ in \eqref{eq:improved_poincare}. To obtain \eqref{eq:improved_poincare} for any $s_1,s_2 \in [1,s]$, one can apply the classical Jensen's inequality to the first case. We use the following standard estimate; see \cite[Lemma 1.50]{Maly} and \cite[Lemma 6.1.4]{DHHR}
  \begin{align}
    \label{eq:maly-ziemer}
    \abs{u(x) - \mean{u}_B} \lesssim \int\nolimits_B \frac{|\nabla u(y)|}{|x - y|^{n-1}} \,dy \lesssim r_B \sum_{k=0}^{\infty} 2^{-k} T_{k+k_0} (\indicator_B|\nabla u|)(x)
  \end{align}
  for all $x\in B$, where $k_0\in\mathbb{Z}$ is such that $2^{-k_0-1}\leq r_B\leq2^{-k_0}$. Consequently, because we only deal with cubes $Q$ of side length $2^{-k}$ for $k \geq k_0$ in what follows, we have $\abs{2Q}\lesssim1$. Remark~\ref{rem:normilized_cubes} justifies the use of the improved generalized Jensen's inequality below for these cubes. Using this we get that
  \begin{align*}
    |u(x)-\mean{u}_B| \lesssim r_B
    \sum_{k=k_0}^{\infty}2^{-k+k_0} \sum_{\substack{Q \, \text{dyadic} \\ \ell(Q)=2^{-k}}} \indicator_{Q}(x) \dashint\nolimits_{2Q} \indicator_{ B}|\nabla u|\,dy.
  \end{align*}
  Below, we only use dyadic cubes, and for convenience, we omit writing that $Q$ is dyadic. Using the preceding inequality, the fact that $\ell^1 \hookrightarrow \ell^s$ for $s > 1$, and Theorem~\ref{thm:improved-jensen}, we proceed as follows:
	\begin{align*}
		\lefteqn{\Bigg(\dashint\nolimits_{B}\Big|\frac{u-\mean{u}_B}{r_B}\Big|^{sp(x)}dx\Bigg)^{\frac{1}{s}}}\qquad&
		\\
		&\lesssim\Bigg(\dashint\nolimits_{B}\Big(\sum_{\substack{k\geq k_0}}2^{-(k-k_0)}\sum_{\substack{\ell(Q)=2^{-k}}}\indicator_{Q}(x)\dashint\nolimits_{2Q}\indicator_{B}(y)|\nabla u(y)|dy \Big)^{sp(x)}dx\Bigg)^{\frac{1}{s}}
		\\
		&\lesssim\sum_{\substack{k\geq k_0}}2^{-(k-k_0)}\Bigg(\dashint\nolimits_{B}\Big(\sum_{\substack{\ell(Q)=2^{-k}}} \indicator_{Q}(x)\dashint\nolimits_{2Q}\indicator_{B}(y)\abs{\nabla u(y)}dy\Big)^{sp(x)}dx\Bigg)^{\frac{1}{s}}
		\\
		&\lesssim\sum_{\substack{k\geq k_0}}2^{-(k-k_0)}\sum_{\substack{\ell(Q)=2^{-k}\\B\cap Q\neq\emptyset}} \Bigg(\frac{\abs{Q}}{|B|}\Bigg)^{\frac{1}{s}}\Bigg(\dashint\nolimits_{2Q}\Big(\dashint\nolimits_{2Q}\indicator_B(y)\abs{\nabla u(y)}dy\Big)^{sp(x)}dx\Bigg)^{\frac{1}{s}}
		\\
		&\lesssim\sum_{\substack{k\geq k_0}}2^{-(k-k_0)}\sum_{\substack{\ell(Q)=2^{-k}\\B\cap Q\neq\emptyset}}\Bigg(\frac{\abs{Q}}{|B|}\Bigg)^{\frac{1}{s}} \Biggl(\Bigg(\dashint\nolimits_{2Q}\indicator_B\abs{\nabla u}^{\frac{p(x)}{s}}\,dx \Bigg)^{s}+\abs{2Q}^{\epsilon}\Biggr)=:I
	\end{align*}
Choosing $s$ close to $1$ such that $n \left( s - \frac{1}{s} \right) < 1$ and verifying the hypotheses of Theorem~\ref{thm:improved-jensen} using the fact that $\ell^1 \hookrightarrow \ell^s$ for $s > 1$, we get
\begin{align*}
I&\lesssim\sum_{\substack{k\geq k_0}}2^{-(k-k_0)}\sum_{\substack{\ell(Q)=2^{-k}\\B\cap Q\neq\emptyset}}\Bigg(\frac{\abs{Q}}{|B|}\Bigg)^{\frac{1}{s}-s}\Bigg(\dashint\nolimits_{B}\indicator_{2Q}\abs{\nabla u}^{\frac{p(x)}{s}}\,dx\Bigg)^s
		\\
    &+\sum_{\substack{k\geq k_0}}2^{-(k-k_0)}\sum_{\substack{\ell(Q)=2^{-k}\\B\cap Q\neq\emptyset}}\Bigg(\frac{\abs{Q}}{|B|}\Bigg)^{\frac{1}{s}}\abs{Q}^{\epsilon}
		\\
		&\lesssim\bigg(\dashint\nolimits_B\abs{\nabla u}^{p(x)/s}\,dx \bigg)^s + \abs{B}^\epsilon.
 	\end{align*}
    This proves the claim using $\mean{u}_B$. The same calculation also holds with $\mean{u}_B$ replaced by $\mean{u}_E$, since the first estimate in~\eqref{eq:maly-ziemer} remains valid as explained in detail in~\cite[Remark~8.2.10]{DHHR}. The rest of the calculation is then unchanged.
\end{proof}
We will apply Theorem~\ref{thm:improved_poincare} in particular to functions $u$ with $u=0$ on a set $E\subset B$ with $\abs{E} \eqsim \abs{B}$. In that case we can drop the $\mean{u}_E$ in~\eqref{eq:improved_poincare} completely. The error term~$\abs{B}^\epsilon$, however, causes difficulties later in our De~Giorgi iteration, since it is not small for $u$ small. Therefore, we need an improved error term. This is the purpose of the following lemma. The proof is based on a careful Calder\'on-Zygmund stopping-time argument. For the remainder of this section, we assume that $B$ is a cube.

\begin{lemma}\label{lem:stop}
Let $p \in \mathcal{A} \cap \mathcal{N}$ with $1<p^- \leq p^+ < \infty$. Then there exists a constant $s=s(n,p^-,p^+,[p]_{\mathcal{A}},[p]_{\mathcal{N}})$ with $1<s<\min\{s_0,\sqrt{n'},n/(n+1-p^-)\}$, where $s_0$ is the same as in the preceding theorem, and a positive real number $\epsilon=\epsilon(n,p^-,p^+,[p]_{\mathcal{A}},[p]_{\mathcal{N}},s)$ such that the following holds: for all cubes~$B$ with~$\abs{B}\leq1$, all $s_1,s_2 \in [1,s]$, and all $u\in W^{1,1}(B)$ with $\norm{\nabla u}_{\px/s_2} \leq 1$ vanishing on a set $E \subset B$ with $\abs{E}\eqsim\abs{B}$, we have
	\begin{align}\label{eq:stopping_argument}
	\Bigg(\dashint\nolimits_B \bigg( \frac{\abs{u}}{r_B}\bigg)^{s_1 p(x)}\,dx \Bigg)^{\frac 1{s_1}}
    &\lesssim
	\bigg( \dashint\nolimits_B \abs{\nabla u}^{p(x)/s_2}\,dx \bigg)^{s_2} +\abs{B}^{\epsilon} \Bigg(\frac{\abs{B\cap\set{\abs{u}>0}}}{\abs{B}}\Bigg)^{1+\frac{1}{n}}.
  \end{align}
\end{lemma}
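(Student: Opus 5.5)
We reduce to $s_1=s_2=s$ as in the proof of Theorem~\ref{thm:improved_poincare}; the general case then follows by Jensen's inequality. Write $\lambda\coloneqq \abs{B\cap\set{\abs{u}>0}}/\abs{B}$. The starting point is the pointwise bound~\eqref{eq:maly-ziemer}, which holds with $\mean{u}_E=0$ because $u=0$ on $E$:
\begin{align*}
\abs{u(x)}\lesssim r_B\sum_{k\ge k_0}2^{-(k-k_0)}\,T_k(\indicator_B\abs{\nabla u})(x), \qquad x\in B.
\end{align*}
Since $\abs{u(x)}$ only needs to be estimated on the support set $S\coloneqq B\cap\set{\abs{u}>0}$, the left-hand side of~\eqref{eq:stopping_argument} only involves $\dashint_B\indicator_S(\dots)\,dx$. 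The problem is that the error term $\abs{2Q}^\epsilon$ from Theorem~\ref{thm:improved-jensen} is summed over all dyadic cubes $Q$ of each generation meeting $B$. This sum gives $\abs{B}^\epsilon$ with no gain in $\lambda$. We therefore want to sum it only over cubes where $u$ genuinely lives, or more precisely where $\nabla u$ is not negligible.

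The key step is a Calder\'on--Zygmund stopping-time selection at the level of $\nabla u$. For each dyadic $Q$ of side $2^{-k}$, $k\ge k_0$, we would call $Q$ \emph{good} if
\begin{align*}
\Big(\dashint_{2Q}\indicator_B\abs{\nabla u}^{p(x)/s}\,dx\Big)^{s}\ge \abs{2Q}^{\epsilon}\,\delta_k,
\end{align*}
with a threshold $\delta_k$ to be tuned, for instance $\delta_k\eqsim 2^{-(k-k_0)\beta}$ for a small $\beta>0$. On good cubes the Jensen error is absorbed into the gradient term, which is then summed exactly as in Theorem~\ref{thm:improved_poincare}. This requires the choice $n(s-1/s)<1$, which is where $s<\sqrt{n'}$ enters. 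On bad cubes we do not use Theorem~\ref{thm:improved-jensen} at all. Instead we bound $\big(\dashint_{2Q}\abs{\nabla u}\big)^{sp(x)}$ directly, using the smallness of the gradient average together with the fact that it is bounded by $1/\norm{\indicator_Q}$. By Lemma~\ref{lem:Qepsilon}, the oscillation of $p$ over $Q$ costs only a bounded factor, so such cubes contribute at most a multiple of $\abs{Q}^{\epsilon}\delta_k$ times a small power.

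Next, only cubes meeting $S$ matter, since we integrate against $\indicator_S$. Counting the generation-$k$ cubes that intersect $S$ gives at most $\min\{\lambda\abs{B}/\abs{Q},\ \abs{B}/\abs{Q}\}$ cubes that lie essentially inside $S$. The remaining cubes meet the boundary of $S$, and to control their number we would use the relative isoperimetric (Poincar\'e with vanishing set) inequality. Since $u$ vanishes on $E$ with $\abs{E}\eqsim\abs{B}$, the measure $\lambda$ is tied to $\nabla u$ through the $L^1$ Sobolev inequality:
\begin{align*}
\lambda^{(n-1)/n}\lesssim \frac{r_B}{\abs{B}}\int_B\abs{\nabla \indicator\dots}.
\end{align*}
This is the origin of the exponent $1+\frac1n=\frac{n+1}{n}$: one factor of $\lambda$ comes from the measure of $S$, and a factor $\lambda^{1/n}$ comes from the scale at which a dyadic cube can see $S$. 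Cubes with $\abs{Q}>\lambda\abs{B}$ are handled by Hölder's inequality on $\indicator_S$, which yields $(\abs{S}/\abs{Q})^{1/s}$. The condition $s<n/(n+1-p^-)$ ensures that the resulting geometric series in $k$ still converges after this Hölder loss.

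The main obstacle is the bookkeeping in the last step: extracting precisely $\lambda^{1+1/n}$ rather than only $\lambda^{1/s}$. My first attempt would split the generations at $k_\lambda$ with $2^{-n(k_\lambda-k_0)}\eqsim\lambda$. For $k\le k_\lambda$ we use Hölder's inequality with $\indicator_S$, which gives $\lambda^{1/s}\abs{Q}^\epsilon$ per cube, times a factor $(\abs{Q}/\abs{B})^{1/s}$. For $k>k_\lambda$ we count at most $\lambda\abs{B}/\abs{Q}$ cubes and sum $2^{-(k-k_0)}$ from $k_\lambda$ on, which produces the extra $\lambda^{1/n}$. If the total falls short of $\lambda^{1+1/n}$, the fallback is to shrink $\epsilon$ and use part of $\abs{Q}^\epsilon\le(\lambda\abs{B})^{\epsilon}$ on small cubes to gain the missing power, accepting a smaller $\epsilon$ in the statement.
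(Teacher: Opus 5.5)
There is a genuine gap, and it sits in the step you call bookkeeping. As long as you estimate $|u|$ through the potential bound \eqref{eq:maly-ziemer} on all of $B$, the generations with $|Q|\ge\lambda|B|$ cannot produce the factor $\lambda^{1+1/n}$:
\begin{itemize}
\item The cubes of side $\sim r_B$ meeting $S$ each carry the error $|2Q|^\epsilon\sim|B|^\epsilon$ from Theorem~\ref{thm:improved-jensen}.
\item Their total weight $\sum_Q|Q\cap S|/|B|$ is at most $\lambda$ (or $\lambda^{1/s}$ after your H\"older step with $\indicator_S$).
\item The Jensen inequality, used as a black box, gives nothing better on these cubes.
\end{itemize}
So the best your scheme yields is an error of order $\lambda|B|^\epsilon$.

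Your fallback acts on the wrong cubes. On the small ones ($k>k_\lambda$), the tail $\sum_{k>k_\lambda}2^{-(k-k_0)}\eqsim\lambda^{1/n}$ already supplies the gain, provided you weight by $|Q\cap S|$ rather than count cubes. On the large ones, $|Q|^\epsilon\ge(\lambda|B|)^\epsilon$, so shrinking $\epsilon$ buys no power of $\lambda$.

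Your other ingredients do not close this either. The good/bad split on the level of $\nabla u$ only converts $|Q|^\epsilon$ into a power of $|Q|^\epsilon\delta_k$ and carries no information about $\lambda$. The isoperimetric step points the wrong way: the relative isoperimetric inequality bounds the perimeter of $S$ from \emph{below} by $\lambda^{(n-1)/n}$. Counting cubes that meet $\partial S$ would need an \emph{upper} bound, which $S=\{|u|>0\}$ does not have. Nor is $\lambda$ controlled by $\nabla u$ at all: a tiny multiple of a bump supported in half of $B$ has arbitrarily small gradient and $\lambda=\frac12$.

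The missing idea is to localize the Poincar\'e inequality itself, by a stopping time on the \emph{density of the positivity set} rather than on the gradient. This is what the paper does.
\begin{itemize}
\item If $\lambda\ge 2^{-n-2}$, Theorem~\ref{thm:improved_poincare} already gives the claim.
\item Otherwise, bisect $B$ dyadically and stop on the first subcubes $Q$ with $|Q\cap\{|u|>0\}|\ge 2^{-n-2}|Q|$. Since the parent of a selected cube was not selected, every cube of the resulting partition satisfies $|Q\cap\{u=0\}|\ge\frac34|Q|$. Hence Theorem~\ref{thm:improved_poincare} applies on each such $Q$ without subtracting a mean.
\item Writing $|u|/r_B=(|u|/r_Q)(r_Q/r_B)$ puts the weight $\frac{|Q|}{|B|}(r_Q/r_B)^{sp^-}=(|Q|/|B|)^{1+sp^-/n}$ in front of each error $|Q|^{\epsilon s}$.
\item The selected cubes satisfy $\sum|Q|\le 2^{n+2}|B\cap\{|u|>0\}|$. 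Superadditivity then gives the total error $|B|^{\epsilon s}\lambda^{1+sp^-/n}\le|B|^{\epsilon s}\lambda^{s(1+1/n)}$, and this last inequality is exactly where $s\le n/(n+1-p^-)$ enters.
\item The gradient terms are summed using $s^2\ge 1$ and $1+sp^-/n-s^2\ge 0$.
\item The never-selected cubes of generation $m$ carry factors $2^{-msp^-}$ and $2^{-m(sp^-+n(1-s^2))}$, which vanish as $m\to\infty$.
\end{itemize}
So the factor $\lambda^{1/n}$ comes from applying Poincar\'e at the scale $r_Q\lesssim\lambda^{1/n}r_B$ of the stopping cubes, not from the tail of the dyadic potential on $B$.
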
 
\begin{proof}	
Let us consider two cases. The first is when 
\begin{align}\label{eq:stop_1}
	\frac{\abs{B\cap\{\abs{u}>0\}}}{\abs{B}}\ge \frac{1}{2^{n+2}},
\end{align}
and the second is when 
\begin{align}\label{eq:stop_2}
	\frac{\abs{B\cap\{\abs{u}>0\}}}{\abs{B}}< \frac{1}{2^{n+2}}.
\end{align}
Note that if \eqref{eq:stop_1} holds, then the desired result directly follows from Theorem~\ref{thm:improved_poincare}. The interesting case is when we have \eqref{eq:stop_2}. In this case, we proceed as follows.
 We start by bisecting the cube $B$ into $2^n$ congruent subcubes. If a dyadic subcube $Q$ satisfies
  \begin{align*}
    \frac{|Q\cap\{\abs{u}>0\}|}{|Q|}\ge \frac{1}{2^{n+2}},
  \end{align*}
  we select and stop on it. If not, we bisect it into $2^{n}$ dyadic children and continue recursively. Let $S_k$ be the family of cubes selected at step $k$, and let $R_k$ be the family of cubes which remain unselected after $k$ steps. For each positive integer $m$ ($m\in\mathbb{N}$) we define $\mathcal{T}_m$ as the union of these disjoint cubes, i.e.,
  \begin{align*}
    \mathcal T_m:=\Big(\bigcup_{k=1}^m S_k\Big)\cup R_m.
  \end{align*} It is clear that $\mathcal{T}_m$ is a partition of $B$ (up to null sets) for each $m$. If $Q\in R_m$, then by construction
  \begin{align*}
    \frac{|Q\cap\{\abs{u}>0\}|}{|Q|}<\frac{1}{2^{n+2}},
  \end{align*}
  and $|Q\cap\{u=0\}|\geq(1-\frac{1}{2^{n+2}})|Q|\eqsim |Q|.$	
  If $Q\in S_k$, then $Q$ is selected, so $$\frac{|Q\cap\{\abs{u}>0\}|}{|Q|}\geq \frac{1}{2^{n+2}}.$$ On the other hand, its parent was not selected. Let us denote it by $\widetilde{Q}$, which means that $\frac{|\widetilde Q\cap\{\abs{u}>0\}|}{|\widetilde Q|}<\frac{1}{2^{n+2}}.$
  Since $|\widetilde Q|=2^n|Q|$ and $Q\subset \widetilde Q$, we obtain
  \begin{align*}
    \frac{|Q\cap\{\abs{u}>0\}|}{|Q|}\leq\frac{|\widetilde Q\cap\{\abs{u}>0\}|}{|Q|}=2^n\frac{|\widetilde Q\cap\{\abs{u}>0\}|}{|\widetilde Q|}<\frac{1}{4}.
  \end{align*}
  Therefore
  \begin{align*}
    |Q\cap\{u=0\}|\geq \tfrac{3}{4}|Q|.
  \end{align*}
  Thus every cube $Q\in\mathcal T_m$ satisfies $|Q\cap\{u=0\}|\eqsim|Q|.$ As we already mentioned, this allows us to use \eqref{eq:improved_poincare} for every $ Q\in\mathcal{T}_m$ without subtracting the average on the left-hand side.
  It may happen that the union $\bigcup_{k=1}^m S_k$ is empty for some $m$, in which case below we use the standard convention that a sum over an empty index set is zero. Let us denote $\delta:=sp^-+n(1-s^2).$ Then for each $m\in\mathbb{N}$ we have the following:
  \begin{align*}
    \lefteqn{\dashint\nolimits_B \Big(\frac{|u|}{r_B}\Big)^{sp(x)}\,dx}\qquad&
    \\
    &=\sum_{Q\in\mathcal T_m}\frac{|Q|}{|B|}\dashint\nolimits_Q\Big(\frac{|u|}{r_Q}\Big)^{sp(x)}\Big(\frac{r_Q}{r_B}\Big)^{sp(x)}\,dx
    \\
    &\lesssim\sum_{Q\in \cup_{k=1}^m S_k}\frac{|Q|}{|B|}\Big(\frac{r_Q}{r_B}\Big)^{sp^-}
      \!\!\dashint\nolimits_Q \Big(\frac{|u|}{r_Q}\Big)^{sp(x)}\,dx
      +\sum_{Q\in R_m}\frac{|Q|}{|B|}\Big(\frac{r_Q}{r_B}\Big)^{sp^-}\!\!\dashint\nolimits_Q \Big(\frac{|u|}{r_Q}\Big)^{sp(x)}\,dx
    \\
    &\lesssim\sum_{Q\in \cup_{k=1}^m S_k}\frac{|Q|}{|B|}\Big(\frac{r_Q}{r_B}\Big)^{sp^-}\Bigg(\bigg(\dashint\nolimits_Q \abs{\nabla u}^{p(x)/s}\,dx\bigg)^{s^2}+|Q|^{\varepsilon s}\Bigg)
    \\
    &\quad +2^{-msp^-}\sum_{Q\in R_m}\frac{|Q|}{|B|}\Bigg(\bigg(\dashint\nolimits_Q \abs{\nabla u}^{p(x)/s}\,dx\bigg)^{s^2}+|Q|^{\epsilon s}\Bigg)
    \\
    &\lesssim|B|^{-1-\frac{sp^-}{n}} \!\!\!\!\!\sum_{Q\in \cup_{k=1}^m S_k} \!\!\!\!\!|Q|^{1+\frac{sp^-}{n}-s^2}\bigg(\int\nolimits_Q \abs{\nabla u}^{p(x)/s}\,dx\bigg)^{s^2}
      \!\!+ \!\!\!\sum_{Q\in \cup_{k=1}^m  S_k} \!\!\!\!\frac{|Q|}{|B|}\Big(\frac{r_Q}{r_B}\Big)^{sp^-}|Q|^{\epsilon s}
    \\
    &\quad+2^{-msp^-}\sum_{Q\in R_m}\frac{|Q|}{|B|}\bigg(\dashint\nolimits_Q \abs{\nabla u}^{p(x)/s}\,dx\bigg)^{s^2}
      +2^{-msp^-}\sum_{Q\in R_m}\frac{|Q|}{|B|}|Q|^{\varepsilon s}
    \\
    &\leq|B|^{-s^2}\sum_{Q\in \cup_{k=1}^m S_k}\bigg(\int\nolimits_Q \abs{\nabla u}^{p(x)/s}\,dx\bigg)^{s^2}+\abs{B}^{\epsilon s}\left(\sum_{Q\in \cup_{k=1}^m S_k}\Big(\frac{|Q|}{|B|}\Big)^{1+\frac{sp^-}{n}}\right)
    \\
    &\quad +2^{-msp^-}|B|^{-1}(2^{-mn}|B|)^{1-s^2}
      \sum_{Q\in R_m}\bigg(\int\nolimits_Q \abs{\nabla u}^{p(x)/s}\,dx\bigg)^{s^2}+2^{-msp^-}
    \\
    &\lesssim \bigg(\fint\nolimits_B \abs{\nabla u}^{p(x)/s}\,dx\bigg)^{s^2}+ \abs{B}^{\epsilon s} \bigg(\sum_{Q\in \cup_{k=1}^m S_k}\frac{|Q|}{|B|}\bigg)^{1+\frac{sp^-}{n}}
    \\
      &\quad+2^{-m\delta}\bigg(\fint\nolimits_B \abs{\nabla u}^{p(x)/s}\,dx\bigg)^{s^2}+2^{-msp^-}
    \\
    &\lesssim\left(\dashint\nolimits_B |\nabla u|^{p(x)/s}\,dx\right)^{s^2}+\abs{B}^{\epsilon s} \left(\frac{|B\cap\{\abs{u}>0\}|}{|B|}\right)^{s(1+\frac1n)}
    \\
     &\quad+2^{-m\delta}\bigg(\dashint\nolimits_B \abs{\nabla u}^{p(x)/s}\,dx\bigg)^{s^2}+2^{-msp^-}.
  \end{align*}
Letting $m \to \infty$ gives \eqref{eq:stopping_argument} in the case $s_1 = s_2 = s$. Having obtained the same powers, the desired result follows by using classical Jensen's inequality.
\end{proof}
Using the zero-extension technique, we can obtain the following:

\begin{corollary}\label{cor:zero_boundary_value}
Let $p$, $s$, and $\epsilon$ be as in Lemma~\ref{lem:stop}. Then for all cubes~$B$ with~$\abs{B}\leq1$, all $s_1, s_2 \in [1, s]$, and all $u\in W_0^{1,1}(B)$ with $\norm{\nabla u}_{\px/s_2} \leq 1$, the following holds:
	\begin{equation}\label{eq:zero_boundary_coro}
		\left( \dashint\nolimits_{B} \left(\frac{\abs{u}}{r_B}\right)^{s_1 p(x)}\,dx \right)^{\frac{1}{s_1}} \lesssim \left(\dashint\nolimits_{B} \abs{\nabla u}^{p(x)/s_2}\,dx \right)^{s_2} + \abs{B}^\epsilon \left(\frac{\abs{B \cap \{\abs{u}>0\}}}{\abs{B}}\right)^{1+\frac{1}{n}}.
	\end{equation}
\end{corollary}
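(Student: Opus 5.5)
We reduce the corollary to Lemma~\ref{lem:stop} by extending $u$ by zero to a concentric enlarged cube. Let $B$ be a cube with $\abs{B}\le 1$ and $u\in W^{1,1}_0(B)$ with $\norm{\nabla u}_{L^{\px/s_2}(B)}\le 1$. Let $\widetilde B:=3B$ be the concentric cube with side length $3r_B$, and let $\tilde u$ be the zero extension of $u$ to $\widetilde B$. Since $u\in W^{1,1}_0(B)$, we have $\tilde u\in W^{1,1}(\widetilde B)$ with $\nabla\tilde u=\indicator_B\nabla u$ a.e. In particular,
\begin{align*}
\norm{\nabla \tilde u}_{L^{\px/s_2}(\widetilde B)}=\norm{\nabla u}_{L^{\px/s_2}(B)}\le 1 .
\end{align*}
Moreover, $\tilde u$ vanishes on $E:=\widetilde B\setminus B$, and $\abs{E}=(1-3^{-n})\abs{\widetilde B}\eqsim\abs{\widetilde B}$. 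So all hypotheses of Lemma~\ref{lem:stop} hold on $\widetilde B$, except possibly $\abs{\widetilde B}\le 1$, since $\abs{\widetilde B}=3^n\abs{B}\le 3^n$. This is handled by Remark~\ref{rem:normilized_cubes}: the size bound $1$ may be replaced by the fixed constant $3^n$, at the cost of constants depending only on $n$ in addition to the structural data. Lemma~\ref{lem:stop}, applied to $\tilde u$ on $\widetilde B$, then gives
\begin{align*}
\Bigg(\dashint\nolimits_{\widetilde B}\bigg(\frac{\abs{\tilde u}}{r_{\widetilde B}}\bigg)^{s_1p(x)}dx\Bigg)^{\frac1{s_1}}\lesssim\bigg(\dashint\nolimits_{\widetilde B}\abs{\nabla\tilde u}^{p(x)/s_2}dx\bigg)^{s_2}+\abs{\widetilde B}^{\epsilon}\bigg(\frac{\abs{\widetilde B\cap\{\abs{\tilde u}>0\}}}{\abs{\widetilde B}}\bigg)^{1+\frac1n}.
\end{align*}

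Next we transfer each term back to $B$, using that $\tilde u$ and $\nabla\tilde u$ vanish outside $B$.

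\textbf{Left-hand side.} We have $r_{\widetilde B}=3r_B$, so $(r_B/r_{\widetilde B})^{s_1p(x)}\ge 3^{-s_1p^+}$, which is bounded below uniformly. Also $\dashint_{\widetilde B}=3^{-n}\int_B\cdot\,dx/\abs{B}$. Hence the left-hand side dominates, up to a constant, the left-hand side of \eqref{eq:zero_boundary_coro}.

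\textbf{Gradient term.} It equals $\bigl(3^{-n}\dashint_B\abs{\nabla u}^{p(x)/s_2}dx\bigr)^{s_2}\le\bigl(\dashint_B\abs{\nabla u}^{p(x)/s_2}dx\bigr)^{s_2}$.

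\textbf{Error term.} The factor $\abs{\widetilde B}^\epsilon=3^{n\epsilon}\abs{B}^\epsilon$ is comparable to $\abs{B}^\epsilon$. The density satisfies $\abs{\widetilde B\cap\{\abs{\tilde u}>0\}}=\abs{B\cap\{\abs u>0\}}$, so the ratio is $3^{-n}$ times the corresponding ratio on $B$. Raising to the power $1+\frac1n$ only improves the constant.

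Combining the three bounds yields \eqref{eq:zero_boundary_coro}, with $s$ and $\epsilon$ as in Lemma~\ref{lem:stop} and constants depending on the same quantities.

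The only delicate point is justifying the application of Lemma~\ref{lem:stop} to the enlarged cube $3B$, whose measure may exceed $1$. Remark~\ref{rem:normilized_cubes} covers this directly: the generalized Jensen inequality and the Sobolev--Poincar\'e estimates hold for all cubes of measure at most a fixed $L$. Alternatively, one could first prove the claim for $\abs{B}\le 3^{-n}$ and handle $3^{-n}<\abs{B}\le1$ by the same remark.

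It must also be checked that the zero-extension really lies in $W^{1,1}(3B)$ with $\nabla\tilde u=\indicator_B\nabla u$. This is the standard property of $W^{1,1}_0$, obtained by approximating with $C_c^\infty(B)$ functions. The norm condition on $\nabla\tilde u$ is unaffected, since the modular only sees $B$.
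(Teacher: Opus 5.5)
Your proposal is correct and follows the paper's route: extend $u$ by zero to an enlarged concentric cube, so that $\tilde u$ vanishes on a set of comparable measure, and apply Lemma~\ref{lem:stop} there. The enlarged cube has measure up to $3^n$, which you correctly handle via Remark~\ref{rem:normilized_cubes}; you have simply written out the details that the paper compresses into the phrase ``zero-extension technique.''
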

\begin{remark}\label{rem:smallness_of_norms}
  Note that Theorems \ref{thm:generalized-jensen}, \ref{thm:improved-jensen}, and \ref{thm:improved_poincare}, Lemma \ref{lem:stop}, and Corollary \ref{cor:zero_boundary_value} all remain valid if, in the corresponding smallness assumptions on the norms and the cube (or ball), one replaces $1$ with any constant $L$, compare with Remark~\ref{rem:normilized_cubes}. In this case, the constants in the corresponding estimates will also depend on $L$. Moreover, all these results remain true for both cubes and balls.
\end{remark}
\section{Regularity Results}
\label{sec:Regularities}
A function $u \in W^{1,p(\cdot)}(\Omega)$ is a weak solution of \eqref{eq:px-laplacian_non_zero} if 
\begin{equation}\label{eq:weak_form}
	\int\nolimits_{\Omega} \abs{\nabla u}^{p(x)-2}\nabla u \cdot \nabla \psi \, dx = \int\nolimits_{\Omega} \abs{G}^{p(x)-2}G \cdot \nabla \psi \, dx \quad \forall \psi \in W_0^{1,p(\cdot)}(\Omega).
\end{equation}
Furthermore, $u$ is a weak subsolution (resp. supersolution) of \eqref{eq:px-laplacian_non_zero} if \eqref{eq:weak_form} holds with $\le$ (resp. $\ge$) in place of $=$ for all non-negative $\psi \in W_0^{1,p(\cdot)}(\Omega)$. 
In the case of a zero right-hand side, $u$ is a weak solution of \eqref{eq:px-laplacian_zero} if
\begin{equation}\label{eq:weak_form_zero}
	\int\nolimits_{\Omega} \abs{\nabla u}^{p(x)-2}\nabla u \cdot \nabla \psi \, dx = 0 \quad \forall \psi \in W_0^{1,p(\cdot)}(\Omega),
\end{equation}
and subsolutions (resp. supersolutions) of \eqref{eq:px-laplacian_zero} are defined analogously. Let us define $u_\lambda(x) \coloneq (u(x) - \lambda)_+ \coloneq \max\{u(x) - \lambda, 0\}$ for $\lambda\in \mathbb{R}$.
\label{subsec:higher_integrability}
\subsection{Higher Integrability}
Before we formulate the higher integrability of the gradient, we introduce some well-known results. The following Caccioppoli inequality and Gehring-type lemma can be found in \cite[Lemma 3.1]{DieSch14} and \cite[Proposition 6]{Dieningettwein}, respectively.

\begin{lemma}\label{lem:classical_caccio}
	Let $Q \subset \Omega$ be a cube with sidelength $R$ and $1<p^-\leq p^+ < \infty$. Then the weak solution $u$ of \eqref{eq:px-laplacian_non_zero} satisfies
	\begin{align*}
		\int\nolimits_{Q} \abs{\nabla u}^{p(x)} \,dx \lesssim \bigg( \int\nolimits_{2Q} \left| \frac{u - \mean{u}_{2Q}}{R} \right|^{p(x)} \,dx + \int\nolimits_{2Q} |G|^{p(x)} \,dx \bigg)
	\end{align*}
	for all $2Q\Subset\Omega$.
\end{lemma}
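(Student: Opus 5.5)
The plan is the standard energy argument: test the weak formulation \eqref{eq:weak_form} with $\psi = \eta^{p^+}(u - \mean{u}_{2Q})$. Here $\eta \in C_0^\infty(2Q)$ is a cut-off with $0 \le \eta \le 1$, $\eta = 1$ on $Q$ and $\abs{\nabla \eta} \lesssim 1/R$. Since $2Q \Subset \Omega$ and $u \in W^{1,\px}(\Omega)$, this $\psi$ lies in $W^{1,\px}_0(\Omega)$ and is admissible. Expanding $\nabla \psi = \eta^{p^+}\nabla u + p^+ \eta^{p^+-1}(u-\mean{u}_{2Q})\nabla\eta$ gives
\begin{align*}
  \int\nolimits_{2Q} \eta^{p^+} \abs{\nabla u}^{p(x)}\,dx
  &\le p^+ \int\nolimits_{2Q} \eta^{p^+-1}\abs{\nabla u}^{p(x)-1} \abs{u - \mean{u}_{2Q}}\abs{\nabla \eta}\,dx
  \\
  &\quad + \int\nolimits_{2Q} \eta^{p^+}\abs{G}^{p(x)-1}\abs{\nabla u}\,dx
  \\
  &\quad + p^+\int\nolimits_{2Q}\eta^{p^+-1}\abs{G}^{p(x)-1}\abs{u-\mean{u}_{2Q}}\abs{\nabla\eta}\,dx .
\end{align*}

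Each term on the right is then handled by the pointwise Young inequality with exponent $p(x)$ and a small parameter $\delta\in(0,1)$:
\begin{align*}
  a^{p(x)-1} b \le \delta a^{p(x)} + c(\delta,p^-,p^+)\, b^{p(x)} .
\end{align*}
Because $p^-\le p(x)\le p^+$ uniformly, the constant $c(\delta)$ does not depend on $x$. This is the only place the exponent enters, so no regularity of $p$ (neither $\log$-H\"older nor $\mathcal{A}$) is needed.

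In the first term choose $a = \eta^{p^+/p'(x)}\abs{\nabla u}$. The leftover factor is $\eta^{p^+-1-p^+/p'(x)} = \eta^{p^+/p(x)-1}$, which is only a negative power when $p(x)<p^+$; this is the point to check carefully. To avoid it, bound $\eta^{p^+-1} \le \eta^{p^+(p(x)-1)/p(x)}$, valid since $p^+-1 \ge p^+(p(x)-1)/p(x) = p^+ - p^+/p(x)$ and $\eta\le1$. Then set $b = \abs{u-\mean{u}_{2Q}}\abs{\nabla\eta}$. The first term is bounded by $\delta\int \eta^{p^+}\abs{\nabla u}^{p(x)} + c(\delta)\int_{2Q}\abs{(u-\mean{u}_{2Q})/R}^{p(x)}$.

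The second term is treated the same way with the roles of $\abs{G}$ and $\abs{\nabla u}$ exchanged. The third term is bounded directly by $\int_{2Q}\abs{G}^{p(x)} + \int_{2Q}\abs{(u-\mean{u}_{2Q})/R}^{p(x)}$.

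Finally, take $\delta$ small (say $\delta = 1/4$) and absorb the $\delta$-terms into the left-hand side. The left-hand side is finite because $u\in W^{1,\px}$, so the absorption is legitimate. Using $\eta = 1$ on $Q$ then yields the claimed estimate, with constants depending only on $p^-,p^+$. The only step requiring care is the bookkeeping of the powers of $\eta$ in the Young splitting; everything else is routine.
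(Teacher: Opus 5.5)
Your proposal is correct and is the standard energy argument. The paper does not prove this lemma itself but cites \cite[Lemma 3.1]{DieSch14}. Its own Caccioppoli inequality (Lemma~\ref{lem:Caccio_px}) uses exactly your scheme: test with $\eta^{p^+}$ times the shifted function, apply Young's inequality with constants uniform in $p^-\le p(x)\le p^+$, and absorb.

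The only issues are cosmetic bookkeeping slips:
\begin{itemize}
\item The choice should be $a=\eta^{p^+/p(x)}\abs{\nabla u}$, so that $a^{p(x)}$ is the left-hand integrand.
\item The leftover factor $\eta^{p^+/p(x)-1}$ is never a negative power, since $p(x)\le p^+$. Your bound $\eta^{p^+-1}\le\eta^{p^+(p(x)-1)/p(x)}$ is therefore not a fix but this same observation.
\item $\delta$ must be small relative to the prefactors (e.g.\ $(p^++1)\delta\le\frac12$) rather than simply $\delta=1/4$.
\end{itemize}
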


\begin{theorem}[Giaquinta--Modica]\label{thm:classical_Gehring} 
Let $Q_0 \subset \mathbb{R}^n$ be a cube, $U \in L^1(Q_0)$, and $H \in L^{q_0}(Q_0)$ for some $q_0 > 1$. Suppose that for some $\theta \in (0, 1)$, $c_1 > 0$, and all cubes $Q$ with $2Q \subset Q_0$
\begin{align*}
\dashint\nolimits_Q |U| \, dx \le c_1 \bigg( \dashint\nolimits_{2Q} |U|^\theta \, dx \bigg)^{1/\theta} + \dashint\nolimits_{2Q} |H| \, dx.
\end{align*}
Then there exist $q_1 > 1$ and $c_2 > 1$ such that $U \in L_{\mathrm{loc}}^{q_1}(Q_0)$ and for all $q_2 \in[1, q_1]$
\begin{align*}
  \bigg(\dashint\nolimits_Q |U|^{q_2} \, dx \bigg)^{1/q_2} \le c_2 \dashint\nolimits_{2Q} |U| \, dx + c_2 \bigg(\dashint\nolimits_{2Q} |H|^{q_2} \, dx \bigg)^{1/q_2}.
\end{align*}
\end{theorem}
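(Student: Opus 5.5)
The statement is the classical Gehring lemma in the Giaquinta--Modica form, so I will follow the standard Calder\'on--Zygmund/stopping-time route. Two reductions come first. Assume the hypothesis holds on all cubes $Q$ with $2Q\subset Q_0$; if needed, pass to a version with a smaller concentric dilate by a covering argument. Then normalize by working on a fixed cube $Q$ with $2Q\subset Q_0$, and set
\[
\lambda_0 \coloneqq \dashint\nolimits_{2Q}\abs{U}\,dx+\Big(\dashint\nolimits_{2Q}\abs{H}^{q_0}\,dx\Big)^{1/q_0}.
\]
The target is a reverse-H\"older estimate for $U$ on $Q$ at some exponent $q_1>1$.

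\textbf{Step 1 (level-set estimate).} For $\lambda\ge c\lambda_0$, run a Calder\'on--Zygmund stopping time on dyadic subcubes of a cube between $Q$ and $2Q$ (after a fixed rescaling). Select the maximal cubes $Q_j$ with
\[
\dashint\nolimits_{Q_j}\abs{U}\,dx+\dashint\nolimits_{Q_j}\abs{H}\,dx>\lambda .
\]
By maximality the parents have averages at most $\lambda$, so the averages over $2Q_j$ are $\lesssim\lambda$; the doubled cubes stay inside $2Q\subset Q_0$ because $\lambda$ is large relative to $\lambda_0$. Applying the hypothesis on each $Q_j$ and splitting $\abs{U}^\theta$ at height $\eta\lambda$, for a small $\eta>0$, gives
\[
\lambda\abs{Q_j}\lesssim c_1\Big(\eta^{\theta}\lambda^{\theta}\abs{Q_j}+\int\nolimits_{2Q_j\cap\{\abs{U}>\eta\lambda\}}\abs{U}^\theta\Big)^{1/\theta}\abs{Q_j}^{1-1/\theta}+\int\nolimits_{2Q_j}\abs{H}.
\]
Choosing $\eta$ small absorbs the first term. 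Using $\abs{U}^{\theta}\le\abs{U}(\eta\lambda)^{\theta-1}$ on $\{\abs{U}>\eta\lambda\}$ and summing over the (boundedly overlapping) $2Q_j$, I obtain the standard distributional inequality
\[
\int\nolimits_{\{\abs{U}>\lambda\}\cap Q}\abs{U}\,dx\lesssim \int\nolimits_{\{\abs{U}>\eta\lambda\}\cap 2Q}\abs{U}\,dx+\int\nolimits_{\{\abs{H}>\eta\lambda\}\cap 2Q}\abs{H}\,dx .
\]
Here the Lebesgue differentiation theorem gives $\abs{U}\le\lambda$ a.e.\ outside $\bigcup_j Q_j$.

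\textbf{Step 2 (integration in $\lambda$).} Multiply the distributional inequality by $\lambda^{q-2}$, where $q=q_2$, and integrate over $\lambda\in(c\lambda_0,\infty)$. By Fubini the left side becomes $\frac{1}{q-1}\int\abs{U}^{q}$ restricted to the relevant level set, and the right side becomes $C\eta^{1-q}\frac{1}{q-1}$ times the corresponding integral over $2Q$, plus an $H$-term of the same form. The first right-hand term is not directly comparable to the left, because the domains differ ($Q$ versus $2Q$). The standard fix is to use a family of nested cubes $Q\subset Q_s\subset Q_t\subset 2Q$ and prove
\[
\Phi(s)\le\vartheta\,\Phi(t)+\frac{C}{(t-s)^{N}}\Big(\lambda_0^{q}+\int\nolimits_{2Q}\abs{H}^q\Big),
\qquad \Phi(r)\coloneqq\int\nolimits_{Q_r}\abs{U}^q ,
\]
with a constant $\vartheta<1$. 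To get $\vartheta<1$, take $q-1$ small: the constant $C\eta^{1-q}(q-1)$ in front of the $U$-term tends to $0$ as $q\to1$ (more precisely, one isolates the $q-1$ factor coming from $\int\lambda^{q-2}\,d\lambda$). This fixes $q_1$.

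\textbf{Step 3 (absorption and conclusion).} To justify absorbing $\Phi$, I first work with the truncation $\min\{\abs{U},k\}$ so that all quantities are finite; the hypothesis is preserved for these truncations up to routine changes. The standard iteration lemma then removes $\vartheta\Phi(t)$, and letting $k\to\infty$ gives
\[
\Big(\dashint\nolimits_Q\abs{U}^{q_2}\Big)^{1/q_2}\lesssim\dashint\nolimits_{2Q}\abs{U}+\Big(\dashint\nolimits_{2Q}\abs{H}^{q_2}\Big)^{1/q_2}
\]
for all $q_2\in[1,q_1]$, with $q_1\le q_0$.

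The main obstacle is Step 2: keeping the absorbed constant below $1$ while the cubes on the two sides differ. I would handle it through the nested-cube iteration lemma together with the truncation argument, which is exactly the classical Giaquinta--Modica bookkeeping.
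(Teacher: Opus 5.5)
The paper gives no proof of this lemma (it is quoted from Diening--Ettwein), so your sketch has to stand on its own. Its architecture (stopping time, level-set inequality, integration in $\lambda$, nested cubes and absorption) is the classical one, but the core step does not work.

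\textbf{The gap.} The distributional inequality you reach at the end of Step~1,
\[
\int_{\{|U|>\lambda\}\cap Q}|U|\,dx\lesssim\int_{\{|U|>\eta\lambda\}\cap 2Q}|U|\,dx+\int_{\{|H|>\eta\lambda\}\cap 2Q}|H|\,dx,
\]
is a tautology. Since $\eta<1$ and $Q\subset 2Q$, it holds with constant $1$ for every $U\in L^1$, so it cannot imply any higher integrability. The information is discarded precisely when you use $|U|^\theta\le|U|(\eta\lambda)^{\theta-1}$. That is the only place where $\theta<1$ could enter, and you convert it back into the $L^1$ quantity.

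Step~2 fails for the same reason. Integrating against $\lambda^{q-2}$ gives $\frac{1}{q-1}\int_Q|U|^q$ on the left and $\frac{C\eta^{1-q}}{q-1}\int_{2Q}|U|^q$ on the right. The factor $\frac1{q-1}$ therefore appears on \emph{both} sides and cancels. The coefficient of the $U$-term is $C\eta^{1-q}$, not $C\eta^{1-q}(q-1)$. It does not tend to $0$ as $q\downarrow1$, and no nested-cube iteration can absorb it.

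\textbf{The fix: keep the gain.} On a stopping cube with $\dashint_{2Q_j}|U|\lesssim\lambda$, set $E_j=2Q_j\cap\{|U|>\eta\lambda\}$ and use
\[
\Big(\frac{1}{|2Q_j|}\int_{E_j}|U|^\theta\Big)^{1/\theta}\le\Big(\dashint_{2Q_j}|U|^\theta\Big)^{\frac1\theta-1}\frac{1}{|2Q_j|}\int_{E_j}|U|^\theta\lesssim\lambda^{1-\theta}\,\frac{1}{|2Q_j|}\int_{E_j}|U|^\theta .
\]
This leads to
\[
\int_{\{|U|>\lambda\}\cap Q}|U|\,dx\lesssim\lambda^{1-\theta}\int_{\{|U|>\eta\lambda\}\cap 2Q}|U|^\theta\,dx+\int_{\{|H|>\eta\lambda\}\cap 2Q}|H|\,dx .
\]
Alternatively, by H\"older, you can use $\lambda\,|\{|U|>\eta\lambda\}\cap 2Q|$ as the first term. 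Integrated against $\lambda^{q-2}$, the right side now produces $\frac{\eta^{\theta-q}}{q-\theta}\int_{2Q}|U|^q$. After multiplying by $q-1$ the coefficient is $C\eta^{\theta-q}\frac{q-1}{q-\theta}$. This does tend to $0$ as $q\downarrow1$ because $q-\theta\ge1-\theta>0$, and it is what fixes $q_1$.

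\textbf{Two smaller points also need repair.}
First, for dyadic stopping cubes, maximality controls only the average over the dyadic parent, and $2Q_j$ is not contained in that parent. Moreover, the doubles of disjoint dyadic cubes need not have bounded overlap. Use instead exit-time radii for centred cubes together with a Vitali covering, or the distance-weighted decomposition of Giaquinta--Modica.

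Second, $\min\{|U|,k\}$ does not inherit the reverse H\"older hypothesis in any routine way. The standard device is to stop the $\lambda$-integration at $k$. This replaces $|U|^q$ by $|U|\min\{|U|,k\}^{q-1}$ on both sides, keeps everything finite for $U\in L^1$, and allows the absorption before letting $k\to\infty$.
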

Note that both results above are true for cubes or balls. 
\begin{theorem}[Local higher integrability]
	\label{thm:local_higher_integrability}
	Let $p \in \mathcal{N} \cap \mathcal{A}$ with $1 < p^- \leq p^+ < \infty$, and let $u \in W^{1,p(\cdot)}(\Omega)$ be a weak solution of \eqref{eq:px-laplacian_non_zero}. Let $G \in L^{p(\cdot)(1+\sigma_0)}_{\loc}(\Omega;\mathbb{R}^n)$ for some $\sigma_0 > 0$. Then there exist $\delta = \delta\bigl(n, p^-, p^+, [p]_{\mathcal A}, [p]_{\mathcal N}, \sigma_0\bigr) \in (0,\sigma_0]$ and $\epsilon = \epsilon\bigl(n, p^-, p^+, [p]_{\mathcal A}, [p]_{\mathcal N}\bigr) > 0$ such that $|\nabla u| \in L^{p(\cdot)(1+\delta)}_{\loc}(\Omega)$.
	Moreover, there exists $C = C\bigl(n, p^-, p^+, [p]_{\mathcal A}, [p]_{\mathcal N}, \sigma_0\bigr) > 0$ such that for every cube $Q$ with $4Q \Subset \Omega$, $|Q|\leq1$, and $\int_{4Q} |\nabla u|^{p(x)} \, dx < 1$, we have
	\begin{align}\label{eq:local_higher}
		\left( \dashint_Q |\nabla u|^{p(x)(1+\delta)} \, dx \right)^{\frac{1}{1+\delta}} 
		\leq C \dashint_{2Q} |\nabla u|^{p(x)} \, dx 
		 + C \left( \dashint_{2Q} |G|^{p(x)(1+\delta)} \, dx \right)^{\frac{1}{1+\delta}} + C|Q|^\epsilon.
	\end{align}
\end{theorem}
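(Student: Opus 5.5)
The plan is to derive a reverse Hölder inequality with an additive error term for $U = |\nabla u|^{p(x)}$, and then apply the Giaquinta--Modica lemma (Theorem~\ref{thm:classical_Gehring}) with the error $|Q|^\epsilon$ absorbed into $H$.

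Fix a cube $Q$ with $2Q\subset 4Q_0$, where $Q_0$ is the cube from the statement; all such cubes have $|2Q|\lesssim 1$. Start from the Caccioppoli inequality, Lemma~\ref{lem:classical_caccio}, written in averaged form:
\begin{align*}
\dashint_{Q} |\nabla u|^{p(x)}\,dx \lesssim \dashint_{2Q}\Big|\frac{u-\mean{u}_{2Q}}{R}\Big|^{p(x)}dx + \dashint_{2Q}|G|^{p(x)}dx .
\end{align*}
Bound the first term with the Sobolev--\Poincare inequality, Theorem~\ref{thm:improved_poincare}, on the cube $2Q$, taking $s_1=1$ and $s_2=s\in(1,s_0]$. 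This needs the normalization $\norm{\nabla u}_{L^{\px/s}(2Q)}\lesssim 1$. Since $\int_{4Q_0}|\nabla u|^{p(x)}dx<1$, we have $\norm{\nabla u}_{L^{\px}(4Q_0)}\le 1$. On the bounded set $4Q_0$, the embedding $L^{\px}\hookrightarrow L^{\px/s}$ holds with a constant depending only on $|4Q_0|\lesssim 1$, so Remark~\ref{rem:smallness_of_norms} lets us use the theorem with some constant $L$. The outcome is
\begin{align*}
\dashint_{Q} |\nabla u|^{p(x)}dx \le c\Big(\dashint_{2Q}\big(|\nabla u|^{p(x)}\big)^{1/s}dx\Big)^{s} + \dashint_{2Q}|G|^{p(x)}dx + c|2Q|^{\epsilon}.
\end{align*}

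This is exactly the hypothesis of Theorem~\ref{thm:classical_Gehring}, with:
\begin{itemize}
\item $U=|\nabla u|^{p(\cdot)}$ and $\theta=1/s$;
\item $H=|G|^{p(\cdot)}+c|Q_0|^{\epsilon}$ on $Q_0'=4Q_0$, using $|2Q|\le|4Q_0|$ (up to the fixed factor $4^{n}$).
\end{itemize}
Because $|Q_0|^\epsilon$ is constant, $H\in L^{1+\sigma_0}(4Q_0)$ whenever $G\in L^{p(\cdot)(1+\sigma_0)}_{\loc}$.

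Gehring then supplies $q_1>1$, and we set $\delta=\min\{q_1-1,\sigma_0\}$. Applying its conclusion with $q_2=1+\delta$ on $Q_0$ (the cube $2Q_0\subset 4Q_0$ plays the role of $2Q$) gives the stated estimate. The constant part of $H$ contributes $\big(\dashint_{2Q_0}(c|Q_0|^\epsilon)^{1+\delta}\big)^{1/(1+\delta)}\eqsim|Q_0|^\epsilon$, and the $G$ part is handled by the triangle inequality in $L^{1+\delta}$. The qualitative statement $|\nabla u|\in L^{p(\cdot)(1+\delta)}_{\loc}$ follows by covering: around any point take a small cube $Q_0$ with $4Q_0\Subset\Omega$ and $\int_{4Q_0}|\nabla u|^{p}<1$, which is possible by absolute continuity of the integral.

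The main obstacle is making sure the Gehring lemma tolerates the error term $|2Q|^\epsilon$. The key observation is that the error only needs to be uniformly bounded on subcubes of $4Q_0$, since $|2Q|^\epsilon\le c|Q_0|^\epsilon$, so it can be put into $H$ as a constant. A second point to check is that $\theta,c_1$ and $\delta$ depend only on the structural data and not on $Q_0$: this holds because $s$ and the Poincaré constant are uniform under $|2Q|\lesssim 1$ and the norm bound $L$.
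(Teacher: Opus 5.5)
Your proposal is correct and follows the paper's proof essentially step for step. The argument is the same: Caccioppoli on $2\tilde Q$, then the Sobolev--Poincar\'e inequality of Theorem~\ref{thm:improved_poincare} to get a reverse H\"older inequality with additive error $c|Q_0|^\epsilon$, then Giaquinta--Modica on $4Q_0$ with $\theta=1/s$ and $H=|G|^{p(\cdot)}+c|Q_0|^\epsilon$; your choice $s_1=1$ instead of the paper's $s_1=s_2=s$, and your explicit remarks on the normalization, on $\delta=\min\{q_1-1,\sigma_0\}$, and on Minkowski for the constant part of $H$, only spell out details the paper leaves implicit.
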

\begin{proof}
	Let $s_0$ and $\epsilon$ be the same as in Theorem~\ref{thm:improved_poincare}. Consider $s\in (1,s_0]$.
	We use Theorem~\ref{thm:classical_Gehring} for $Q_0\coloneq4Q$. Let us now consider any $\tilde{Q}$ such that $2\tilde{Q}\subset Q_0$.
	By the Caccioppoli inequality (Lemma~\ref{lem:classical_caccio}),
	\begin{align*}
		\dashint_{\tilde{Q}} |\nabla u|^{p(x)}\,dx 
		&\leq C\dashint_{2\tilde{Q}} \left( \frac{|u-\mean{u}_{2\tilde{Q}}|}{\ell(\tilde{Q})} \right)^{p(x)}\,dx + C\dashint_{2\tilde{Q}}|G|^{p(x)}\,dx.
	\end{align*}
Now,
\begin{align*}
	\int_{4Q} \abs{\nabla u}^{p(x)}\,dx &< 1
\end{align*}
and $\abs{Q} \le 1$ imply
\begin{align*}
	\int_{2\widetilde{Q}} \abs{\nabla u}^{p(x)/s}\,dx &\lesssim 1.
\end{align*}
Remark~\ref{rem:smallness_of_norms} together with Theorem~\ref{thm:improved_poincare} for $s_1=s_2=s$ yields
\begin{align*}
	\dashint_{2\widetilde{Q}} \left( \frac{\abs{u-\mean{u}_{2\widetilde{Q}}}}{r_{2\widetilde{Q}}} \right)^{p(x)}\,dx 
	&\le C\left( \dashint_{2\widetilde{Q}} \abs{\nabla u}^{p(x)/s}\,dx \right)^s + C\abs{2\widetilde{Q}}^\epsilon.
\end{align*}
	Consequently,
	\begin{equation}
		\label{eq:HI_reverse_holder}
		\dashint_{\tilde{Q}}|\nabla u|^{p(x)}\,dx \le C\left( \dashint_{2\tilde{Q}}|\nabla u|^{p(x)/s}\,dx \right)^s + C\dashint_{2\tilde{Q}}|G|^{p(x)}\,dx + C|Q_0|^\epsilon.
	\end{equation}
	Using Theorem~\ref{thm:classical_Gehring} for $U\coloneq|\nabla u|^{p(x)}$, $H(x)\coloneq\bigl(|G(x)|^{p(x)}+|Q_0|^\epsilon\bigr)$ and $\theta=\frac{1}{s}$
	gives that there exists $\delta>0$ such that
	\begin{align*}
		\left( \dashint_{\tilde{Q}}|\nabla u|^{p(x)(1+\delta)}\,dx \right)^{\frac1{1+\delta}} 
		&\le C\dashint_{2\tilde{Q}}|\nabla u|^{p(x)}\,dx \\
		&\quad + C\left( \dashint_{2\tilde{Q}}|G|^{p(x)(1+\delta)}\,dx \right)^{\frac1{1+\delta}} + C|Q_0|^\epsilon.
	\end{align*}
	Setting $\tilde{Q}=Q$ proves \eqref{eq:local_higher}. Then a standard covering argument implies the desired result.
\end{proof}

\subsection{Local Boundedness}\label{subsec:Local_Bound}
 Using the tools that we developed, we obtain the following $L^\infty$ estimate. 
 \begin{theorem}[Local boundedness]\label{thm:local_bound}
Let $p \in \mathcal{A} \cap \mathcal{N}$ with $1 < p^- \leq p^+ < \infty,$ and let $u \in W^{1,p(\cdot)}(\Omega)$ be a weak solution of \eqref{eq:px-laplacian_zero}.
Let $B = B_R(x_0)$ be a ball such that $2B \Subset \Omega, \quad |B|\leq1,$ and assume that $\int_{2B} |\nabla u(y)|^{p(y)} \, dy \leq 1.$ Then there exists $c = c\bigl(n, p^-, p^+, [p]_{\mathcal{A}}, [p]_{\mathcal{N}}\bigr) > 0$ such that
\begin{align}
\label{eq:loc_bound_formula}
\frac{\|u - \mean{u}_{2B}\|_{L^\infty(B)}}{R}\leq c \left(\dashint_{2B}\left|\frac{u - \mean{u}_{2B}}{R}\right|^{p(x)}\, dx+ 1\right)^{1/p^-}.
\end{align}
\end{theorem}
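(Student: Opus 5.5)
We run a De Giorgi iteration on the truncations $u_\lambda=(u-\lambda)_+$, with $u$ replaced by $v:=(u-\mean{u}_{2B})/R$ after rescaling; the bound for $-u$ is identical. Set $B_j$ to be balls of radius $r_j=R(1+2^{-j})$, shrinking from $2B$ to $B$, and levels $k_j=k(1-2^{-j-1})$ with $k\ge 1$ to be chosen. The quantities tracked are
\[
Y_j:=\dashint_{B_j}\Big(\frac{(u-\mean{u}_{2B}-k_jR)_+}{R}\Big)^{p(x)}dx, \qquad A_j:=\frac{\abs{B_j\cap\{u-\mean{u}_{2B}>k_jR\}}}{\abs{B_j}} .
\]
The aim is a recursion of the form $Y_{j+1}\lesssim b^{j}\,(Y_j^{1+\kappa}k^{-\kappa p^-}+\dots)$ for some $\kappa>0$. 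The factor $k^{-\kappa p^-}$ comes from Chebyshev: $A_{j+1}\le (2^{j+2}/k)^{p^-}Y_j$, using $k_{j+1}-k_j\gtrsim 2^{-j}k$ and $k\ge1$.

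\textbf{Step 1: Caccioppoli for truncations.} Test the weak form \eqref{eq:weak_form_zero} with $\eta^{p^+}(u-\lambda)_+$, where $\eta$ is a cutoff between $B_{j+1}$ and $B_j$ with $\abs{\nabla\eta}\lesssim 2^j/R$. Young's inequality gives
\[
\int_{B_{j+1}}\abs{\nabla u_\lambda}^{p(x)}\lesssim 2^{jp^+}\int_{B_j}\Big(\frac{u_\lambda}{R}\Big)^{p(x)} .
\]
To use Corollary~\ref{cor:zero_boundary_value} we need a function in $W_0^{1,1}$. So we work with $w=\eta u_{k_{j+1}}$ on an intermediate ball $\tilde B_j$ (treated as a cube, or via Remark~\ref{rem:smallness_of_norms}). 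Its gradient is controlled by the same Caccioppoli bound, and its support satisfies $\abs{\tilde B_j\cap\{w>0\}}\le A_{j+1}$ up to constants. The norm condition $\norm{\nabla w}_{\px/s}\lesssim1$ holds with a constant depending on $L$, thanks to $\int_{2B}\abs{\nabla u}^{p}\le1$ and $\abs{B}\le1$ together with Remark~\ref{rem:smallness_of_norms}; the truncation only decreases the gradient, and $\eta u_\lambda/R$ is controlled by the same hypotheses through Theorem~\ref{thm:improved_poincare}.

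\textbf{Step 2: Gain of integrability.} Apply Corollary~\ref{cor:zero_boundary_value} with $s_1=s$, $s_2=1$. This gives
\[
\Big(\dashint\Big(\frac{w}{R}\Big)^{sp(x)}\Big)^{1/s}\lesssim 2^{jp^+}Y_j+\abs{B}^\epsilon A_{j+1}^{1+1/n} .
\]
Then Hölder yields
\[
Y_{j+1}\le\Big(\dashint (w/R)^{sp}\Big)^{1/s}A_{j+1}^{1-1/s}.
\]
Combining this with the Chebyshev bound $A_{j+1}\lesssim 2^{jp^-}k^{-p^-}Y_j$ gives
\[
Y_{j+1}\lesssim b^j\big(k^{-p^-(1-1/s)}Y_j^{2-1/s}+k^{-p^-(2+1/n-1/s)}Y_j^{2+1/n-1/s}\big).
\]
Both exponents exceed $1$. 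This is exactly why the error term must be the improved one $\abs{B}^\epsilon A^{1+1/n}$ rather than $\abs{B}^\epsilon$: a bare additive constant would block the convergence $Y_j\to0$.

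\textbf{Step 3: Iteration and choice of $k$.} The standard fast-geometric-convergence lemma shows $Y_j\to0$ provided $Y_0\le c\,k^{\beta}$ for suitable $\beta>0$. Since $Y_0\le \dashint_{2B}\abs{v}^{p(x)}$, it suffices to take
\[
k\eqsim \Big(\dashint_{2B}\abs{v}^{p(x)}dx+1\Big)^{1/p^-},
\]
after checking that the exponents match. The $+1$ absorbs the requirement $k\ge1$ and the smallness hypotheses. Then $u-\mean{u}_{2B}\le kR$ a.e. on $B$, which is \eqref{eq:loc_bound_formula}.

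\textbf{Main obstacle.} The delicate point is the bookkeeping of homogeneity in Step 2. The variable exponent prevents the clean scaling $k^{p}$, so powers of $k$ must be extracted using $p^-$ and $p^+$ and $k\ge1$. We must also verify that every function to which Corollary~\ref{cor:zero_boundary_value} is applied satisfies the uniform norm bound $\norm{\nabla w}_{\px}\le L$. I expect to handle the first issue by normalizing the levels relative to $R$ and always estimating $(t/k)^{p(x)}\le (t/k)^{p^-}$ when $t\le k$. The second follows from the global smallness assumption $\int_{2B}\abs{\nabla u}^{p}\le1$ together with the Caccioppoli inequality.
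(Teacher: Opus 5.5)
Your argument follows the paper's De Giorgi scheme: truncations $(u-\mean{u}_{2B}-k_jR)_+$ on balls shrinking from $2B$ to $B$, H\"older with exponent $s$, Corollary~\ref{cor:zero_boundary_value} for a cut-off truncation on an intermediate ball, Chebyshev on level sets, and $k^{p^-}\eqsim 1+Y_0$ at the end. The only bookkeeping difference is that you also apply Chebyshev to the H\"older factor $A^{1-1/s}$. This gives the closed recursion $Z_{j+1}\lesssim b^j\bigl(Z_j^{2-1/s}+Z_j^{2+1/n-1/s}\bigr)$ for $Z_j:=k^{-p^-}Y_j$ (using $k\ge 1$). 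The paper instead keeps $A_i^{1-1/s}$ and iterates $W_i=A_i^\beta Y_i$ with $\beta(1+\beta)=1-\frac1s$. Both versions close.

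One step, however, is wrong as written: the claim that $w=\eta u_{k_{j+1}}$ satisfies $\norm{\nabla w}_{\px}\le L$ with $L$ independent of $j$. We have $\nabla w=\eta\nabla u_{k_{j+1}}+u_{k_{j+1}}\nabla\eta$, and $\abs{\nabla\eta}$ is of size $2^j/R$. So Theorem~\ref{thm:improved_poincare} on $2B$ together with $\int_{2B}\abs{\nabla u}^{p(x)}\,dx\le 1$ only yields $\int_{\tilde B_j}\abs{\nabla w}^{p(x)}\,dx\lesssim 2^{jp^+}$. When you say that $\eta u_\lambda/R$ is controlled, you have lost the factor $2^j$ coming from $\nabla\eta$.

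Remark~\ref{rem:smallness_of_norms} does not say how the constant depends on $L$. Applying the Corollary to $w$ itself therefore gives a $j$-dependent constant, which the iteration lemma can absorb only if you prove it grows at most geometrically in $j$. The paper sidesteps this by applying the Corollary to $2^{-j}w$, for which
\[
\int_{\tilde B_j}\abs{\nabla(2^{-j}w)}^{p(x)}\,dx\lesssim\int_{2B}\left|\frac{u-\mean{u}_{2B}}{R}\right|^{p(x)}dx+\int_{2B}\abs{\nabla u}^{p(x)}\,dx\lesssim 1
\]
uniformly in $j$. Undoing the scaling costs only a factor $2^{jp^+}$, which goes into $b^j$.

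There are also two harmless slips. First, the support of $w$ lies in $\tilde B_j$, which is larger than $B_{j+1}$. So the measure term is $\abs{B_j\cap\{u-\mean{u}_{2B}>k_{j+1}R\}}/\abs{B_j}$ rather than $A_{j+1}$, though your Chebyshev bound controls this set anyway. Second, Chebyshev gives $2^{(j+2)p^+}k^{-p^-}Y_j$, not $(2^{j+2}/k)^{p^-}Y_j$.
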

Before proving Theorem~\ref{thm:local_bound}, we prove the Caccioppoli inequality, which is essential for the subsequent $L^\infty$ estimate.
\begin{lemma}[Caccioppoli inequality] \label{lem:Caccio_px}
	Let $B_r \subset B_R \subset \Omega$ be concentric balls. If $u \in W^{1,p(\cdot)}(\Omega)$ is a weak subsolution of \eqref{eq:px-laplacian_zero}, then for any $\lambda \in \RR$,
	\begin{equation*}
		\int\nolimits_{B_r} |\nabla u_\lambda|^{p(x)} \,dx \leq c \int\nolimits_{B_R} \left( \frac{u_\lambda}{R-r} \right)^{p(x)} \, dx,
	\end{equation*}
	where $c = c(n, p^-, p^+)$.
\end{lemma}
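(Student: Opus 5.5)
The plan is the standard test-function argument for subsolutions, with care taken that the constant only depends on $p^\pm$ via Young's inequality with variable exponents. Fix a cutoff $\eta\in C_c^\infty(B_R)$ with $0\le\eta\le1$, $\eta=1$ on $B_r$ and $\abs{\nabla\eta}\le 2/(R-r)$. As test function I use $\psi=\eta^{p^+}u_\lambda$. This is non-negative and lies in $W^{1,\px}_0(B_R)\subset W^{1,\px}_0(\Omega)$, since $u_\lambda\in W^{1,\px}(\Omega)$ with $\nabla u_\lambda=\nabla u\,\indicator_{\{u>\lambda\}}$ and $\eta$ is smooth and compactly supported. Its gradient is $\nabla\psi=\eta^{p^+}\nabla u_\lambda+p^+\eta^{p^+-1}u_\lambda\nabla\eta$.

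Inserting $\psi$ into the subsolution inequality, and using that $\nabla u=\nabla u_\lambda$ on the support of $u_\lambda$, gives
\begin{align*}
\int\nolimits_{B_R}\eta^{p^+}\abs{\nabla u_\lambda}^{p(x)}\,dx\le p^+\int\nolimits_{B_R}\eta^{p^+-1}\abs{\nabla u_\lambda}^{p(x)-1}u_\lambda\abs{\nabla\eta}\,dx .
\end{align*}
The main (mild) obstacle is absorbing the right-hand side pointwise with a variable exponent. I apply Young's inequality pointwise with exponents $p(x)/(p(x)-1)$ and $p(x)$, with parameter $\delta>0$:
\begin{align*}
a^{p(x)-1}b\le\delta\,a^{p(x)}+c(\delta,p^-,p^+)\,b^{p(x)},\qquad a=\eta^{p^+/p(x)}\abs{\nabla u_\lambda},\ b=\eta^{p^+-1-p^+(p(x)-1)/p(x)}u_\lambda\abs{\nabla\eta}.
\end{align*}
The exponent of $\eta$ in $b$ is $p^+/p(x)-1\ge0$, so $b\le u_\lambda\abs{\nabla\eta}$ because $\eta\le1$. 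Note that $c(\delta)$ is uniform in $x$, since $p(x)\in[p^-,p^+]$ with $1<p^-$; this is exactly where the choice of the power $p^+$ in the cutoff matters.

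Choosing $\delta=1/(2p^+)$ and absorbing the $\delta$-term into the left side gives
\begin{align*}
\int\nolimits_{B_R}\eta^{p^+}\abs{\nabla u_\lambda}^{p(x)}\,dx\le c\int\nolimits_{B_R}(u_\lambda\abs{\nabla\eta})^{p(x)}\,dx .
\end{align*}
The absorption is legitimate because all integrals are finite, as $u\in W^{1,\px}$. Finally, $\eta=1$ on $B_r$ and $\abs{\nabla\eta}^{p(x)}\le 2^{p^+}(R-r)^{-p(x)}$, so the claim follows with $c=c(n,p^-,p^+)$. No Muckenhoupt assumption is needed here; the only care point is this uniform Young constant.
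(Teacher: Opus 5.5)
Your proposal is correct and follows the paper's proof: you test with $\psi=\eta^{p^+}u_\lambda$, apply Young's inequality pointwise with exponents $p(x)/(p(x)-1)$ and $p(x)$, and absorb the gradient term. Your splitting $\eta^{p^+-1}=\eta^{p^+(p(x)-1)/p(x)}\,\eta^{p^+/p(x)-1}$ makes explicit a step the paper leaves implicit, and two small points are worth tidying: the power $p^+$ is what makes the leftover exponent $p^+/p(x)-1$ nonnegative, whereas the uniformity of the Young constant comes only from $1<p^-\le p^+<\infty$; and admissibility of $\psi$ only needs $u_\lambda\in W^{1,\px}(B_R)$, since $u_\lambda$ need not lie in $W^{1,\px}(\Omega)$ when $\abs{\Omega}=\infty$ and $\lambda<0$.
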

\begin{proof}
	Let $\eta \in C^\infty_c(B_R)$ be a cutoff function such that $0 \le \eta \le 1$, $\eta \equiv 1$ on $B_r$, and $|\nabla \eta| \le c/(R-r)$. Testing \eqref{eq:weak_form_zero} with $\psi = \eta^{p^+} u_\lambda$ and applying Young's inequality, we have:
	\begin{align*}
		\int\nolimits_{B_R} |\nabla u_\lambda|^{p(x)} \eta^{p^+} \,dx 
		&\le \int\nolimits_{B_R} p^+ \eta^{p^+-1} u_\lambda |\nabla u_\lambda|^{p(x)-1} |\nabla \eta| \, dx \\
		&\le \tfrac{1}{2} \int\nolimits_{B_R} |\nabla u_\lambda|^{p(x)} \eta^{p^+} \, dx + c\int\nolimits_{B_R} \left( \frac{u_\lambda}{R-r} \right)^{p(x)} \, dx.
	\end{align*}
	Absorbing the first term on the right into the left side yields the result. 
\end{proof}

\begin{remark}\label{rmk:neg_trunc}
If $u \in W^{1,p(\cdot)}(\Omega)$ is a weak supersolution of \eqref{eq:px-laplacian_zero}, then Caccioppoli holds with $u_\lambda$ replaced by $(-u)_\lambda$. 
\end{remark}
\begin{lemma}\label{lem:local_bound_future}
Let $u\in W^{1,p(\cdot)}(\Omega)$ be a weak subsolution of \eqref{eq:px-laplacian_zero}, and assume that $p\in\mathcal{A}\cap\mathcal{N}$ with $1<p^-\leq p^+<\infty$. Let $B=B_R(x_0)$ be a ball satisfying $2B\Subset\Omega, \quad \abs{B}\leq1, \quad \int_{2B}\abs{\nabla u(y)}^{p(y)}\,dy\leq 1$. Then there exist positive constants $\beta=\beta(n,p^-,p^+,[p]_{\mathcal{A}},[p]_{\mathcal{N}})$ and $c=c\bigl(n,p^-,p^+,[p]_{\mathcal{A}},[p]_{\mathcal{N}}\bigr)$ such that
	\begin{align}
		\sup_{B_R}\frac{(u-\mean{u}_{2B})_+}{R} \leq c\left[ \left(\frac{|A(0,2R)|}{R^n}\right)^\beta \dashint_{B_{2R}}\left|\frac{(u-\mean{u}_{2B})_+}{R}\right|^{p(x)}\,dx +1 \right]^{1/p^-},
	\end{align}
	where $A(0,2R) := \bigl\{ x\in 2B: u(x)-\mean{u}_{2B}>0 \bigr\}.$
\end{lemma}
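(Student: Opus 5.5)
We run a De Giorgi iteration on shrinking concentric balls. By translation invariance of the equation we replace $u$ by $u-\mean{u}_{2B}$, which is still a subsolution with the same gradient, so we may assume $\mean{u}_{2B}=0$. We also rescale levels by $R$: we work with $v=u/R$ and levels $k R$. Fix a level $k>0$, to be chosen at the end, and set
\[
r_j = R(1+2^{-j}),\qquad k_j = k(1-2^{-j}),\qquad B_j = B_{r_j}(x_0),
\]
together with intermediate radii $\tilde r_j=(r_j+r_{j+1})/2$. The quantities to iterate are
\[
Y_j \coloneqq \dashint_{B_j}\Big(\frac{u_{k_jR}}{R}\Big)^{p(x)}dx,\qquad a_j\coloneqq \frac{|A(k_jR,r_j)|}{|B_j|},
\]
where $A(\lambda,r)=\{x\in B_r: u>\lambda\}$.

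For one step $j\to j+1$ we take a cutoff $\eta$ supported in $B_{\tilde r_j}$ with $\eta\equiv 1$ on $B_{j+1}$, and apply Corollary~\ref{cor:zero_boundary_value} (for balls, Remark~\ref{rem:smallness_of_norms}) on $\tilde B_j$ to $w=\eta\, u_{k_{j+1}R}\in W^{1,1}_0$, with $s_1=s$ and $s_2=1$. This gives
\[
\Big(\dashint_{\tilde B_j}\Big(\frac{w}{R}\Big)^{sp(x)}\Big)^{1/s}\lesssim \dashint_{\tilde B_j}|\nabla w|^{p(x)}+|B|^\epsilon\, a_{j+1}^{1+1/n}.
\]
The norm hypothesis $\norm{\nabla w}_{\px}\lesssim 1$ follows from $\int_{2B}|\nabla u|^{p}\le1$ combined with the Caccioppoli inequality of Lemma~\ref{lem:Caccio_px}; the latter bounds the gradient term by $2^{jp^+}Y_j$.

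Next we pass from the $sp(x)$-integral back down to $Y_{j+1}$ by Hölder, which produces the factor $a_{j+1}^{1-1/s}$. Chebyshev on the level gap $k_{j+1}-k_j=k2^{-j-1}$ gives
\[
a_{j+1}\lesssim 2^{jp^+}k^{-p^-}Y_j \qquad (k\ge1).
\]
Combining, and using $|B|^\epsilon\le 1$, we obtain
\[
Y_{j+1}\lesssim b^j\big(Y_j+a_{j+1}^{1+1/n}\big)\,a_{j+1}^{1-1/s}.
\]
A tempting shortcut is $a^{1+1/n}\le a$, but that bound is too weak to produce superlinear growth. Instead we substitute the Chebyshev bound for $a_{j+1}$ in every appearance. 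This yields
\[
Y_{j+1}\lesssim b^j\big(k^{-p^-(1-1/s)}Y_j^{2-1/s}+k^{-p^-(2+1/n-1/s)}Y_j^{2+1/n-1/s}\big).
\]
Both exponents exceed $1$, and for $k\ge1$ the powers of $k$ are favourable. The point of the sharpened error term in Lemma~\ref{lem:stop} is exactly this: without the factor $(|B\cap\{|u|>0\}|/|B|)^{1+1/n}$, the error $|B|^\epsilon$ would not vanish as $Y_j\to0$ and the iteration would stall.

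The standard fast-geometric-convergence lemma then gives $Y_j\to0$, hence $u\le kR$ on $B_R$, provided
\[
k^{-p^-(1-1/s)}Y_0^{1-1/s}\le c^{-1}.
\]
To obtain the factor $(|A(0,2R)|/R^n)^\beta$, we keep the first step separate: we write $Y_1\lesssim Y_0\, a_0^{1-1/s}$ with $a_0=|A(0,2R)|/|2B|$, which produces $\beta=1-1/s$ (or a related exponent). Choosing
\[
k^{p^-}\eqsim a_0^{\beta}Y_0+1
\]
then gives the claim, since $Y_0=\dashint_{2B}(u_+/R)^{p(x)}$.

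The main obstacle is the bookkeeping of the nonhomogeneous powers $p(x)$ under the scaling by $k$: homogeneity in $k$ is only available between $k^{p^-}$ and $k^{p^+}$. That is why we insist on $k\ge1$ and why the final exponent is $1/p^-$. A second delicate point is to verify the smallness hypothesis of Corollary~\ref{cor:zero_boundary_value} uniformly in $j$. We handle it through Remark~\ref{rem:smallness_of_norms}, with $L$ absorbing the factors $2^{jp^+}$; if that fails, we first normalize by dividing $w$ by $b^{j}$ and exploit the homogeneity bounds again.
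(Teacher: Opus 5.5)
Your argument is correct. It uses the same ingredients as the paper's proof: H\"older producing the factor $a^{1-1/s}$, Corollary~\ref{cor:zero_boundary_value} with $s_1=s$, $s_2=1$ on the intermediate balls, Caccioppoli, and Chebyshev on the level gaps with $k\ge 1$. What differs is how the iteration is organized.

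The paper never splits off a first step. Its one-step estimate $Y_{i+1}\lesssim 2^{3ip^+}a_i^{1-1/s}Y_i$ is only linear in $Y_i$. It gains superlinearity by multiplying with $a_{i+1}^{\beta}\lesssim (2^{ip^+}d^{-p^-}Y_i)^{\beta}$ and iterating the product $W_i=a_i^{\beta}Y_i$, where $\beta(1+\beta)=1-\frac1s$. The factor $a_0^{\beta}$ then enters only through the smallness condition on $W_0$.

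You instead substitute Chebyshev into the H\"older factor itself. This gives the classical recursion $Z_{j+1}\lesssim b^{j}\bigl(Z_j^{2-1/s}+Z_j^{2+1/n-1/s}\bigr)$ for $Z_j=k^{-p^-}Y_j$; the second power is harmless as long as $Z_j\le 1$. You then harvest the measure factor once, from $Y_1\lesssim a_0^{1-1/s}Y_0$. That bound is available: estimate the H\"older factor by $a_0$, and the corollary's error term by Chebyshev, which gives $\lesssim k^{-p^-}Y_0\le Y_0$. Your bookkeeping is the more standard De Giorgi scheme and yields the slightly better exponent $\beta=1-\frac1s$. The paper's $W_i$ device avoids treating the first step separately, at the price of the smaller root of $\beta(1+\beta)=1-\frac1s$.

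Three points should be tightened.

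\textbf{Normalization.} Letting $L$ in Remark~\ref{rem:smallness_of_norms} absorb $2^{jp^+}$ is not legitimate. $L$ must be fixed, because the implicit constant depends on $L$. Your fallback is the correct fix, and it is exactly what the paper does: apply the corollary to $2^{-j}\eta\,u_{k_{j+1}R}$, for which $2^{-j}\abs{\nabla\eta}\lesssim 1/R$, and pay a factor $2^{jp^+}$ on the left-hand side.

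\textbf{Poincar\'e step.} The uniform modular bound for the normalized function does not follow from $\int_{2B}\abs{\nabla u}^{p(x)}\,dx\le 1$ and Caccioppoli alone. Its zero-order part is controlled by $\int_{2B}(u_+/R)^{p(x)}\,dx$, which is not a priori bounded. You need Theorem~\ref{thm:improved_poincare} with $s_1=s_2=1$ (recall $\langle u\rangle_{2B}=0$ after your normalization) to get that this integral is $\lesssim \int_{2B}\abs{\nabla u}^{p(x)}\,dx+\abs{B}^{1+\epsilon}\lesssim 1$.

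\textbf{The shortcut remark.} Your claim that the shortcut $a^{1+1/n}\le a$ is too weak to give superlinear growth is mistaken, though harmless. The paper uses exactly this shortcut, since $(Y_j+a_{j+1})\,a_{j+1}^{1-1/s}$ combined with Chebyshev already gives the power $Y_j^{2-1/s}$. What the improved error term of Lemma~\ref{lem:stop} really provides is that the error is controlled by the measure ratio, which tends to zero with $Y_j$, rather than by $\abs{B}^{\epsilon}$.
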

 \begin{proof}
Let $R$ be the radius of $B$. Let us define $R_i := R(1+2^{-i})$, $\tilde{R}_i := (R_i+R_{i+1})/2$, and $k_i := d R(1-2^{-i})$, along with the balls $B_i := B_{R_i}$ and $\tilde{B}_i := B_{\tilde{R}_i}$ for $i = 0,1,2,\dots$, centered at $x_0$, and set $v_i := (u-\mean{u}_{2B})_{k_i},$ where $d \geq 1$ will be chosen later. Note that
	\begin{align*}
		R_i-R_{i+1} = \frac{R}{2^{i+1}}, \qquad k_{i+1}-k_i = \frac{d\,R}{2^{i+1}}.
	\end{align*}
	Let $\psi_{i+1} \in C_c^\infty({\tilde{B_i}})$ be a standard cutoff such that $\abs{\nabla \psi_{i+1}}\leq \frac{4}{R_i-R_{i+1}}$ and  $\psi_{i+1}=1$ on $B_{i+1}.$
	By the H\"older and the Sobolev--\Poincare inequalities with $A(k_i,R_j):=\{x\in B_j:\ v_i>0\}$, we obtain
	\begin{align*}
		&\dashint\nolimits_{B_{i+1}} \left|\frac{v_{i+1}}{R_{i+1}}\right|^{p(x)}\,dx \lesssim \left(\frac{|A(k_{i+1},R_{i+1})|}{R^n}\right)^{1-\frac{1}{s}}\left(\dashint\nolimits_{B_{i+1}}\left|\frac{ v_{i+1}}{R_{i+1}}\right|^{s p(x)}\,dx\right)^{\frac{1}{s}}.
  \end{align*}
Now, using Corollary \ref{cor:zero_boundary_value}, we get:
  \begin{align*}
    \lefteqn{\left(\dashint\nolimits_{B_{i+1}}\left|\frac{ v_{i+1}}{R_{i+1}}\right|^{s p(x)}\,dx\right)^{\frac{1}{s}}} \qquad &
    \\
    &\lesssim 2^{i p^+} \left(\dashint\nolimits_{\tilde{B}_{i}}\left|\frac{2^{-i}\psi_{i+1} v_{i+1}}{R_{i+1}}\right|^{s p(x)}\,dx\right)^{\frac{1}{s}} \\
		&\lesssim 2^{i p^+} \left(\dashint\nolimits_{\tilde{B}_{i}} |\nabla (2^{-i}\psi_{i+1} v_{i+1})|^{p(x)}\,dx + \frac{\bigl|\{x\in B_i:\ v_{i+1}> 0\}\bigr|}{R^n}\right)\\
		&\lesssim 2^{i p^+}\left(\dashint\nolimits_{\tilde{B}_i} \left(2^{-i} v_{i+1}|\nabla\psi_{i+1}|\right)^{p(x)} dx + \dashint\nolimits_{\tilde{B}_{i}} |2^{-i}\nabla v_{i+1}|^{p(x)} dx + \frac{|A(k_{i+1}, R_i)|}{R^n}\right) \\
		&\lesssim 2^{2ip^+} \left(\dashint\nolimits_{B_i} \left(\frac{v_{i+1}}{R_i}\right)^{p(x)} dx + \frac{|A(k_{i+1}, R_i)|}{R^n}\right)\\
		&\lesssim 2^{2ip^+} \left(\dashint\nolimits_{B_i} \left(\frac{v_i}{R_i}\right)^{p(x)} dx + \frac{|A(k_{i+1}, R_i)|}{R^n}\right).
	\end{align*}

Applying Corollary \ref{cor:zero_boundary_value} requires the bound $\int_{\widetilde B_i} |\nabla (2^{-i} \psi_{i+1} v_{i+1})|^{p(x)} \, dx \lesssim 1$ due to Remark \ref{rem:smallness_of_norms}, which we justify below using Theorem~\ref{thm:improved_poincare} (with $s_1=s_2=1$).

	 \begin{align*}
	 	\lefteqn{\int\nolimits_{\widetilde B_i} |\nabla (2^{-i} \psi_{i+1} v_{i+1})|^{p(x)} \, dx}\qquad &
	 	\\
	 	&\lesssim\int\nolimits_{\widetilde B_i} \left( 2^{-i} v_{i+1} |\nabla \psi_{i+1}| \right)^{p(x)} dx + \int_{\widetilde B_i} |\nabla v_{i+1}|^{p(x)} \, dx \\
	 	&\lesssim \int\nolimits_{2B} \left| \frac{u - \mean{u}_{2B}}{R} \right|^{p(x)} dx + \int\nolimits_{2B} |\nabla u|^{p(x)} \, dx \\
	 	&\lesssim \int\nolimits_{2B} |\nabla u|^{p(x)} \, dx + |B|^\epsilon \\
	 	&\lesssim 1.
	 \end{align*}
	 	 
On the other hand, we have that
\begin{align*}
	\frac{|A(k_{i+1}, R_i)|}{R^n} &\lesssim  \frac{1}{R^n} \int\nolimits_{A(k_{i+1}, R_i)} \left( \frac{R_i}{k_{i+1}-k_i} \right)^{p(x)} \left| \frac{v_i}{R_i} \right|^{p(x)} dx \\
	&\lesssim\,\frac{2^{i p^+}}{d^{p^-}} \dashint\nolimits_{B_i} \left| \frac{v_i}{R_i} \right|^{p(x)} dx.
\end{align*}
Substituting this bound for the measure term in our previous estimate, for $k_i < k_{i+1}$ and considering that $\frac{|A(k_{i+1}, R_{i+1})|}{R^n} \leq \frac{|A(k_{i}, R_i)|}{R^n}$, we obtain
\begin{align*}
	\lefteqn{\dashint\nolimits_{B_{i+1}}\left| \frac{v_{i+1}}{R_{i+1}} \right|^{p(x)} dx} \qquad &
	 \\
	&\leq 2^{2ip^+} \left(\frac{|A(k_i, R_i)|}{R^n}\right)^{1-\frac{1}{s}} \left(\dashint\nolimits_{B_i} \left| \frac{v_i}{R_i} \right|^{p(x)} dx +  \, \frac{2^{i p^+}}{d^{p^-}} \dashint\nolimits_{B_i} \left| \frac{v_i}{R_i} \right|^{p(x)} dx \right).
\end{align*}
Since 
\begin{align*}
 \left( \frac{|A(k_{i+1}, R_{i+1})|}{R^n} \right)^\beta\leq\left( c \, \frac{2^{i p^+}}{d^{p^-}} \dashint\nolimits_{B_i} \left| \frac{v_i}{R_i} \right|^{p(x)} dx \right)^\beta,
\end{align*}  
we obtain
\begin{align*}
	\lefteqn{\left( \frac{|A(k_{i+1}, R_{i+1})|}{R^n} \right)^\beta\dashint\nolimits_{B_{i+1}} \left| \frac{v_{i+1}}{R_{i+1}} \right|^{p(x)} dx} \qquad&
	\\
	&\leq c \,2^{3ip^+}\left( 1 + \frac{1}{d^{p^-}} \right) \left( \frac{|A(k_i, R_i)|}{R^n} \right)^{\beta(1+\beta)} \left( c \, \frac{2^{i p^+}}{d^{p^-}} \right)^\beta \left( \dashint\nolimits_{B_i} \left| \frac{v_i}{R_i} \right|^{p(x)} dx \right)^{1+\beta}.
\end{align*}
Here $\beta \in (0,1)$ is chosen such that $\beta(1+\beta) = 1-\frac{1}{s}.$

If we set 
$$W_i := \left( \frac{|A(k_i, R_i)|}{R^n} \right)^\beta \dashint_{B_i} \left| \frac{v_i}{R_i} \right|^{p(x)} \, dx,$$
then we obtain 
$$W_{i+1} \leq c \frac{2^{5p^+ i}}{d^{p^- \beta}} \left( 1 + \frac{1}{d^{p^-}} \right) W_i^{1+\beta}.$$
Since $d \geq 1$, this simplifies to
$$W_{i+1} \leq 2c d^{-p^- \beta} 2^{5p^+ i} W_i^{1+\beta}.$$
Due to the algebraic lemma \cite[Lemma 7.1]{GIUS}, $W_i \to 0$ as $i \to \infty$ provided
\begin{align}\label{eq:smallness_zero}
	W_0 \leq \left(2c d^{-p^- \beta}\right)^{-1/\beta} \left(2^{5p^+}\right)^{-1/\beta^2}.
\end{align}
Defining $K_0 := (2c)^{-1/\beta} 2^{-5p^+/\beta^2}$ and choosing
$$d^{p^-} = 1 + K_0^{-1} W_0$$
ensures that \eqref{eq:smallness_zero} holds. This yields $(u-\mean{u}_{2B}-dR)_+=0$ in $B_R$, which in turn implies
$$ \sup_{B_R} \, (u - \mean{u}_{2B})_{+} \leq d R \leq C R\left[ \left( \frac{|A(0, 2R)|}{R^n} \right)^\beta \dashint_{B_{2R}} \left| \frac{v_0}{2R} \right|^{p(x)} \, dx + 1 \right]^{1/p^-}. $$
\end{proof}

\begin{proof}[Proof of Theorem~\ref{thm:local_bound}]
	In view of Remark \ref{rmk:neg_trunc}, the same proof as in Lemma \ref{lem:local_bound_future} with $u$ replaced by $-u$ (where the set $A(0,2R)$ is instead defined as $\{x\in 2B:\ \mean{u}_{2B}-u(x)>0\}$) yields a similar upper bound for $(\mean{u}_{2B}-u)_{+}$, completing the desired $L^\infty$ estimate \eqref{eq:loc_bound_formula}, which proves Theorem~\ref{thm:local_bound}.
\end{proof}

We conclude with the following open problem.
\begin{question*}
	Let $\varphi$ be a generalized Young function \cite{DHHR} with uniform Simonenko indices $1 < p^- \leq p^+ < \infty$, i.e.
  \begin{align*}
    p^- \leq \frac{\phi'(x,t)\,t}{\phi(x,t)} \leq p^+ \qquad \text{for all $x \in \RRn$ and $t \geq 0$.}
  \end{align*}
  Suppose the Hardy--Littlewood maximal operator~$M$ is norm bounded on $L^\varphi(\RRn)$, i.e. $\norm{Mf}_\phi \lesssim \norm{f}_\phi$. Consider a (local) minimizer $u$ of the energy
	\begin{align*}
		\mathcal{J}(v) &:= \int_\Omega \varphi(x,\abs{\nabla v})\,dx.
	\end{align*}
	Is the boundedness of~$M$ sufficient to guarantee the local boundedness and/or continuity of~$u$?
\end{question*}

\printbibliography

@article {Alkhutov_Harnack_Holder_px,
    AUTHOR = {Alkhutov, Yu.\ A.},
     TITLE = {The {H}arnack inequality and the {H}\"older property of
              solutions of nonlinear elliptic equations with a nonstandard
              growth condition},
   JOURNAL = {Differ. Uravn.},
  FJOURNAL = {Differentsial\cprime nye Uravneniya},
    VOLUME = {33},
      YEAR = {1997},
    NUMBER = {12},
     PAGES = {1651--1660, 1726},
      ISSN = {0374-0641},
   MRCLASS = {35J60 (35B65 35J70)},
  MRNUMBER = {1669915},
MRREVIEWER = {Ya\ Zhe\ Chen},
}

@article {Adamowicz_Toivanen_Hoelder_nonstandard,
    AUTHOR = {Adamowicz, Tomasz and Toivanen, Olli},
     TITLE = {H\"older continuity of quasiminimizers with nonstandard
              growth},
   JOURNAL = {Nonlinear Anal.},
  FJOURNAL = {Nonlinear Analysis. Theory, Methods \& Applications. An
              International Multidisciplinary Journal},
    VOLUME = {125},
      YEAR = {2015},
     PAGES = {433--456},
      ISSN = {0362-546X,1873-5215},
   MRCLASS = {49N60 (35B65 35J20 35J62)},
  MRNUMBER = {3373594},
MRREVIEWER = {Eugen\ Viszus},
       DOI = {10.1016/j.na.2015.05.023},
       URL = {https://doi.org/10.1016/j.na.2015.05.023},
}

@article {Lerner05,
    AUTHOR = {Lerner, Andrei K.},
     TITLE = {Some remarks on the {H}ardy-{L}ittlewood maximal function on
              variable {$L^p$} spaces},
   JOURNAL = {Math. Z.},
  FJOURNAL = {Mathematische Zeitschrift},
    VOLUME = {251},
      YEAR = {2005},
    NUMBER = {3},
     PAGES = {509--521},
      ISSN = {0025-5874,1432-1823},
   MRCLASS = {42B25},
  MRNUMBER = {2190341},
MRREVIEWER = {Charles\ N.\ Moore},
       DOI = {10.1007/s00209-005-0818-5},
       URL = {https://doi.org/10.1007/s00209-005-0818-5},
}

@article {Piatcoscia_Hoelder_Continuity,
    AUTHOR = {Chiad\`o{} Piat, Valeria and Coscia, Alessandra},
     TITLE = {H\"older continuity of minimizers of functionals with variable
              growth exponent},
   JOURNAL = {Manuscripta Math.},
  FJOURNAL = {Manuscripta Mathematica},
    VOLUME = {93},
      YEAR = {1997},
    NUMBER = {3},
     PAGES = {283--299},
      ISSN = {0025-2611,1432-1785},
   MRCLASS = {49N60 (49J10)},
  MRNUMBER = {1457729},
MRREVIEWER = {Elvira\ Mascolo},
       DOI = {10.1007/BF02677472},
       URL = {https://doi.org/10.1007/BF02677472},
}

@article {Nekvinda2004,
    AUTHOR = {Nekvinda, Ale\v s},
     TITLE = {Hardy-{L}ittlewood maximal operator on {$L^{p(x)}(\mathbb{R})$}},
   JOURNAL = {Math. Inequal. Appl.},
  FJOURNAL = {Mathematical Inequalities \& Applications},
    VOLUME = {7},
      YEAR = {2004},
    NUMBER = {2},
     PAGES = {255--265},
      ISSN = {1331-4343,1848-9966},
   MRCLASS = {42B25 (26D15 46E30 47B38)},
  MRNUMBER = {2057644},
MRREVIEWER = {Jos\'e\ Mar\'ia\ Martell},
       DOI = {10.7153/mia-07-28},
       URL = {https://doi.org/10.7153/mia-07-28},
}

@book {CUF,
    AUTHOR = {Cruz-Uribe, David V. and Fiorenza, Alberto},
     TITLE = {Variable {L}ebesgue spaces},
    SERIES = {Applied and Numerical Harmonic Analysis},
      NOTE = {Foundations and harmonic analysis},
 PUBLISHER = {Birk\-h\"{a}user/Springer, Heidelberg},
      YEAR = {2013},
     PAGES = {x+312},
}

@book {Grafakos2014modern,
    AUTHOR = {Grafakos, Loukas},
     TITLE = {Modern {F}ourier analysis},
    SERIES = {Graduate Texts in Mathematics},
    VOLUME = {250},
   EDITION = {Third},
 PUBLISHER = {Springer, New York},
      YEAR = {2014},
     PAGES = {xvi+624},
      ISBN = {978-1-4939-1229-2},
   MRCLASS = {42-01 (42Bxx)},
  MRNUMBER = {3243741},
MRREVIEWER = {Atanas\ G.\ Stefanov},
       DOI = {10.1007/978-1-4939-1230-8},
       URL = {https://doi.org/10.1007/978-1-4939-1230-8},
}

@article {BDS,
    AUTHOR = {Balci, Anna Kh. and Diening, Lars and Surnachev, Mikhail},
     TITLE = {New examples on {L}avrentiev gap using fractals},
   JOURNAL = {Calc. Var. Partial Differential Equations},
  FJOURNAL = {Calculus of Variations and Partial Differential Equations},
    VOLUME = {59},
      YEAR = {2020},
    NUMBER = {5},
     PAGES = {Paper No. 180, 34},
      ISSN = {0944-2669,1432-0835},
   MRCLASS = {35J20 (35J62 46E35)},
  MRNUMBER = {4153906},
MRREVIEWER = {Steven\ George\ Krantz},
       DOI = {10.1007/s00526-020-01818-1},
       URL = {https://doi.org/10.1007/s00526-020-01818-1},
}

@article {DieSch14,
    AUTHOR = {Diening, L. and Schwarzacher, S.},
     TITLE = {Global gradient estimates for the {$p(\cdot)$}-{L}aplacian},
   JOURNAL = {Nonlinear Anal.},
  FJOURNAL = {Nonlinear Analysis. Theory, Methods \& Applications. An
              International Multidisciplinary Journal},
    VOLUME = {106},
      YEAR = {2014},
     PAGES = {70--85},
      ISSN = {0362-546X,1873-5215},
   MRCLASS = {35J62 (35B45 35B65)},
  MRNUMBER = {3209686},
       DOI = {10.1016/j.na.2014.04.006},
       URL = {https://doi.org/10.1016/j.na.2014.04.006},
}

@article {BreDieSch15,
    AUTHOR = {Breit, Dominic and Diening, Lars and Schwarzacher, Sebastian},
     TITLE = {Finite element approximation of the {$p(\cdot)$}-{L}aplacian},
   JOURNAL = {SIAM J. Numer. Anal.},
  FJOURNAL = {SIAM Journal on Numerical Analysis},
    VOLUME = {53},
      YEAR = {2015},
    NUMBER = {1},
     PAGES = {551--572},
      ISSN = {0036-1429,1095-7170},
   MRCLASS = {65N30 (35J60 46E30 65N15)},
  MRNUMBER = {3313830},
MRREVIEWER = {Marius\ Ghergu},
       DOI = {10.1137/130946046},
       URL = {https://doi.org/10.1137/130946046},
}

@book {DHHR,
    AUTHOR = {Diening, Lars and Harjulehto, Petteri and H\"{a}st\"{o}, Peter
              and R{\r u}\v{z}i\v{c}ka, Michael},
     TITLE = {Lebesgue and {S}obolev spaces with variable exponents},
    SERIES = {Lecture Notes in Mathematics},
    VOLUME = {2017},
 PUBLISHER = {Springer, Heidelberg},
      YEAR = {2011},
     PAGES = {x+509},
      ISBN = {978-3-642-18362-1},
   MRCLASS = {46-02 (26D10 31B15 35J60 35Q35 46E30 46E35 46N20)},
  MRNUMBER = {2790542},
MRREVIEWER = {Dorothee\ D.\ Haroske},
       DOI = {10.1007/978-3-642-18363-8},
       URL = {https://doi.org/10.1007/978-3-642-18363-8},
}

@article {HastoOk2022,
    AUTHOR = {H\"ast\"o, Peter and Ok, Jihoon},
     TITLE = {Maximal regularity for local minimizers of non-autonomous
              functionals},
   JOURNAL = {J. Eur. Math. Soc. (JEMS)},
  FJOURNAL = {Journal of the European Mathematical Society (JEMS)},
    VOLUME = {24},
      YEAR = {2022},
    NUMBER = {4},
     PAGES = {1285--1334},
      ISSN = {1435-9855,1435-9863},
   MRCLASS = {49N60 (35A15 35J62 46E30 49J10)},
  MRNUMBER = {4397041},
MRREVIEWER = {Antonia\ Passarelli di Napoli},
       DOI = {10.4171/JEMS/1118},
       URL = {https://doi.org/10.4171/JEMS/1118},
}

@article {Spanne65,
    AUTHOR = {Spanne, Sven},
     TITLE = {Some function spaces defined using the mean oscillation over
              cubes},
   JOURNAL = {Ann. Scuola Norm. Sup. Pisa Cl. Sci. (3)},
  FJOURNAL = {Annali della Scuola Normale Superiore di Pisa. Classe di
              Scienze. Serie III},
    VOLUME = {19},
      YEAR = {1965},
     PAGES = {593--608},
      ISSN = {0391-173X},
   MRCLASS = {46.38},
  MRNUMBER = {190729},
MRREVIEWER = {L.\ C.\ Young},
}

@article {AcerbiMingione2005,
    AUTHOR = {Acerbi, Emilio and Mingione, Giuseppe},
     TITLE = {Gradient estimates for the {$p(x)$}-{L}aplacean system},
   JOURNAL = {J. Reine Angew. Math.},
  FJOURNAL = {Journal f\"{u}r die Reine und Angewandte Mathematik. [Crelle's
              Journal]},
    VOLUME = {584},
      YEAR = {2005},
     PAGES = {117--148},
      ISSN = {0075-4102,1435-5345},
   MRCLASS = {35J60 (35B45 35B65)},
  MRNUMBER = {2155087},
MRREVIEWER = {Eugen\ Viszus},
       DOI = {10.1515/crll.2005.2005.584.117},
       URL = {https://doi.org/10.1515/crll.2005.2005.584.117},
}

@article {Kopaliani,
    AUTHOR = {Kopaliani, Tengizi S.},
     TITLE = {Infimal convolution and {M}uckenhoupt {$A_{p(\cdot)}$}
              condition in variable {$L^p$} spaces},
   JOURNAL = {Arch. Math. (Basel)},
  FJOURNAL = {Archiv der Mathematik},
    VOLUME = {89},
      YEAR = {2007},
    NUMBER = {2},
     PAGES = {185--192},
      ISSN = {0003-889X,1420-8938},
   MRCLASS = {42B20 (42B25 46E30)},
  MRNUMBER = {2341730},
MRREVIEWER = {Paul\ Alton\ Hagelstein},
       DOI = {10.1007/s00013-007-2035-4},
       URL = {https://doi.org/10.1007/s00013-007-2035-4},
}

@article {Lerner2010questions,
    AUTHOR = {Lerner, Andrei K.},
     TITLE = {On some questions related to the maximal operator on variable
              {$L^p$} spaces},
   JOURNAL = {Trans. Amer. Math. Soc.},
  FJOURNAL = {Transactions of the American Mathematical Society},
    VOLUME = {362},
      YEAR = {2010},
    NUMBER = {8},
     PAGES = {4229--4242},
      ISSN = {0002-9947,1088-6850},
   MRCLASS = {42B25 (46E30)},
  MRNUMBER = {2608404},
MRREVIEWER = {Maria\ J.\ Carro},
       DOI = {10.1090/S0002-9947-10-05066-X},
       URL = {https://doi.org/10.1090/S0002-9947-10-05066-X},
}

@book {Maly,
    AUTHOR = {Mal\'y, Jan and Ziemer, William P.},
     TITLE = {Fine regularity of solutions of elliptic partial differential
              equations},
    SERIES = {Mathematical Surveys and Monographs},
    VOLUME = {51},
 PUBLISHER = {American Mathematical Society, Providence, RI},
      YEAR = {1997},
     PAGES = {xiv+291},
      ISBN = {0-8218-0335-2},
   MRCLASS = {35J65 (31C45 35B65 35J70 49N60)},
  MRNUMBER = {1461542},
MRREVIEWER = {Tero\ Kilpel\"ainen},
       DOI = {10.1090/surv/051},
       URL = {https://doi.org/10.1090/surv/051},
}

@misc{adamadzedieningkopalianiok,
      title={Double phase meets Muckenhoupt}, 
      author={Daviti Adamadze and Lars Diening and Tengiz Kopaliani and Jihoon Ok},
      year={2026},
      eprint={2601.20736},
      archivePrefix={arXiv},
      url={https://arxiv.org/abs/2601.20736}, 
}

@book {GIUS,
    AUTHOR = {Giusti, Enrico},
     TITLE = {Direct methods in the calculus of variations},
 PUBLISHER = {World Scientific Publishing Co., Inc., River Edge, NJ},
      YEAR = {2003},
     PAGES = {viii+403},
      ISBN = {981-238-043-4},
   MRCLASS = {49-02 (35J50 49J10 49K10 49N60)},
  MRNUMBER = {1962933},
MRREVIEWER = {Giovanni\ Alberti},
       DOI = {10.1142/9789812795557},
       URL = {https://doi.org/10.1142/9789812795557},
}

@article {AdamadzeDieningKopaliani2026,
    AUTHOR = {Adamadze, Daviti and Diening, Lars and Kopaliani, Tengiz},
     TITLE = {Maximal operator on variable exponent spaces},
   JOURNAL = {J. Math. Anal. Appl.},
  FJOURNAL = {Journal of Mathematical Analysis and Applications},
    VOLUME = {554},
      YEAR = {2026},
    NUMBER = {2},
     PAGES = {Paper No. 129972, 21},
      ISSN = {0022-247X,1096-0813},
   MRCLASS = {42B25 (42B35 46E30)},
  MRNUMBER = {4951202},
       DOI = {10.1016/j.jmaa.2025.129972},
       URL = {https://doi.org/10.1016/j.jmaa.2025.129972},
}

@article {AcMi1,
    AUTHOR = {Acerbi, Emilio and Mingione, Giuseppe},
     TITLE = {Regularity results for a class of functionals with
              non-standard growth},
   JOURNAL = {Arch. Ration. Mech. Anal.},
  FJOURNAL = {Archive for Rational Mechanics and Analysis},
    VOLUME = {156},
      YEAR = {2001},
    NUMBER = {2},
     PAGES = {121--140},
      ISSN = {0003-9527,1432-0673},
   MRCLASS = {49N60 (49J10)},
  MRNUMBER = {1814973},
MRREVIEWER = {Michele\ Carriero},
       DOI = {10.1007/s002050100117},
       URL = {https://doi.org/10.1007/s002050100117},
}

@article {Dieningettwein,
    AUTHOR = {Diening, Lars and Ettwein, Frank},
     TITLE = {Fractional estimates for non-differentiable elliptic systems
              with general growth},
   JOURNAL = {Forum Math.},
  FJOURNAL = {Forum Mathematicum},
    VOLUME = {20},
      YEAR = {2008},
    NUMBER = {3},
     PAGES = {523--556},
      ISSN = {0933-7741,1435-5337},
   MRCLASS = {35J55 (35D10 35J60)},
  MRNUMBER = {2418205},
MRREVIEWER = {Eugen\ Viszus},
       DOI = {10.1515/FORUM.2008.027},
       URL = {https://doi.org/10.1515/FORUM.2008.027},
}

@article {Zhik5,
    AUTHOR = {Zhikov, Vasili\u i\ V.},
     TITLE = {On {L}avrentiev's phenomenon},
   JOURNAL = {Russian J. Math. Phys.},
  FJOURNAL = {Russian Journal of Mathematical Physics},
    VOLUME = {3},
      YEAR = {1995},
    NUMBER = {2},
     PAGES = {249--269},
      ISSN = {1061-9208},
   MRCLASS = {49J45 (49J10)},
  MRNUMBER = {1350506},
MRREVIEWER = {Philip\ D.\ Loewen},
}

@article {Fanzhao_De_Giorgi_classes,
    AUTHOR = {Fan, Xianling and Zhao, Dun},
     TITLE = {A class of {D}e {G}iorgi type and {H}\"older continuity},
   JOURNAL = {Nonlinear Anal.},
  FJOURNAL = {Nonlinear Analysis. Theory, Methods \& Applications. An
              International Multidisciplinary Journal},
    VOLUME = {36},
      YEAR = {1999},
    NUMBER = {3},
     PAGES = {295--318},
      ISSN = {0362-546X,1873-5215},
   MRCLASS = {49N60 (49J10)},
  MRNUMBER = {1688232},
MRREVIEWER = {Martin\ Fuchs},
       DOI = {10.1016/S0362-546X(97)00628-7},
       URL = {https://doi.org/10.1016/S0362-546X(97)00628-7},
}

@article {ELM,
    AUTHOR = {Esposito, Luca and Leonetti, Francesco and Mingione, Giuseppe},
     TITLE = {Sharp regularity for functionals with {$(p,q)$} growth},
   JOURNAL = {J. Differential Equations},
  FJOURNAL = {Journal of Differential Equations},
    VOLUME = {204},
      YEAR = {2004},
    NUMBER = {1},
     PAGES = {5--55},
      ISSN = {0022-0396,1090-2732},
   MRCLASS = {49J10 (49N60)},
  MRNUMBER = {2076158},
MRREVIEWER = {Delfim\ F. M. Torres},
       DOI = {10.1016/j.jde.2003.11.007},
       URL = {https://doi.org/10.1016/j.jde.2003.11.007},
}

@article {Diening2005,
    AUTHOR = {Diening, Lars},
     TITLE = {Maximal function on {M}usielak-{O}rlicz spaces and generalized
              {L}ebesgue spaces},
   JOURNAL = {Bull. Sci. Math.},
  FJOURNAL = {Bulletin des Sciences Math\'ematiques},
    VOLUME = {129},
      YEAR = {2005},
    NUMBER = {8},
     PAGES = {657--700},
      ISSN = {0007-4497,1952-4773},
   MRCLASS = {46E30 (42B20 42B25 47B38)},
  MRNUMBER = {2166733},
MRREVIEWER = {Ale\v s\ Nekvinda},
       DOI = {10.1016/j.bulsci.2003.10.003},
       URL = {https://doi.org/10.1016/j.bulsci.2003.10.003},
}

@article {Fanzhao_discont_exponent,
    AUTHOR = {Fan, Xianling and Zhao, Dun},
     TITLE = {Regularity of quasi-minimizers of integral functionals with
              discontinuous {$p(x)$}-growth conditions},
   JOURNAL = {Nonlinear Anal.},
  FJOURNAL = {Nonlinear Analysis. Theory, Methods \& Applications. An
              International Multidisciplinary Journal},
    VOLUME = {65},
      YEAR = {2006},
    NUMBER = {8},
     PAGES = {1521--1531},
      ISSN = {0362-546X,1873-5215},
   MRCLASS = {49N60 (49J10)},
  MRNUMBER = {2246352},
MRREVIEWER = {Antonia\ Passarelli di Napoli},
       DOI = {10.1016/j.na.2005.10.027},
       URL = {https://doi.org/10.1016/j.na.2005.10.027},
}

@article {Mar,
    AUTHOR = {Marcellini, Paolo},
     TITLE = {Regularity and existence of solutions of elliptic equations
              with {$p,q$}-growth conditions},
   JOURNAL = {J. Differential Equations},
  FJOURNAL = {Journal of Differential Equations},
    VOLUME = {90},
      YEAR = {1991},
    NUMBER = {1},
     PAGES = {1--30},
      ISSN = {0022-0396,1090-2732},
   MRCLASS = {35J15 (35D10)},
  MRNUMBER = {1094446},
MRREVIEWER = {Philip\ W.\ Schaefer},
       DOI = {10.1016/0022-0396(91)90158-6},
       URL = {https://doi.org/10.1016/0022-0396(91)90158-6},
}

@book {HastoHarjulehto2019,
    AUTHOR = {Harjulehto, Petteri and H\"ast\"o, Peter},
     TITLE = {Orlicz spaces and generalized {O}rlicz spaces},
    SERIES = {Lecture Notes in Mathematics},
    VOLUME = {2236},
 PUBLISHER = {Springer, Cham},
      YEAR = {2019},
     PAGES = {x+167},
      ISBN = {978-3-030-15099-0},
   MRCLASS = {46-02 (42B35 46E35)},
  MRNUMBER = {3931352},
MRREVIEWER = {Karol\ Le\'snik},
       DOI = {10.1007/978-3-030-15100-3},
       URL = {https://doi.org/10.1007/978-3-030-15100-3},
}

@incollection {BalciDieningSurnachev2025,
    AUTHOR = {Balci, Anna and Diening, Lars and Surnachev, Mikhail},
     TITLE = {Scalar minimizers with maximal singular sets and lack of
              {M}eyers property},
 BOOKTITLE = {Friends in partial differential equations---the {N}ina {N}.
              {U}raltseva 90th anniversary volume},
     PAGES = {1--43},
 PUBLISHER = {EMS Press, Berlin},
      YEAR = {2025},
      ISBN = {978-3-98547-094-5},
   MRCLASS = {49J10 (35B65 35G20 46E35 49N60)},
  MRNUMBER = {4967630},
}

@article {John_Nirenberg_BMO,
    AUTHOR = {John, F. and Nirenberg, L.},
     TITLE = {On functions of bounded mean oscillation},
   JOURNAL = {Comm. Pure Appl. Math.},
  FJOURNAL = {Communications on Pure and Applied Mathematics},
    VOLUME = {14},
      YEAR = {1961},
     PAGES = {415--426},
      ISSN = {0010-3640,1097-0312},
   MRCLASS = {26.00},
  MRNUMBER = {131498},
MRREVIEWER = {L.\ C.\ Young},
       DOI = {10.1002/cpa.3160140317},
       URL = {https://doi.org/10.1002/cpa.3160140317},
}

@article {hirsch_schaeffner_Growth_conditions_and_regularity,
    AUTHOR = {Hirsch, Jonas and Sch\"affner, Mathias},
     TITLE = {Growth conditions and regularity, an optimal local boundedness
              result},
   JOURNAL = {Commun. Contemp. Math.},
  FJOURNAL = {Communications in Contemporary Mathematics},
    VOLUME = {23},
      YEAR = {2021},
    NUMBER = {3},
     PAGES = {Paper No. 2050029, 17},
      ISSN = {0219-1997,1793-6683},
   MRCLASS = {49N60 (35J60 49J10)},
  MRNUMBER = {4216424},
MRREVIEWER = {Teresa\ Isernia},
       DOI = {10.1142/S0219199720500297},
       URL = {https://doi.org/10.1142/S0219199720500297},
}

@article {DieningSchwarzacher2013keyestimate,
    AUTHOR = {Diening, L. and Schwarzacher, S.},
     TITLE = {On the key estimate for variable exponent spaces},
   JOURNAL = {Azerb. J. Math.},
  FJOURNAL = {Azerbaijan Journal of Mathematics},
    VOLUME = 3,
      YEAR = 2013,
    NUMBER = 2,
     PAGES = {62--69},
      ISSN = {2218-6816,2221-9501},
   MRCLASS = {42B35},
  MRNUMBER = 3084933,
MRREVIEWER = {Patrik\ Wahlberg},
}

@article {Die2,
    AUTHOR = {Diening, L.},
     TITLE = {Maximal function on generalized {L}ebesgue spaces
              {$L^{p(\cdot)}$}},
   JOURNAL = {Math. Inequal. Appl.},
  FJOURNAL = {Mathematical Inequalities \& Applications},
    VOLUME = {7},
      YEAR = {2004},
    NUMBER = {2},
     PAGES = {245--253},
      ISSN = {1331-4343,1848-9966},
   MRCLASS = {42B25 (46E30)},
  MRNUMBER = {2057643},
MRREVIEWER = {Lubo\v s\ Pick},
       DOI = {10.7153/mia-07-27},
       URL = {https://doi.org/10.7153/mia-07-27},
}

@article {FabesKenigSerapioni1982,
    AUTHOR = {Fabes, Eugene B. and Kenig, Carlos E. and Serapioni, Raul P.},
     TITLE = {The local regularity of solutions of degenerate elliptic
              equations},
   JOURNAL = {Comm. Partial Differential Equations},
  FJOURNAL = {Communications in Partial Differential Equations},
    VOLUME = {7},
      YEAR = {1982},
    NUMBER = {1},
     PAGES = {77--116},
      ISSN = {0360-5302,1532-4133},
   MRCLASS = {35J70 (35D10)},
  MRNUMBER = {643158},
MRREVIEWER = {M.-T. Lacroix},
       DOI = {10.1080/03605308208820218},
       URL = {https://doi.org/10.1080/03605308208820218},
}

@article {CruzUribeMoenNaibo2013,
    AUTHOR = {Cruz-Uribe, David and Moen, Kabe and Naibo, Virginia},
     TITLE = {Regularity of solutions to degenerate {$p$}-{L}aplacian
              equations},
   JOURNAL = {J. Math. Anal. Appl.},
  FJOURNAL = {Journal of Mathematical Analysis and Applications},
    VOLUME = {401},
      YEAR = {2013},
    NUMBER = {1},
     PAGES = {458--478},
      ISSN = {0022-247X,1096-0813},
   MRCLASS = {35J70 (35B65 35D30 35H10 35J62 35R03)},
  MRNUMBER = {3011287},
MRREVIEWER = {Francesco\ Della Pietra},
       DOI = {10.1016/j.jmaa.2012.12.023},
       URL = {https://doi.org/10.1016/j.jmaa.2012.12.023},
}

\end{document}